\theoremstyle{plain}
\newtheorem{theorem}{Theorem}[section]
\newtheorem{lemma}[theorem]{Lemma}
\newtheorem{proposition}[theorem]{Proposition}
\theoremstyle{definition}
\newtheorem{definition}[theorem]{Definition}
\newtheorem{remark}[theorem]{Remark}
\newtheorem{example}[theorem]{Example}
\newcommand{\Z}{\mathbb{Z}}
\newcommand{\Q}{\mathbb{Q}}
\newcommand{\G}{\mathbb{G}}
\newcommand{\pd}{\partial}
\newcommand{\Char}{\operatorname{Char}}
\newcommand{\Tor}{\operatorname{Tor}}
\newcommand{\Gal}{\operatorname{Gal}}
\newcommand{\ord}{\operatorname{ord}}
\newcommand{\Br}{\operatorname{Br}}
\newcommand{\Pic}{\operatorname{Pic}}
\newcommand{\Hom}{\operatorname{Hom}}
\newcommand{\Spec}{\operatorname{Spec}}
\newcommand{\Image}{\operatorname{Im}}
\newcommand{\coker}{\operatorname{coker}}
\newcommand{\Div}{\operatorname{Div}}
\newcommand{\divi}{\operatorname{div}}
\newcommand{\Zar}{\operatorname{Zar}}
\newcommand{\cd}{\operatorname{cd}}
\DeclareMathOperator{\Nis}{Nis}
\DeclareMathOperator{\et}{et}
\DeclareMathOperator{\eff}{eff}
\DeclareMathOperator{\DM}{DM}
\DeclareMathOperator{\DMn}{\DM^{\eff,-}_{\Nis}}
\begin{document}


\title{The Brauer-Manin pairing, class field theory \\
and motivic homology}



\author{Takao Yamazaki}
\date{\today}
\address{Mathematical Institute, Tohoku University,
  Aoba, Sendai 980-8578, Japan}
\email{ytakao@math.tohoku.ac.jp}

\begin{abstract}
For a smooth proper variety over a $p$-adic field,
its Brauer group and abelian fundamental group
are related to higher Chow groups
by the Brauer-Manin pairing and class field theory.
We generalize this relation to 
smooth (possibly non-proper) varieties,
using motivic homology and
a variant of Wiesend's ideal class group.
Several examples are discussed.
\end{abstract}

\subjclass{Primary: 11S25, Secondary: 14G20, 14C25}
\keywords{Brauer group, abelian fundamental group,
Wiesend's ideal class group, motivic homology, Suslin homology}


\maketitle


\section{Introduction}
\subsection{The Brauer-Manin pairing and class field theory}\label{intro-1}
Let $p$ be a prime number, and
$k$ a finite extension of $\Q_p$.
Let $X$ be a smooth variety over $k$.
For $i \in \Z_{\geq 0}$,
we write $X_{(i)}$ for the set of
all points of $X$ of dimension $i$.
Let $\Br(X)$ be the cohomological Brauer group of $X$,
and let $\pi_1^{ab}(X)$ be the abelian etale fundamental group of $X$.
For any $x \in X_{(0)}$, local class field theory yields
canonical maps
\[ \psi_x^* : \Br(x) \cong \Q/\Z, \qquad
   \rho_x : k(x)^* \to \pi_1^{ab}(x).
\]
As for the first map, 
it is convenient for our purpose
to consider its dual.
Putting $A^* := \Hom(A, \Q/\Z)$ for an abelian group $A$,
we define $\psi_x$ to be the composition of
the dual of $\psi_x^*$ and the canonical inclusion
\[ \psi_x : \Z \hookrightarrow \hat{\Z} \cong \Br(x)^*. \]
Since both of $\Br(-)^*$ and $\pi_1^{ab}(-)$ are covariant functorial,
we get homomorphisms
\begin{equation}\label{twomaps}
 \tilde{\psi}_X : 
 Z_0(X) := \bigoplus_{x \in X_{(0)}} \Z \to \Br(X)^*,
\qquad
   \tilde{\rho}_X : Z_0^1(X) := \bigoplus_{x \in X_{(0)}} k(x)^*
\to \pi_1^{ab}(X)
\end{equation}
by taking the direct sum of the $\psi_x$'s and $\rho_x$'s.

When $X$ is proper over $k$,
Manin \cite{manin} and Bloch/Saito \cite{bloch2, saito1} observed that
$\tilde{\psi}_X$ and $\tilde{\rho}_X$ 
factor respectively through
\begin{align*}
CH_0(X) &:= \coker[\bigoplus_{y \in X_{(1)}} k(y)^* \to Z_0(X)],
\\
SK_1(X) &:= \coker[\bigoplus_{y \in X_{(1)}} K_2 k(y) \to Z_0^1(X)];
\end{align*}
the induced 
pairing 
$CH_0(X) \times \Br(X) \to \Q/\Z$
and the induced map
$SK_1(X) \to \pi_1^{ab}(X)$
are called the 
{\it Brauer-Manin pairing} and 
{\it reciprocity map of class field theory} 
respectively.
Both are studied intensively by several authors;
for the Brauer-Manin pairing, see
\cite{ps, ct-survey, ct-survey2, cts, ss, yama};
for the reciprocity map, see
\cite{sz2, sz, js, js2, sato2, yama2}.

If $X$ is not proper over $k$, however,
$\tilde{\psi}_X$ and $\tilde{\rho}_X$ 
do not factor through 
$CH_0(X)$ or $SK_1(X)$.
To this end,
we shall introduce good quotients 
of $Z_0(X)$ and $Z_0^1(X)$.

\subsection{Wiesend's tame ideal class group}\label{sect:intro-wie}
Let $V$ be a variety over a field $F$.
Take $y \in V_{(1)}$.
Let $C(y)$ be the closure of $\{ y \}$ in $V$,
$\tilde{C}(y) \twoheadrightarrow C(y)$ the normalization,
$\tilde{C}(y) \hookrightarrow \bar{C}(y)$ the smooth completion,
and $C_{\infty}(y) = \bar{C}(y) \setminus \tilde{C}(y)$.
For $x \in C_{\infty}(y)$,
we take a uniformizer $\pi_x \in F(y)^*$ at $x$.
We define for $r \in \Z_{>0}$
\[ UK_r^M F(y) := 
\ker[ K_r^M F(y) \to \bigoplus_{x \in C_{\infty}(y)} 
(K_{r-1}^M F(x) \oplus K_r^M F(x))].
\]
Here the $x$-component of the map is defined 
by $a \mapsto (\pd_x(a), \pd_x( \{ \pi_x \} \cup  a ))$
for $a \in K_r^M F(y)$,
where $\pd_x$ is the tame symbol at $x$.
This group does not depend on the choice of $\pi_x$.
If $V$ is proper over $F$,
then $UK_r^M F(y) = K^M_r F(y).$
For example, 
$UK_1^M F(y)$ is the group of rational functions on $\bar{C}(y)$
which takes value $1$ at all points of $C_{\infty}(y)$.
The following definition is a 
natural outcome of an idea of Wiesend \cite{wiesend}.

\begin{definition}\label{classgroup}
Let $V$ be a variety over a field $F$, 
and let $r \in \Z_{\geq 0}$.
We define 
{\it Wiesend's tame ideal class group} of degree $r$
to be
\[ C_r(V) := \coker[ 
\bigoplus_{y \in V_{(1)}} UK_{r+1}^M F(y) \hookrightarrow
\bigoplus_{y \in V_{(1)}} K_{r+1}^M F(y) \overset{(*)}{\to}
\bigoplus_{x \in V_{(0)}} K_{r}^M F(x)],
\]
where $(*)$ is the boundary map 
of Gersten complex of the Milnor $K$-sheaf.
(In particular, its image is in the direct sum.)
\end{definition}
By definition,
$C_0(V)$ and $C_1(V)$ are quotients of
$Z_0(V)$ and $Z_0^1(V)$ respectively.
If $V$ is proper over $F$, then 
we have $C_0(V) = CH_0(V)$ and $C_1(V) = SK_1(V)$.

\begin{remark}\label{cg-rem}
Suppose that $V=\bar{C} \setminus C_{\infty}$,
where $\bar{C}$ is a smooth projective 
geometrically irreducible curve over $F$
and $C_{\infty}$ is a closed reduced subvariety of $\bar{C}$.
Then our definition of $C_0(V)$ coincides with
that of the {\it group of classes of divisors on $\bar{C}$
prime to $C_{\infty}$ modulo $C_{\infty}$-equivalence}
defined in \cite[Chap. V, \S 2]{serre3}.
It is proved \cite[Chap. V, Thm. 1]{serre3} that
$\ker(\deg :C_0(V) \to \Z)$
is represented by a semi-abelian variety $J$,
which is called the {\it generalized Jacobian variety}
of $\bar{C}$ with modulus $C_{\infty}$.
(Here the degree map $\deg : C_0(V) \to \Z$
is induced by the usual degree map $Z_0(V) \to \Z$.)
For simplicity, we call $J$
``the generalized Jacobian of $V$''.
One can also interpret $C_0(V)$
as the Picard group of the singular curve
obtained from $\bar{C}$ by 
contracting all points of $C_{\infty}$
to one point
\cite[Chap. V, \S 2 and Chap. IV. \S 4]{serre3}.
On the other hand,
$C_0(V)$ is also isomorphic to
the relative Picard group
$\Pic(\bar{C}, C_{\infty})$
(see \cite[Theorem 3.1]{sv}).
\end{remark}

\subsection{Motivic homology}
Let $V$ be a smooth variety over a perfect field $F$,
and let $i, j \in \Z$.
Its motivic homology 
$H_i^M(V, \Z(j))$ 
is defined to be the group of homomorphisms
$\Hom_{\DMn(F)}(\Z(j)[i], M(V))$
in Voevodsky's category
(cf. \cite{fv}, \cite[(14.17)]{mvw}).
When $j=0$, the group $H_i^M(V, \Z(0))$ agrees with
{\it Suslin's algebraic singular homology} $h_i(V)$ \cite{sv},
and admits a (relatively simple)
description in terms of algebraic cycles
(cf. \cite[(3.2.7)]{voevodsky}, \cite[(14.18)]{mvw}).
There is a similar but complicated description also for $j \not= 0$
(cf. \cite[(9.4)]{fv}).
The following theorem, which plays a crucial role in our paper,
provides a simpler description in a special case.

\begin{theorem}\label{cg-h}
Let $V$ be a smooth variety over a perfect field $F$,
and let $r \in \Z_{\geq 0}$.
Then there is a canonical isomorphism
\begin{equation}\label{ch-h-eq}
C_r(V) \cong H_{-r}^M(V, \Z(-r)).
\end{equation}
\end{theorem}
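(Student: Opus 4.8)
The plan is to turn the motivic homology group into the zeroth homology of an explicit Suslin-type complex and then to compute that complex through the points of $V$ of dimension $0$ and $1$, matching the result term by term with Definition~\ref{classgroup}.

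First I would apply Voevodsky's cancellation theorem to rewrite
\[
H_{-r}^M(V,\Z(-r))
 = \Hom_{\DMn(F)}(\Z(-r)[-r],M(V))
 \cong \Hom_{\DMn(F)}(\Z,M(V)(r)[r]).
\]
Using the model $\Z(r)[r]\cong \tilde M(\G_m^{\wedge r})$ for the Tate twist, the right-hand side is the degree-zero homology of the Suslin complex of $\Z_{tr}(V)\otimes\tilde\Z_{tr}(\G_m^{\wedge r})$ evaluated over $\Spec F$. For $r=0$ this is precisely Suslin's homology $h_0(V)$, so the theorem is a $\G_m^{\wedge r}$-twisted refinement of the Suslin--Voevodsky description of $h_0$, and this will serve as the base case.

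Next I would analyse this complex through the filtration of $V$ by dimension of support, i.e.\ by the niveau spectral sequence. The identification $H_{-r}^M(\Spec L,\Z(-r))\cong K_r^M L$ --- a consequence of the Nesterenko--Suslin--Totaro isomorphism $H^r_M(\Spec F,\Z(r))\cong K_r^M F$ together with cancellation --- shows that the contribution of the dimension-zero points is $\bigoplus_{x\in V_{(0)}}K_r^M F(x)$, while the dimension-one points contribute Milnor $K$-groups in degree $r+1$, with the incoming differential given by the tame symbol $\pd_x$, that is, by the boundary map $(*)$ of the Gersten complex. Thus $H_{-r}^M(V,\Z(-r))$ is presented as the cokernel of a map into $\bigoplus_{x\in V_{(0)}}K_r^M F(x)$ whose source is built from the dimension-one points of $V$, which already has the shape of $C_r(V)$.

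The crux, and the step I expect to be the main obstacle, is to show that the exact source of these relations is $\bigoplus_{y\in V_{(1)}}UK_{r+1}^M F(y)$ and not all of $\bigoplus_y K_{r+1}^M F(y)$; this is where non-properness enters, since the two coincide only when $V$ is proper. The relations in motivic homology arise from finite correspondences out of $\mathbb{A}^1$, and for a one-dimensional point $y$ such a correspondence is controlled by a rational function on the smooth completion $\bar C(y)$ whose behaviour at the points $C_\infty(y)$ is constrained. I would reduce this to a local computation on each curve $\bar C(y)$ and invoke the (twisted) Suslin--Voevodsky theorem identifying $h_0$ of an open curve with the relative Picard group $\Pic(\bar C,C_\infty)$ of Remark~\ref{cg-rem}; the analysis at $C_\infty(y)$ shows that the admissible functions are exactly those lying in $UK_{r+1}^M F(y)$, i.e.\ those trivialised by the tame symbol against a uniformizer $\pi_x$ (value $1$ on $C_\infty(y)$ in the case $r=0$). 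Finally I would check that the comparison is independent of the choices of $\pi_x$ and of $\bar C(y)$ and is functorial, so that the local identifications glue to the canonical isomorphism \eqref{ch-h-eq}.
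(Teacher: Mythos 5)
Your overall strategy --- reduce to the Suslin-homology computation at $r=0$ via the $\G_m$-twist, identify the generators with $K_r^M$ of closed points by Nesterenko--Suslin--Totaro, and locate the relations in the behaviour of curves at $C_\infty$ --- has the right shape and is close in spirit to the paper. But the mechanism you propose for the middle step has a genuine gap. Motivic homology $H^M_{*}(V,\Z(*))=\Hom_{\DMn(F)}(\Z(*)[*],M(V))$ does \emph{not} admit a niveau spectral sequence with Gersten $E_1$-terms for non-proper $V$: the functor $M(-)$ has no localization triangle for an arbitrary closed subset (only the Gysin triangle \eqref{gysin} for smooth pairs), and this failure is exactly what the theorem is about. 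Indeed, if the dimension-one points contributed all of $K^M_{r+1}F(y)$ with differential the full Gersten boundary $(*)$, as your second paragraph asserts, the cokernel would be the proper-case group, not $C_r(V)$. You notice this tension and correct the source of relations to $UK^M_{r+1}F(y)$ in the final paragraph, but the tool you then invoke --- a ``twisted Suslin--Voevodsky theorem'' for open curves --- does not exist off the shelf; proving it for $r\geq 1$ is precisely the content of the theorem, and your sketch gives no argument for it. In particular nothing in the proposal shows that the classes coming from $UK^M_{r+1}F(y)$ actually die in $H^M_{-r}(V,\Z(-r))$ (this requires Weil reciprocity together with the norm compatibility of tame symbols), nor that they exhaust the relations.

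For comparison, the paper reduces to the $r=0$ case (Schmidt's theorem) by induction on $r$ using the decomposition $H^M_i(V\times\G_m,\Z(j))\cong H^M_{i-1}(V,\Z(j-1))\oplus H^M_{i}(V,\Z(j))$: it constructs an explicit surjection $C_r(V\times\G_m)\twoheadrightarrow C_{r+1}(V)$ purely in Milnor $K$-theory (multiplication by the $\G_m$-coordinate $\xi$ followed by the norm), verifies by Weil reciprocity and Lemma \ref{basictame} that this is compatible with the projection onto the new summand of motivic homology, and then deduces the statement for $r+1$ from the statements for $r$ applied to $V$ and $V\times\G_m$ by a diagram chase. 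If you want to salvage your plan, replace the niveau spectral sequence by this kind of explicit bookkeeping on $V\times\G_m$: the only input from the geometry of curves you then need is the untwisted $r=0$ case.
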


When $r=0$, 
using the comparison of $H_{0}^M(V, \Z(0))$
with $h_0(V)$ recalled above,
this follows from a result of
Schmidt (\cite{schmidt} Theorem 5.1).
We shall prove this theorem by reducing to the case $r=0$
in \S \ref{sect-mot}.
In what follows, we often identify
$C_r(V)$ and $H_{-r}^M(V, \Z(-r))$.

\begin{remark}\label{geisserremark}
For a smooth projective variety $V$ 
over a {\it finite} field,
unramified class field theory \cite{ks}
relates $CH_0(V)$ with
the abelian fundamental group of $V$.
This has been generalized
by Schmidt and Spiess \cite{ss2}
to smooth (possibly non-proper) variety $V$,
in which $CH_0(V)$ is replaced by
Suslin's algebraic singular homology $h_0(V)$.
(Note that $h_0(V) \cong C_0(V)$ 
by Schmidt's theorem mentioned above.
See also recent works of Geisser \cite{geisser1, geisser2}
for a further generalization.)
The basic strategy of our paper
is to follow their argument over a $p$-adic base field.
\end{remark}

\subsection{Open varieties over a $p$-adic field}
We go back to the situation of \S \ref{intro-1}.
Schmidt and Spiess constructed a map
connecting motivic homology and 
etale cohomology with compact support,
which we will recall in \S \ref{sect-et}.
As an application,
we deduce the following proposition in \S \ref{sect-var}.

\begin{proposition}\label{well-def}
Let $X$ be a smooth variety over a finite extension $k$ of $\Q_p$.
The homomorphisms \eqref{twomaps} 
induce well-defined homomorphisms
\begin{equation}\label{bmcft}
\psi_X : C_0(X) \to \Br(X)^*,
\qquad
\rho_X : C_1(X) \to \pi_1^{ab}(X).
\end{equation}
\end{proposition}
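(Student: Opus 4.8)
The plan is to show that each homomorphism in \eqref{twomaps} annihilates the image of the boundary map on the $UK$-groups, so that it descends to the defining cokernel. Since $C_0(X)$ and $C_1(X)$ are the cokernels of $\bigoplus_{y} UK^M_{r+1}k(y) \to \bigoplus_x K^M_r k(x)$ for $r=0,1$, and this boundary map is a direct sum over $y$, it suffices to fix a single $y \in X_{(1)}$ and prove vanishing on $\partial(UK^M_{r+1}k(y))$. For $\psi_X$ I would dualize: an element of $\Br(X)^*$ is trivial iff it is killed by every $\alpha \in \Br(X)$, and unwinding the definition of $\psi_x$ the required identity becomes the reciprocity
\[ \sum_{x \in \tilde C(y)_{(0)}} \ord_x(f)\,\inv_x(\alpha|_x) = 0 \qquad (f \in UK_1^M k(y),\ \alpha\in\Br(X)). \]
For $\rho_X$ I would instead pair with characters: since $\Hom(\pi_1^{ab}(X),\Q/\Z) = H^1(X,\Q/\Z)$, it suffices to show for every $\chi \in H^1(X,\Q/\Z)$ and $g \in UK_2^M k(y)$ that $\sum_x \chi(\rho_x(\partial_x g)) = 0$; by the local class field theory formula $\chi(\rho_x(a)) = \inv_x(\{a\}\cup\chi|_x)$ this reads
\[ \sum_{x \in \tilde C(y)_{(0)}} \inv_x\big(\partial_x(g)\cup\chi|_x\big) = 0. \]

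Both identities I would derive from one global input on the smooth proper curve $\bar C = \bar C(y)$ with function field $K = k(y)$: the residue reciprocity law asserting that $\sum_{x\in\bar C_{(0)}}\Cor_{k(x)/k}\circ\partial_x$ vanishes on $H^i(K,\Q/\Z(j))$, post-composed with $\inv_k$. Concretely, for $\psi_X$ I would feed in $\beta\cup\{f\} \in H^3(K,\Q/\Z(2))$ with $\beta = \alpha|_K$, and for $\rho_X$ the class $\{g\}\cup\chi|_K$ of the same degree. The residue-of-a-cup-product formula then expresses $\partial_x$ of these classes through the residue $\partial_x$ and the specialization $s^{\pi_x}_x = \partial_x(\{\pi_x\}\cup -)$ of the two factors. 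At points $x \in \tilde C(y)_{(0)}$, where $\alpha$ and $\chi$ are unramified, this recovers precisely the local terms $\ord_x(f)\,\alpha|_x$ and $\partial_x(g)\cup\chi|_x$ above. The whole point is that at the boundary $x \in C_\infty(y)$ the residue vanishes: the two constituents of the residue formula are governed exactly by $\partial_x$ and by $\partial_x(\{\pi_x\}\cup -)$, which are the two components cut out in the definition of $UK^M_{r+1}k(y)$. Thus the $UK$-condition forces the $C_\infty$-contributions to die, and the reciprocity law collapses to the desired sum over $\tilde C(y)_{(0)}$.

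To finish I would assemble the standard local compatibilities: $\inv_k\circ\Cor_{k(x)/k} = \inv_{k(x)}$ for the local fields $k(x)/k$, the identification $\psi_x^* = \inv_x$ from local class field theory, and the covariant functoriality of $\Br(-)^*$ and $\pi_1^{ab}(-)$ along the closed immersions $x \hookrightarrow X$, which is what turns the local symbols into the global pairing against $\Br(X)$ and into elements of $\pi_1^{ab}(X)$. Combined with the two collapsed reciprocity identities, this yields vanishing on $\partial(UK^M_{r+1}k(y))$ for every $y$, hence the factorizations \eqref{bmcft}.

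The main obstacle I expect is the boundary analysis at $C_\infty(y)$: one must pin down the residue-of-a-cup-product formula with its twists and signs, and verify that its two constituents match, on the nose, the maps $a \mapsto \partial_x(a)$ and $a\mapsto\partial_x(\{\pi_x\}\cup a)$ defining $UK^M_{r+1}k(y)$, independently of the choice of uniformizer $\pi_x$. A cleaner, and probably intended, route is to pass through the identification $C_r(X)\cong H^M_{-r}(X,\Z(-r))$ of Theorem~\ref{cg-h} together with the Schmidt--Spiess comparison from motivic homology to \'etale cohomology with compact support: there the Brauer--Manin pairing and the reciprocity map factor through $\Br(X)^*$ and $\pi_1^{ab}(X)$ by Poincar\'e--Lefschetz duality for $X/k$, and since that comparison is defined on all of $H^M_{-r}(X,\Z(-r))$, the factorization through $C_r(X)$ is automatic once one checks that the two constructions agree on the cycle generators of $Z_0(X)$ and $Z_0^1(X)$.
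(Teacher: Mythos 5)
Your primary argument takes a genuinely different route from the paper's, which is in fact exactly the ``cleaner route'' you sketch in your closing paragraph: the paper identifies $C_r(X)$ with $H^M_{-r}(X,\Z(-r))$ via Theorem~\ref{cg-h}, composes with the Schmidt--Spiess maps $c^{2d+r,d+r}_{X,n}$ into $\underset{\leftarrow n}{\lim}\, H^{2d+r}_{et,c}(X,\Z/n(d+r))$, identifies the target by Poincar\'e duality with $H^2_{et}(X,\Q/\Z(1))^*$ (into which $\Br(X)^*$ injects by the Kummer sequence) for $r=0$ and with $\pi_1^{ab}(X)$ for $r=1$, and checks agreement with $\tilde\psi_X$, $\tilde\rho_X$ by reduction to the case $\dim X=0$; the factorization through $C_r(X)$ is then automatic. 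Your direct route --- feeding $\alpha|_K\cup\{f\}$ and $\{g\}\cup\chi|_K$ into the residue reciprocity law on $\bar C(y)$ and killing the $C_\infty(y)$-terms via the two components of $UK^M_{r+1}$ --- is sound, and the boundary analysis you flag as the main obstacle can be done without sign bookkeeping: at $x\in C_\infty(y)$ the conditions $\pd_x(-)=0$ and $\pd_x(\{\pi_x\}\cup -)=0$ say precisely that the symbol dies under the decomposition $H^{r+1}(K_x,\Z/n(r+1))\cong H^{r+1}(k(x),\Z/n(r+1))\oplus H^{r}(k(x),\Z/n(r))$ of the cohomology of the (equicharacteristic-zero) local field at $x$, so its cup product with the possibly ramified class $\alpha$ or $\chi$ already vanishes there, residue included. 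What your route buys is independence from the motivic machinery --- only Lichtenbaum/Kato reciprocity for proper curves over $k$ and Merkurjev--Suslin are needed --- together with a transparent explanation of why the two conditions defining $UK^M_{r+1}$ are exactly the right ones. What the paper's route buys is Lemmas~\ref{candpsi} and~\ref{candrho}, identifying $\ker(\psi_X/n)$, $\coker(\psi_X/n)$, $\ker(\rho_X/n)$, $\coker(\rho_X/n)$ with kernels and cokernels of the maps $c^{i,j}_{X,n}$; these are the workhorses for everything in \S\ref{sect-cv} and \S\ref{sect-surface}, so for the paper the bare well-definedness is essentially a by-product of a construction it needs anyway.
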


In \S \ref{sect-cv}, we consider the case where $X$ is a curve.
In this case, the map $\psi_X$ was already studied by
Scheiderer-van Hamel \cite{svh}
(see Theorem \ref{thm-svh} below),
and the map $\rho_X$ is closely related to 
work of Hiranouchi \cite{hira} 
(see Theorem \ref {thm-hira} and Remark \ref{rem-hira} below).
When $X$ is of dimension two or higher,
the maps $\psi_X$ and $\rho_X$ 
are not very close to an isomorphism
even if $X$ is projective over $k$.
We study several examples of surfaces in \S \ref{sect-surface}.
As a sample, here we mention the following result.
(We write $\bar{V} = V \times_k \bar{k}$
for a variety $V$ over a field $k$ with an algebraic closure $\bar{k}$.)
\begin{theorem}\label{rational0}
Let $X$ be a smooth projective geometrically irreducible surface 
over a finite extension $k$ of $\Q_p$.
Suppose that $\bar{X}$ is {\it rational}.
Let $U$ be an open subvariety of $X$.
\begin{enumerate}
\item
Suppose that the irreducible components of
$\bar{X} \setminus \bar{U}$ generate 
the N\'eron-Severi group $NS(\bar{X})$ of $\bar{X}$.
Then,
the kernel of $\psi_U$ is the maximal divisible subgroup of $C_0(U)$.
However, there is an example of $U$ such that
$\ker(\psi_U \otimes \Z/n\Z) \not= 0$ for 
all $n \in \Z_{>0}$ divisible by some (fixed) integer $N$.
\item
The kernel of $\rho_U$ is the maximal divisible subgroup of $C_1(U)$,
and $\rho_U \otimes \Z/n\Z$ is bijective
for all $n \in \Z_{>0}$.
\end{enumerate}
\end{theorem}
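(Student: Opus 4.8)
The plan is to reduce both assertions to \'etale cohomology with finite coefficients and to exploit the very special shape of the cohomology of a rational surface. Write $\bar{Z}=\bar{X}\setminus\bar{U}$, with irreducible components $\bar{Z}_1,\dots,\bar{Z}_m$, and set $d=\dim U=2$. Using the Schmidt-Spiess realization (recalled in \S\ref{sect-et}), the identification $C_r(U)\cong H_{-r}^M(U,\Z(-r))$ of Theorem~\ref{cg-h}, and the Suslin-Voevodsky comparison $H_i^M(U,\Z/n(j))\cong H_c^{2d-i}(U,\Z/n(d-j))$, I would first identify, for each $n$, the maps $\psi_U\otimes\Z/n$ and $\rho_U\otimes\Z/n$ of Proposition~\ref{well-def} with the pairings of arithmetic Poincar\'e-Lefschetz duality over $k$. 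Since $\cd(k)=2$, local Tate duality yields perfect pairings $H_c^i(U,\Z/n(j))\times H^{6-i}(U,\Z/n(3-j))\to\Z/n$; concretely $C_1(U)/n\cong H_c^5(U,\Z/n(3))$ is paired with $H^1(U,\Z/n)$, the dual of $\pi_1^{ab}(U)/n$, whereas $C_0(U)/n\cong H_c^4(U,\Z/n(2))$ is paired with $H^2(U,\Z/n(1))$, which by the Kummer sequence records both $\Pic(U)/n$ and $\Br(U)[n]$.

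The second step is the geometric computation. As $\bar{X}$ is rational it is simply connected with $\Br(\bar{X})=0$, so $H^1(\bar{X},\Z/n)=0$, $H^3(\bar{X},\Z/n(1))=0$ and $H^2(\bar{X},\Z/n(1))\cong NS(\bar{X})/n$; every geometric cohomology group is of Tate type. Feeding this into the localization sequence for $\bar{U}\hookrightarrow\bar{X}$ with complement $\bar{Z}$, and then into the Hochschild-Serre spectral sequence $H^p(k,H^q(\bar{U},\Z/n(j)))\Rightarrow H^{p+q}(U,\Z/n(j))$, reduces everything to linear algebra over the finite Galois modules $\Z/n$, $\mu_n$, $NS(\bar{X})/n$ and the Tate-twisted permutation module $\bigoplus_i\Z/n(-1)$ attached to the $\bar{Z}_i$. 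The N\'eron-Severi hypothesis of (1) enters exactly as the statement that the Gysin map $\bigoplus_i\Z/n\to NS(\bar{X})/n=H^2(\bar{X},\Z/n(1))$ is surjective.

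For part (2) the group $H^1(\bar{U},\Z/n)$ receives nothing from $H^1(\bar{X})=0$ and is cut out inside $\bigoplus_i\Z/n(-1)$ by the cycle-class map; being a Tate twist of a permutation module, its Galois cohomology is transparent, the descent degenerates, and the duality pairing stays perfect, so $\rho_U\otimes\Z/n$ is bijective for every $n$. A standard limit argument, the target $\pi_1^{ab}(U)$ being profinite, then identifies $\ker\rho_U$ with the maximal divisible subgroup of $C_1(U)$. For part (1) the same analysis applied to $H^2(\bar{U},\Z/n(1))$ must contend with $NS(\bar{X})$: under the N\'eron-Severi hypothesis the Gysin surjectivity shows that the induced map $\bar{\psi}_U\colon C_0(U)/D_0\to\Br(U)^*$ is injective, where $D_0$ is the maximal divisible subgroup of $C_0(U)$, which is precisely the assertion that $\ker\psi_U=D_0$. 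The crucial point is that this is strictly weaker than injectivity of $\psi_U\otimes\Z/n$, since tensoring $\bar{\psi}_U$ with $\Z/n$ introduces $\Tor(\coker\bar{\psi}_U,\Z/n)$; because $NS(\bar{X})$ carries a nontrivial Galois action the contributions $H^1(k,NS(\bar{X})\otimes\mu_n)$ to this cokernel need not vanish. The counterexample is then built by choosing $X$ with $H^1(k,NS(\bar{X}))\neq0$ and an open $U$ whose boundary still generates $NS(\bar{X})$ yet for which this obstruction survives modulo every $n$ divisible by a fixed $N$.

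I expect the main obstacle to be the integral passage from the modulo-$n$ picture to the statements about maximal divisible subgroups: establishing genuine injectivity of $\bar{\psi}_U$ in (1), as opposed to the false modulo-$n$ injectivity, while simultaneously producing an explicit $U$ that realizes the persistent kernel of $\psi_U\otimes\Z/n$. A secondary difficulty is the bookkeeping of Tate twists and the behaviour at the residue characteristic $p$, where the comparison between motivic homology and \'etale cohomology must be supplemented by its syntomic counterpart; once these are under control, the rational-surface computation becomes essentially formal linear algebra over $\Gal(\bar{k}/k)$.
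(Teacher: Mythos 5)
Your overall skeleton --- pass to the Schmidt--Spiess maps $c^{i,j}_{U,n}$, use duality over $k$, exploit the cohomological simplicity of a rational surface, and locate the N\'eron--Severi hypothesis in the Gysin/restriction map for the boundary --- is the same as the paper's. But there is a genuine gap at the very first step: you assert the comparison $H_i^M(U,\Z/n(j))\cong H^{2d-i}_{et,c}(U,\Z/n(d-j))$ as if it always held. By Theorem \ref{bigth}/Proposition \ref{bydimension} this is guaranteed only for $i\leq j$ or $j\geq d+\cd F$, and the two maps that matter here, $c^{4,2}_{U,n}$ ($=\psi_U/n$ up to $\Pic$) and $c^{5,3}_{U,n}$ ($=\rho_U/n$), fall outside that range. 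If the comparison were an isomorphism there, $\psi_U/n$ would be injective for every $n$ and the counterexample in (1) could not exist. The actual content of the theorem lives precisely in the failure of this comparison, which Bloch--Ogus theory measures by unramified cohomology $H^0_{\Zar}(X,\mathcal{H}^3(\Q/\Z(2)))$ (Lemma \ref{unram}); the paper identifies this group with $\coker\bigl[\phi_X:A_0(X)\to H^1_{\Gal}(k,S)\bigr]$, the cokernel of the Bloch map to the $H^1$ of the N\'eron--Severi torus (Lemma \ref{unram-galcoh}). Your candidate obstruction ``$H^1(k,NS(\bar X)\otimes\mu_n)$'' is only the ambient group; whether anything survives depends on the image of $A_0(X)$, i.e.\ on an actual computation of the Chow group of the specific surface. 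This is why the counterexample cannot be produced by merely ``choosing $X$ with $H^1(k,NS(\bar X))\neq 0$'': the paper needs Colliot-Th\'el\`ene/Coray--Tsfasman (Ch\^atelet surfaces) or Saito--Sato (diagonal cubics) to exhibit cases with $A_0(X)\subsetneq H^1_{\Gal}(k,S)$, and in other cases of the same surfaces $F_0(U)=0$.

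Two further steps are asserted rather than proved. First, since $\psi_U/n$ is \emph{not} injective, deducing $\ker\psi_U=C_0(U)_{\Div}$ is not a ``standard limit argument'': one needs that $\varprojlim_n\ker(\psi_U/n)=0$, which the paper gets from the stabilization $\ker(\psi_U/nN)\cong F_0(U)$ for a fixed $N$ (Lemma \ref{easylemma}(2)); that stabilization rests on Saito's theorem that $c^{4,2}_{X,nN}$ is injective for suitable $N$ --- a deep input on torsion cycle class maps of rational surfaces, absent from your proposal --- together with the injectivity of $H^3_{et,c}(V,\Q/\Z(2))\to H^3_{et,c}(V\setminus U,\Q/\Z(2))$, which is where $(*)$ enters. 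Second, for (2) the bijectivity of $\rho_U/n$ reduces via Proposition \ref{keyprop}(2) to the bijectivity of $c^{4,3}_{X,n}$ and $c^{5,3}_{X,n}$; this is not formal Galois linear algebra but requires a corestriction/norm reduction to split rational surfaces, the vanishing of $H^0_{\Zar}(X,\mathcal{H}^4(\Z/n(3)))$ in the split case, the unique divisibility results of Proposition \ref{high2} for the boundary curve, and the existence of a $k$-rational point. (A minor point: no syntomic theory is needed at $p$ --- the base field has characteristic zero, so the Suslin--Voevodsky comparison applies to all $n$.)
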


Note that, if $X$ is a (projective smooth) rational surface,
then $\ker(\psi_X) = 0$
and $\ker(\psi_X \otimes \Z/n\Z) = 0$ for all sufficiently divisible $n$
(cf. Theorem \ref{rat}).
Note also that there are examples of (non-rational)
projective smooth surfaces $X$ and $X'$ for which
$\ker(\psi_X)$ and $\ker(\rho_{X'})$ are not divisible 
(cf. \cite{ps, sato2}).

\subsection{Conventions}\label{convention}
Let $A$ be an abelian group.
For a non-zero integer $n$,
we write $A[n]$ and $A/n$ for the kernel and cokernel
of the map $n: A \to A.$
This notation is sometimes used when $n=\infty$,
in which case we mean 
$A[n] = A_{\Tor}$ is the subgroup of torsion elements in $A$,
and
$A/n = A \otimes \Q/\Z$.
We write $A_{\Q}$ for $A \otimes_{\Z} \Q$.
We define $A_{\Div} := \Image[\Hom(\Q, A) \to \Hom(\Z, A)=A]$
to be the maximal divisible subgroup in $A,$
and $A_{\divi} := \cap_{n \in \Z_{>0}} nA$
the subgroup of divisible elements in $A$.
Note that we always have $A_{\Div} \subset A_{\divi}$,
and that $A_{\Div} = A_{\divi}$ holds 
if $A[n]$ is finite for all $n \in \Z_{>0}$.

Let $f:A \to B$ be a homomorphism of abelian groups.
We write $f/n$ for $f \otimes \Z/n$ when $n \in \Z_{>0}$,
and for $f \otimes \Q/\Z$ when $n=\infty$.
Let $m, n \in \Z_{>0}$.
The map $\ker(f/n) \to \ker(f/nm)$
induced by the map $m : A \to A$ is called the canonical map,
so that $\{ \ker(f/n) \}_n$ becomes an inductive system
whose limit is $\ker(f \otimes \Q/\Z)$.
The map $\ker(f/nm) \to \ker(f/n)$
induced by the identity map on $A$ is called the canonical map,
so that $\{ \ker(f/n) \}_n$ becomes an inverse system.

Let $F$ be a field.
A separated scheme of finite type over $F$ is called a variety over $F$.
Let $X$ be a variety over $F$.
For $i \in \Z_{\geq 0}$,
we write $X_{(i)}$ and $X^{(i)}$ for the set of
all points on $X$ of dimension $i$ and of codimension $i$
respectively.
We write $\bar{F}$ for an algebraic closure of $F$
and $\bar{X}$ for the base change $X \times_{\Spec F} \Spec \bar{F}$.
A closed subset of $X$ is always regarded 
as a {\it reduced} subvariety of $X$.

\noindent
{\it Acknowledgements.}
The author express his gratitude to Professors Shuji Saito 
and Bruno Kahn
for helpful comments.
Thanks are also due to the referee 
for his instrumental comments.
This research is partially supported by 
Grand-in-Aid for Young Scientists (A) (22684001).

\section{Wiesend's ideal class group and motivic homology}\label{sect-mot}
Let $F$ be a perfect field.
Except in the beginning of \S \ref{first} and in \S \ref{subsec-cg},
we will assume the characteristic of $F$ is zero.

\subsection{Motivic homology and cohomology}\label{first}
We recall some facts from \cite{voevodsky} and \cite{mvw}. 
Let $\DMn(F)$ denote 
the rigid triangulated tensor category of effective motivic complexes 
(\cite{voevodsky} 3.1, \cite{mvw} 14.1). 
There is a functor $M$ from the category of varieties over $F$
to $\DMn(F)$.
For a variety $X$ and $n \in \Z_{>0} \cup \{ \infty \}$,
we write 
$M(X, \Z/n\Z) = M(X) \otimes^{\mathbb{L}} \Z/n\Z$
(cf. \S \ref{convention}).
Let $i, j \in \Z$.
Motivic cohomology
and motivic homology
of a variety $X$ 
are defined by
\begin{align*}
H^i_M&(X, \Z(j)) = \Hom_{\DMn(F)}(M(X), \Z(j)[i]),
\\
H_i^M&(X, \Z(j)) = \Hom_{\DMn(F)}(\Z(j)[i], M(X)).
\end{align*}
For $n \in \Z_{>0} \cup \{ \infty \}$,
their coefficient version are defined by
\begin{align*}
H^i_M&(X, \Z/n \Z(j)) = \Hom_{\DMn(k)}(M(X), \Z/n \Z(j)[i]),
\\
H_i^M&(X, \Z/n \Z(j)) = \Hom_{\DMn(k)}(\Z(j)[i], M(X, \Z/n \Z)).
\end{align*}
They fit into (obvious) exact sequences
\begin{align}
\label{univcoeff0}
 0 \to 
H^i_M(X, \Z(j))/n \to
H^i_M(X, \Z/n\Z(j)) \to
H^{i+1}_M(X, \Z(j))[n]
\to 0,
\\
\label{univcoeff}
 0 \to 
H_i^M(X, \Z(j))/n \to
H_i^M(X, \Z/n\Z(j)) \to
H_{i-1}^M(X, \Z(j))[n]
\to 0.
\end{align}
Motivic cohomology 
$H^i_M(X, \Z(j))$
is contravariantly functorial in $X$.
It also has covariant functionality:
if $f: X \to Y$ is 
a proper flat equidimensional morphism of relative dimension $d$,
then there is an induced map
$H^i_M(X, \Z(j)) \to H_M^{i-2d}(Y, \Z(j-d))$.
Motivic homology 
$H_i^M(X, \Z(j))$
is covariantly functorial in $X$.
It also has contravariant functionality:
if $f: X \to Y$ is 
a proper flat equidimensional morphism of relative dimension $d$,
then there is an induced map
$H_i^M(Y, \Z(j)) \to H^M_{i+2d}(X, \Z(j+d))$.

The following fact will play an important role 
in the proof of Theorem \ref{cg-h}:
for any variety $X$, 
there is a decomposition 
\begin{equation}\label{dec}
H^M_i(X \times \G_m, \Z(j)) \cong
H^M_{i-1}(X, \Z(j-1)) \oplus  H^M_{i}(X, \Z(j)).
\end{equation}
This is deduced from
Mayer-Vietoris sequence, projective bundle formula
and $\mathbb{A}^1$-homotopy invariance.

From now through the end of \S \ref{motcpx},
we assume the characteristic of $F$ is zero.
If $X$ is a smooth variety,
we have a canonical isomorphism 
\begin{equation}\label{ch-hc}
H^i_{M}(X, \Z(j)) \cong CH^j(X, 2j-i),
\end{equation}
where the right hand side is Bloch's higher Chow group.
If $X$ is a smooth projective variety of pure dimension $d$,
we also have
\begin{equation}\label{h-ch}
H_i^{M}(X, \Z(j)) 
\cong H^{2d-i}_{M}(X, \Z(d-j))
\cong CH^{d-j}(X, i-2j).
\end{equation}
In particular, if $X = \Spec F$,
then we have for any $r \in \Z_{\geq 0}$
\begin{equation}\label{mcandk}
 H_{-r}^M(\Spec F, \Z(-r)) \cong H^r_M(\Spec F, \Z(r))
   \cong CH^r(\Spec F, r) \cong K_r^M F.
\end{equation}
Let $Z$ be a closed subvariety of a smooth variety $X$.
Suppose $Z$ is smooth and of pure codimension $c$.
Then we have a long exact sequence
\begin{equation}\label{gysin}
\cdots \to 
H_{i+1}^M(X, \Z(j)) \to 
H_{i+1-2c}^M(Z, \Z(j-c)) \to 
H_{i}^M(X \setminus Z, \Z(j)) \to
H_{i}^M(X, \Z(j)) \to 
\cdots.
\end{equation}

\begin{lemma}\label{bydimension0}
Let $X$ be a smooth variety of pure dimension $d$,
and let $i, j \in \Z$.
\begin{enumerate}
\item
Suppose either $j<0,~ i > j+d$ or $i > 2j$. 
Then, we have $H^i_{M}(X, \Z(j))=0$.
\item
Suppose $j>i$.
Then, we have $H_i^{M}(X, \Z(j))=0$.
\end{enumerate}
The same holds for the $\Z/n$-coefficient version 
for any $n \in \Z_{>0} \cup \{ \infty \}$.
\end{lemma}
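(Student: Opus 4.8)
The plan is to treat the two parts separately: part (1) follows essentially immediately from the higher Chow description of motivic cohomology, while part (2) is proved by induction on $\dim X$ using the Gysin sequence, the nontrivial point being the reduction of a general smooth $X$ to the smooth projective case.

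For part (1) I would invoke the isomorphism \eqref{ch-hc}, which identifies $H^i_M(X,\Z(j))$ with Bloch's higher Chow group $CH^j(X,2j-i)$. The latter is the homology in degree $m:=2j-i$ of the cycle complex $z^j(X,\bullet)$, whose term in simplicial degree $m$ is freely generated by the codimension-$j$ subvarieties of $X\times\Delta^m$ meeting all faces properly. This complex is concentrated in degrees $m\geq 0$, and since $X\times\Delta^m$ has dimension $d+m$ its degree-$m$ term vanishes unless $0\leq j\leq d+m$. Hence $CH^j(X,m)=0$ as soon as $m<0$, i.e. $i>2j$, and also as soon as $j>d+m$, i.e. $i>j+d$ (and trivially when $j<0$); this covers both hypotheses of (1).

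For part (2) I would argue by induction on $d=\dim X$, the statement $P(d)$ being the asserted vanishing for all smooth $X$ of pure dimension $d$. When $X$ is smooth \emph{projective} of pure dimension $d$, the duality \eqref{h-ch} gives $H_i^M(X,\Z(j))\cong CH^{d-j}(X,i-2j)$, and the dimension count just used shows this vanishes exactly when $d-j>d+(i-2j)$, that is when $j>i$; in particular $P(0)$ holds (a smooth $0$-dimensional variety over $F$ is projective). For the inductive step, since $\Char F=0$ I would choose a smooth projective compactification $\bar X$ of $X$ and set $Z=\bar X\setminus X$, with $\dim Z\leq d-1$. Choosing a filtration $\emptyset=Z_{-1}\subset Z_0\subset\cdots\subset Z_e=Z$ by closed subsets of $\bar X$ whose successive differences $S_k:=Z_k\setminus Z_{k-1}$ are smooth of pure dimension $k$ (such a filtration exists over the perfect field $F$), I put $X_k:=\bar X\setminus Z_k$, so that $X_{-1}=\bar X$, $X_e=X$, and $X_k=X_{k-1}\setminus S_k$ with $S_k$ smooth closed of pure codimension $c_k=d-k\geq 1$ in the smooth variety $X_{k-1}$. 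The Gysin sequence \eqref{gysin} then reads
\[
\cdots \to H_{i+1-2c_k}^M(S_k,\Z(j-c_k)) \to H_i^M(X_k,\Z(j)) \to H_i^M(X_{k-1},\Z(j)) \to \cdots,
\]
so it suffices to kill the two outer groups for $j>i$. The right-hand group vanishes by a secondary induction on $k$ (with base $X_{-1}=\bar X$ handled above). For the left-hand group I apply the main induction hypothesis $P(k)$ to $S_k$ (of dimension $k<d$): writing $m=i+1-2c_k$ and $n=j-c_k$, one has $n-m=(j-i-1)+c_k\geq 0+1>0$ since $j>i$ and $c_k\geq 1$, so $n>m$ and $P(k)$ gives $H_m^M(S_k,\Z(n))=0$. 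Thus $H_i^M(X_k,\Z(j))=0$ for all $k$, and the case $k=e$ is the desired vanishing for $X$.

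Finally, the $\Z/n$-coefficient versions (including $n=\infty$) follow formally from the universal coefficient sequences \eqref{univcoeff0} and \eqref{univcoeff}: the defining inequalities in (1) are preserved when $i$ is increased by one, and that in (2) when $i$ is decreased by one, so both integral groups flanking $H^{\bullet}_M(X,\Z/n(j))$ (resp. $H_{\bullet}^M(X,\Z/n(j))$) already vanish, whence so does the middle term. The main obstacle is the inductive reduction in part (2): one must pass from a general smooth $X$ to its smooth projective compactification and control the boundary, which forces the double induction (on $\dim X$ and along the stratification of $Z$) together with the requirement that the strata be smooth of pure codimension so that \eqref{gysin} applies — and this is precisely where the characteristic-zero hypothesis, via resolution of singularities, enters.
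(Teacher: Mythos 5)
Your proposal is correct and takes essentially the same route as the paper's own (very terse) proof: part (1) by the higher Chow/cycle-complex vanishing as in Mazza--Voevodsky--Weibel, part (2) for projective $X$ by combining (1) with the duality \eqref{h-ch}, and the general case by induction on $\dim X$ via the Gysin sequence \eqref{gysin}, with the coefficient versions following from \eqref{univcoeff0} and \eqref{univcoeff}. You have merely supplied the details (the smooth stratification of the boundary and the shift bookkeeping) that the paper leaves implicit.
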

\begin{proof}
For (1), see \cite[(3.6), (19.3)]{mvw}.
If $X$ is projective,
(2) follows from (1) and \eqref{h-ch}.
The general case follows 
by induction on $\dim X$ and \eqref{gysin}.
\end{proof}

By Lemma \ref{bydimension0} and \eqref{univcoeff},
we can identify
$H_i^M(X, \Z(i))/n = H_i^M(X, \Z/n(i))$
for any $i \in \Z$ and $n \in \Z_{>0}$,
which will be frequently used without further notice.

\begin{remark}
If $X$ is a smooth variety of pure dimension $d$,
then motivic homology $H_i^M(X, \Z(j))$ is
isomorphic to {\it motivic cohomology with compact support}
$H^{2d-i}_{M, c}(X, \Z(d-j))$.
However, the work of Geisser 
(cf. Remark \ref{geisserremark})
suggests that motivic homology would be better
for a further generalization,
so we opt to use homology theory.
\end{remark}

\subsection{Motivic complex}\label{motcpx}
We recall some results on motivic complex.
Let $X$ be a smooth variety over $F$, and let $j \in \Z_{\geq 0}$.
There is a complex $\Z(j)_X$
of Zariski sheaves on $X$ \cite[(3.1)]{mvw},
which is concentrated in degrees $\leq j$.
The hypercohomology of $\Z(j)_X$ agrees with motivic cohomology
\cite[(14.16)]{mvw}:
\[ H^i_{\Zar}(X, \Z(j)_X) \cong H^i_M(X, \Z(j)). \]

Thanks to the recent resolution of Bloch-Kato conjecture
by Rost and Voevodsky (see \cite{sj, voevodsky2, weibel}),
the following important result
of Suslin-Voevodsky \cite{sv2} 
(see also Geisser-Levine \cite{gl})
holds unconditionally.

\begin{theorem}[\cite{sv2, gl}]\label{bigth}
Let $X$ be a smooth variety over $F$, 
and let $\pi : X_{et} \to X_{\Zar}$ be the natural map of sites.
Let $j \in \Z_{\geq 0}$ and $n \in \Z_{>0}$.
\begin{enumerate}
\item
There is a canonical isomorphism
\begin{equation}\label{mapinone}
\pi^* \Z(j)_X \otimes^{\mathbb{L}} \Z/n \cong \mu_n^{\otimes j}.
\end{equation}
Consequently, we have a canonical map for any $i \in \Z_{\geq 0}$
\begin{equation}\label{cetale}
H^i_M(X, \Z/n(j)) \to H^i_{et}(X, \mu_n^{\otimes j}).
\end{equation}
\item
The map \eqref{mapinone} induces an isomorphism
\[ \Z(j)_X \otimes^{\mathbb{L}} \Z/n 
\to \tau_{\leq j} R \pi_* \mu_n^{\otimes j}. \]
Consequently, the map \eqref{cetale} 
is an isomorphism if 
either $i \leq j$ or $j \geq \dim X + \cd F$.
(Here $\cd$ means the cohomological dimension.)
It is an injection if $i=j+1$.
\end{enumerate}
\end{theorem}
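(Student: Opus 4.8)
This is the Beilinson--Lichtenbaum statement, and the plan is to deduce it from the norm residue (Bloch--Kato) isomorphism, which by Rost--Voevodsky holds for every field of characteristic prime to $n$. Part (1) is the unconditional \'etale-local comparison, resting on Suslin's rigidity theorem and the computation of \'etale motivic cohomology with finite coefficients; part (2) is where the full force of the norm residue theorem enters, and is the real content.

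For part (1) I would first settle the low weights explicitly. For $j=0$ one has $\Z(0)_X = \Z$, whose reduction is the constant sheaf $\Z/n = \mu_n^{\otimes 0}$. For $j=1$ one uses $\Z(1)_X \simeq \G_m[-1]$, and the Kummer sequence on $X_{\et}$ identifies $\G_m \otimes^{\mathbb{L}} \Z/n$ with $\mu_n[1]$, so that $\pi^* \Z(1)_X \otimes^{\mathbb{L}} \Z/n \cong \mu_n$. For general $j$ one propagates this along the product maps $\Z(1)^{\otimes j} \to \Z(j)$: on the \'etale site the mod-$n$ realization is a ring map, $\mu_n^{\otimes j}$ is generated in weight one, and Suslin's rigidity theorem forces the comparison to be an isomorphism; this is exactly \cite{sv2}. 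Granting \eqref{mapinone}, the map \eqref{cetale} is formal: by the adjunction $\pi^* \dashv R\pi_*$ the isomorphism \eqref{mapinone} transposes to a morphism $\Z(j)_X \otimes^{\mathbb{L}} \Z/n \to R\pi_* \mu_n^{\otimes j}$ of complexes of Zariski sheaves, and taking Zariski hypercohomology, together with $H^i_{\et}(X, \mu_n^{\otimes j}) = H^i_{\Zar}(X, R\pi_* \mu_n^{\otimes j})$, yields \eqref{cetale}.

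For part (2) I would verify the sharper claim $\Z(j)_X \otimes^{\mathbb{L}} \Z/n \xrightarrow{\sim} \tau_{\leq j} R\pi_* \mu_n^{\otimes j}$ on Zariski cohomology sheaves. Both sides carry Gersten resolutions, so this reduces to the case of a (finitely generated) field $E$, i.e.\ to showing $H^i_M(\Spec E, \Z/n(j)) \to H^i_{\et}(\Spec E, \mu_n^{\otimes j})$ is an isomorphism for $i \leq j$. Since $\Z(j)_X$ is concentrated in degrees $\leq j$, the statement is vacuous for $i>j$ (both vanish). For $i=j$ the source is $\mathcal{H}^j(\Z(j)_X)/n \cong \mathcal{K}^M_j/n$, using $\mathcal{H}^j(\Z(j)_X) \cong \mathcal{K}^M_j$ together with \eqref{mcandk}, and the map becomes precisely the norm residue homomorphism $K^M_j E / n \to H^j_{\et}(E, \mu_n^{\otimes j})$, an isomorphism by Bloch--Kato. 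The remaining degrees $i<j$ are the main obstacle: they are handled by induction on $j$ \`a la Suslin--Voevodsky and Geisser--Levine, where a Bockstein/Bott-element argument relates weight $j$ to weight $j-1$ and feeds back the norm residue isomorphism in \emph{all} weights $\leq j$ rather than only weight $j$. This is the step that genuinely requires Bloch--Kato in its full strength and is far from formal.

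Finally the stated consequences follow from the truncation triangle $\tau_{\leq j} R\pi_* \mu_n^{\otimes j} \to R\pi_* \mu_n^{\otimes j} \to \tau_{>j} R\pi_* \mu_n^{\otimes j}$ by passing to Zariski hypercohomology. Since $\tau_{>j} R\pi_* \mu_n^{\otimes j}$ has cohomology sheaves only in degrees $>j$, its hypercohomology vanishes in degrees $\leq j$, so the comparison map is an isomorphism for $i \leq j$ and injective for $i = j+1$. When $j \geq \dim X + \cd F$, the stalks of $R^q \pi_* \mu_n^{\otimes j}$ are \'etale cohomology groups of local rings whose fraction fields have cohomological dimension at most $\dim X + \cd F$, so $R^q \pi_* \mu_n^{\otimes j} = 0$ for $q > j$; then $\tau_{\leq j} R\pi_* \mu_n^{\otimes j} = R\pi_* \mu_n^{\otimes j}$ and \eqref{cetale} is an isomorphism in every degree.
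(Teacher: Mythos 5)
Your proposal is correct, but note that the paper offers no proof of this statement at all: it is quoted directly from Suslin--Voevodsky \cite{sv2} and Geisser--Levine \cite{gl}, made unconditional by the Rost--Voevodsky resolution of Bloch--Kato, and your sketch is essentially a faithful outline of the argument in those references, deferring the same genuinely hard step (that the norm residue isomorphism in all weights $\leq j$ forces the comparison in degrees $i<j$) to the same sources. The only point worth tightening is the vanishing of $R^q\pi_*\mu_n^{\otimes j}$ for $q>\dim X+\cd F$: one should pass from the cohomological dimension of the fraction field to the stalk at a local ring via the Bloch--Ogus Gersten resolution (or the fact that $\Spec\mathcal{O}_{X,x}$ is a limit of affine varieties), rather than invoking the fraction field alone.
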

In what follows, 
we frequently write
$H^i_{et}(X, \Z/n(j))$ for $H^i_{et}(X, \mu_n^{\otimes j})$.

\subsection{Wiesend's ideal class group}\label{subsec-cg}
In this subsection, we do not assume $\Char F=0$.
We shall prove Theorem \ref{cg-h},
after introducing a simple lemma.

\begin{lemma}
Let $r \in \Z_{\geq 0}$.
Let $f: X \to X'$ be a morphism of varieties over $F$.
There is a unique homomorphism $f_* : C_r(X) \to C_r(X')$
characterized by the following property:
for any $x \in X_{(0)}$, the diagram
\[
\begin{matrix}
K_r^M F(x) & \to & C_r(X)
\\
\qquad \quad
\downarrow^{N_{F(x)/F(f(x))}}  & & \downarrow_{f_*}
\\
K_r^M F(f(x)) & \to & C_r(X')
\end{matrix}
\]
commutes.
(Here the upper and lower horizontal maps are the natural map
to the $x$-component and $f(x)$-component respectively.)
This makes $C_r$ a covariant functor 
on the category of varieties over $F$.
\end{lemma}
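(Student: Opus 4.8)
The plan is to define the pushforward first on the level of zero-cycles with Milnor $K$-theory coefficients and then to check that it respects the defining relations of $C_r$. Write $Z(X):=\bigoplus_{x\in X_{(0)}}K_r^M F(x)$, and for $y\in X_{(1)}$ let $\pd\colon K_{r+1}^M F(y)\to Z(X)$ denote the component of the Gersten differential attached to $y$, so that $C_r(X)=Z(X)/W(X)$ with $W(X)$ the subgroup generated by the elements $\pd(\alpha)$, $\alpha\in UK_{r+1}^M F(y)$. For $x\in X_{(0)}$ the image $f(x)$ again lies in $X'_{(0)}$ (its residue field is finite over $F$), so the norms $N_{F(x)/F(f(x))}$ assemble into a homomorphism $\tilde f_*\colon Z(X)\to Z(X')$ carrying the $x$-component into the $f(x)$-component. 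Since $C_r(X)$ is generated by the images of the groups $K_r^M F(x)$, the stated commutativity determines $f_*$ on generators; thus \emph{uniqueness}, as well as functoriality $(g\circ f)_*=g_*\circ f_*$ and $(\id)_*=\id$, follow at once from transitivity of norms. For \emph{existence} it remains to show $\tilde f_*(W(X))\subseteq W(X')$, for then $\tilde f_*$ descends to $f_*\colon C_r(X)\to C_r(X')$.

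So fix $y\in X_{(1)}$ and $\alpha\in UK_{r+1}^M F(y)$. Working on the normalization $\tilde C(y)$ and using transitivity of norms, $\tilde f_*(\pd\alpha)=\sum_{x\in\tilde C(y)}N_{F(x)/F(f(x))}(\pd_x\alpha)$, the $x$-term placed in the $f(x)$-component; by the definition of $UK_{r+1}^M F(y)$ one has $\pd_x\alpha=0$ and $\pd_x(\{\pi_x\}\cup\alpha)=0$ for every $x\in C_\infty(y)=\bar C(y)\setminus\tilde C(y)$. Since $\dim\overline{\{f(y)\}}\le 1$, there are two cases. If $\dim\overline{\{f(y)\}}=1$, set $y':=f(y)\in X'_{(1)}$; the induced dominant map of normalizations extends to a finite morphism $\phi\colon\bar C(y)\to\bar C(y')$ of smooth complete curves, and $F(y)/F(y')$ is finite. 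Put $\beta:=N_{F(y)/F(y')}(\alpha)$. The two inputs are the commutation of residue with norm, $\pd_{x'}\beta=\sum_{\phi(x)=x'}N_{F(x)/F(x')}(\pd_x\alpha)$ for each $x'\in\bar C(y')$, and the projection formula $\{\pi_{x'}\}\cup\beta=N_{F(y)/F(y')}(\{\pi_{x'}\}\cup\alpha)$. Every closed point of $C(y)$ maps to a closed point of $C(y')$ (points of $C_\infty(y')$ do not lie on $X'$), so $\phi^{-1}(C_\infty(y'))\subseteq C_\infty(y)$; feeding this into the residue identity and using $\pd_x\alpha=0$ on $C_\infty(y)$ gives $\tilde f_*(\pd\alpha)=\pd(\beta)$, while evaluating both identities at $x'\in C_\infty(y')$ and using the vanishing of $\pd_x\alpha$ and $\pd_x(\{\pi_x\}\cup\alpha)$ on $C_\infty(y)$ shows $\beta\in UK_{r+1}^M F(y')$. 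Hence $\tilde f_*(\pd\alpha)=\pd(\beta)\in W(X')$.

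If instead $\dim\overline{\{f(y)\}}=0$, then $f$ contracts $C(y)$ to a single point $x':=f(y)\in X'_{(0)}$, and $\tilde f_*(\pd\alpha)=\sum_{x\in\tilde C(y)}N_{F(x)/F(x')}(\pd_x\alpha)$ lies in the single component $K_r^M F(x')$. Here I invoke the Milnor $K$-theory reciprocity law on the complete curve $\bar C(y)$ over its field of constants $E$ (the algebraic closure of $F$ in $F(y)$), namely $\sum_{x\in\bar C(y)}N_{F(x)/E}(\pd_x\alpha)=0$ in $K_r^M E$. The terms with $x\in C_\infty(y)$ vanish by the $UK$-condition, leaving $\sum_{x\in\tilde C(y)}N_{F(x)/E}(\pd_x\alpha)=0$; since $F(x')\subseteq E$, applying $N_{E/F(x')}$ and using transitivity of norms yields $\tilde f_*(\pd\alpha)=0\in W(X')$.

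The essential point, and the only place where the possibly non-proper case departs from the classical picture, is that for non-proper $f$ the boundary can ``escape to infinity''; the tame/modulus conditions defining $UK_{r+1}^M$ are exactly what force these stray contributions to vanish, so that the norm–residue compatibility, the projection formula, and reciprocity close up on the finite part. I therefore expect the main work to be the careful check in the first case that $N_{F(y)/F(y')}(\alpha)$ again satisfies \emph{both} conditions of $UK_{r+1}^M F(y')$ at the points of $C_\infty(y')$, which rests on the behaviour of $\phi$ on $C_\infty(y)$ together with the projection formula; all of the underlying compatibilities of norms, residues, the projection formula, and reciprocity for Milnor $K$-theory are standard (Bass–Tate, Kato, Rost).
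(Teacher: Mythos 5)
Your proposal is correct and follows essentially the same route as the paper: reduce to showing that the boundary of $UK_{r+1}^M F(y)$ dies in $C_r(X')$, splitting into the case where $f$ contracts $C(y)$ to a closed point (handled by Weil reciprocity, with the $UK$-conditions killing the contributions from $C_\infty(y)$) and the case where $f(y)$ remains one-dimensional (handled by the compatibility of tame symbols with norms, i.e.\ the paper's Lemma \ref{basictame}, together with $f_y^{-1}(C_\infty(y'))\subset C_\infty(y)$). You in fact spell out a step the paper leaves implicit, namely the projection-formula check that $N_{F(y)/F(y')}(\alpha)$ satisfies the second defining condition of $UK_{r+1}^M F(y')$.
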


\begin{proof}
Uniqueness is clear by the definition of $C_r(X)$.
Functoriality follows from that of the norm map
of Milnor $K$-groups.
We prove well-definedness.
Take $y \in X_{(1)}$ and put $y'=f(y)$.
We need to show that the image of
$UK_{r+1}^M F(y)$ in $C_r(X')$ is trivial.
We consider two cases separately.
First we assume $y' \in X'_{(0)}$.
With the notation of \S \ref{sect:intro-wie},
we regard $\bar{C}(y)$ as a curve over $F(y')$.
Weil reciprocity \cite[Proposition 7.4.4]{gs}
shows that the composition map
\[ K_{r+1}^M F(y) \overset{\oplus \pd_x}{\to} 
\bigoplus_{x \in \bar{C}(y)_{(0)}} K_r^M F(x) 
\overset{\oplus N_{F(x)/F(y')}}{\to} K_r^M F(y') 
\]
is the zero-map.
In view of the definition of $UK_{r+1}^M F(y)$,
this proves our assertion in this case.
Next, we assume $y' \in X'_{(1)}$
so that $f$ induces a morphism
$f_y : \bar{C}(y) \to \bar{C}(y')$ such that
$f_y^{-1}(C_{\infty}(y')) \subset C_{\infty}(y)$.
It suffices to show that
the image of $UK_{r+1}^M F(y)$ by the norm map
$K_{r+1}^M F(y) \to K_{r+1}^M F(y')$ is
contained in $UK_{r+1}^M F(y')$.
This follows from
the following basic fact on the tame symbol.
(We omit its proof.)
\end{proof}

\begin{lemma}\label{basictame}
Let $L/K$ be a finite extension of fields.
Let $v$ a discrete valuation on $K$,
and let $\{ v_1, \cdots, v_f \}$ be 
the set of all extensions of $v$ to $L$.
We write $C$ and $D_i ~(1 \leq i \leq f)$
for the residue field of $v$ and $v_i$ respectively.
Let $r \in \Z_{>0}$.
For any $\lambda \in K_r^M L$,
we have 
\[ \pd_v N_{L/K} \lambda = \sum_{i=1}^f N_{D_i/C} \pd_{v_i} \lambda.
\]
\end{lemma}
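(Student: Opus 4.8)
The plan is to reduce the identity to a single henselian local field and then to a computation on explicit generators. First I would observe that the tame symbol depends only on the completion: the map $\pd_v : K_r^M K \to K_{r-1}^M C$ factors through $K_r^M \hat{K}$, where $\hat{K}$ is the completion of $K$ at $v$, and likewise each $\pd_{v_i}$ factors through $K_r^M \hat{L}_i$, where $\hat{L}_i$ is the completion of $L$ at $v_i$. When $L/K$ is separable the completion identifies $L \otimes_K \hat{K}$ with $\prod_{i=1}^f \hat{L}_i$, and the residue field of $\hat{L}_i$ is again $D_i$ while that of $\hat{K}$ is $C$. Combined with the base-change compatibility of the norm along $K \to \hat{K}$, namely $\mathrm{res}_{\hat{K}} \circ N_{L/K} = \sum_i N_{\hat{L}_i/\hat{K}} \circ \mathrm{res}_{\hat{L}_i}$, this reduces the statement to the case where $K$ is complete (hence henselian) and $f=1$, i.e. to proving $\pd_v N_{L/K}(\lambda) = N_{D/C}\,\pd_w(\lambda)$ for a finite extension $L/K$ of complete discretely valued fields with $w$ the unique extension of $v$.

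In this local situation I would exploit that, since $K$ is henselian, every element of $L^*$ is a unit times a power of a uniformizer, and the relation $\{\pi_L,\pi_L\}=\{\pi_L,-1\}$ allows one to rewrite an arbitrary symbol so that at most one entry is a non-unit. Hence $K_r^M L$ is generated by symbols of two types: $\{u_1,\dots,u_r\}$ with all $u_j \in \mathcal{O}_L^*$, and $\{\pi_L,u_2,\dots,u_r\}$ with $\pi_L$ a uniformizer and the $u_j$ units. For the first type one has $\pd_w\{u_1,\dots,u_r\}=0$, so the right-hand side vanishes, and the content is that $\pd_v N_{L/K}\{u_1,\dots,u_r\}=0$; for the second type one computes $\pd_w\{\pi_L,u_2,\dots,u_r\}=\{\bar u_2,\dots,\bar u_r\}$ and must match $\pd_v N_{L/K}\{\pi_L,u_2,\dots,u_r\}$ with $N_{D/C}\{\bar u_2,\dots,\bar u_r\}$. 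The essential input is the structure theory of the norm on a henselian discretely valued field: $N_{L/K}$ preserves the filtration by the unit subgroups $U^{(m)}K_r^M$ and induces the residue-field norm $N_{D/C}$ on the associated graded pieces. Granting this, the first type lands in $\ker \pd_v$ and the second follows by a direct symbol computation together with the projection formula, so both cases close.

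The main obstacle is twofold. First, making the reduction in the first paragraph rigorous requires the base-change compatibility of the Milnor norm along $K \to \hat{K}$, which is not formal: the norm is defined by a tower construction, so one must invoke its independence of the chosen tower. Second, because we do not assume $\Char F = 0$, inseparability must be handled: $L \otimes_K \hat{K}$ may fail to split as $\prod_i \hat{L}_i$, so the passage to residue fields and the bookkeeping of the $v_i$ must be carried out with care, for instance by reducing separately to the separable and the purely inseparable cases via a tower. I expect the genuine work to lie entirely in these local and inseparability issues; the identity itself is a standard property of Milnor $K$-theory, treated in Bass--Tate, in Kato's work on the $K$-theory of complete discrete valuation fields, in Rost's theory of cycle modules, and in \cite[Chapter 7]{gs}, which is why we are content to omit the proof.
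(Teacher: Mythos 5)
The paper does not prove this lemma at all: it is stated with the explicit remark ``(We omit its proof.)'' and is treated as a standard fact about Milnor $K$-theory, so there is no argument of the author's to compare yours against. Your sketch is a faithful outline of the standard proof (as in Bass--Tate, Kato \cite{kato}, Rost's cycle modules, or \cite[Ch.~7]{gs}), and you correctly isolate the two places where real work is needed: the compatibility of the Kato norm with completion, and the failure of $L \otimes_K \hat{K}$ to split as $\prod_i \hat{L}_i$ in the inseparable case. The one caveat I would raise is in your second paragraph: the ``essential input'' you propose to grant --- that for a complete discretely valued field the norm $N_{L/K}$ preserves the unit filtration on $K_r^M$ and induces $N_{D/C}$ on the associated graded pieces --- is not an auxiliary structural fact but is, up to the elementary generator bookkeeping you describe, \emph{equivalent} to the identity $\pd_v N_{L/K} = N_{D/C}\,\pd_w$ in the local case. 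So that paragraph is closer to a reformulation than a reduction; the genuine content of the lemma lives exactly there (for $r=1$ it is the classical formula $v(N_{L/K}x)=f\cdot w(x)$ together with $N_{L/K}(\mathcal{O}_L^*)\subset\mathcal{O}_K^*$, and for $r\geq 2$ it is Kato's computation). Since you, like the paper, ultimately defer to the literature rather than claim a self-contained proof, this is acceptable, but the write-up should not suggest that the local step is a routine consequence of an independently available ``structure theory.''
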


\noindent
{\it Proof of Theorem \ref{cg-h}.}
The case $r=0$ is 
a conjunction of 
the comparison theorem of motivic homology with Suslin's homology
(\cite[Corollary 3.2.7]{voevodsky}, \cite[Proposition 14.18]{mvw})
and the comparison theorem of Suslin's homology
and $C_0(V)$ \cite[Theorem 5.1]{schmidt}.
We proceed by induction on $r$.

We construct a commutative diagram
\begin{equation}\label{indfirst}
\begin{matrix}
\bigoplus_{y \in (V \times \G_m)_{(1)}} UK_{r+1}^M F(y) &
\overset{\pd}{\to} &
\bigoplus_{x \in (V \times \G_m)_{(0)}} K_{r}^M F(x) &
\\
\downarrow_f &
&
\downarrow_g &
\\
\bigoplus_{w \in V_{(1)}} UK_{r+2}^M F(w) &
\overset{\pd}{\to} &
\bigoplus_{z \in V_{(0)}} K_{r+1}^M F(z). &
\end{matrix}
\end{equation}
The two horizontal maps $\pd$
are given by the tame symbol.
For $x \in (V \times \G_m)_{(0)}$ and $z \in V_{(0)}$,
the $(x, z)$-component of $g$ is given as follows.
It is the zero-map if $z \not= p_0(x)$,
where $p_0 : V \times \G_m \to V$ is the projection.
Suppose $z=p_0(x)$, so that $F(x)$ is a finite extension of $F(z)$.
The composition $x \to V \times \G_m \to \G_m$
defines an element $\xi(x) \in \G_m(F(x))=F(x)^*$.
Now the map in question is given by
\[  K_r^M F(x) \overset{\text{mult. by}~ \xi(x)}{\to}
    K_{r+1}^M F(x) \overset{N_{F(x)/F(z)}}{\to} K_{r+1}^M F(z).
\]
Next, let $y \in (V \times \G_m)_{(1)}$ and $w \in V_{(1)}$.
If $w \not= p_0(y)$, then
the $(y, w)$-component of $f$ is the zero-map.
Suppose $w=p_0(y)$, so that $F(y)$ is a finite extension of $F(w)$.
The composition $y \to V \times \G_m \to \G_m$
defines an element $\xi(y) \in \G_m(F(y))=F(y)^*$.
Now the $(y, w)$-component of $f$ is given by
\[  UK_{r+1}^M F(y) \overset{\text{mult. by}~ \xi(y)}{\to}
    UK_{r+2}^M F(y) \overset{N_{F(y)/F(w)}}{\to} UK_{r+2}^M F(w).
\]
We check the commutativity.
Let $y \in (V \times \G_m)_{(1)}$ 
and $\lambda \in UK_{r+1}^M F(y)$.
Put $w = p_0(y)$.
When $w \in V_{(0)}$, 
by definition we have $\pd(f(\lambda))=0$,
and setting $z=w$ we have
\begin{align*}
g(\pd(\lambda)) &= g( \sum_{x \in C(y)} \pd_x \lambda)
 = \sum_{x \in \tilde{C}(y)} N_{F(x)/F(z)} \{ \pd_{x} \lambda, \xi(x) \}
\\
 &\overset{(1)}{=} \sum_{x \in \tilde{C}(y)} 
   N_{F(x)/F(z)} \pd_x \{ \lambda, \xi(y) \}
\\
 &\overset{(2)}{=} [\sum_{x \in \tilde{C}(y)} + \sum_{x \in C_{\infty}(y)}]  
     ~(N_{F(x)/F(z)} \pd_x \{ \lambda, \xi(y) \})
\\
 &\overset{(3)}{=}  0.
\end{align*}
Here, at (1) we used the fact that
for all $x \in \tilde{C}(y)$ we have
$\ord_x \xi(y) =0$ and
$\pd_x \{ \xi(y), \pi_x \} = \xi(x)$,
where $\pi_x \in k(y)^*$ is a uniformizer at $x$.
At (2), we used the definition of $UK_{r+1}^M F(y)$
(that is,
$\pd_x(\lambda)=0$ and $\pd_x \{ \lambda, \pi_x \} = 0$
for all $x \in C_{\infty}(y)$).
The equality (3) is Weil reciprocity
\cite[Proposition 7.4.4]{gs}.
Now we suppose $w \in V_{(1)}$.
We take $z \in V_{(0)}$,
and we write $\{x_1, \cdots, x_n\}$ for the set of
all points on $\tilde{C}(y)$ above $z$.
We have 
\begin{align*}
\text{$z$-component of}~
\pd(f(\lambda)) &= \pd_z N_{F(y)/F(w)} \{ \lambda, \xi(y) \}
\\
&\overset{(1)}{=} 
\sum_{i=1}^n N_{F(x_i)/F(z)} \pd_{x_i} \{ \lambda, \xi(y) \}
\\
&\overset{(2)}{=} 
\sum_{i=1}^n  N_{F(x_i)/F(z)} \{\pd_{x_i}(\lambda), \xi(x_i)\}
\\
&= \text{$z$-component of}~g(\pd(\lambda)).
\end{align*}
At (1), we used Lemma \ref{basictame}.
The equality (2) follows from the fact that
$\ord_x \xi(y) = 0$ and 
$\pd_x \{ \xi(y), \pi_x \} = \xi(x)$
for all $x \in \tilde{C}(y)$ 
This proves the commutativity.


It can be seen that both $f$ and $g$ are surjective without difficulty.
As a consequence,
we get a surjection
\[ \alpha : C_r(V \times \G_m) \to C_{r+1}(V). \]
It follows from the definition that
the composition
$C_r(V) = C_r(V \times \{ 1 \}) 
\to C_r(V \times \G_m) \overset{\alpha}{\to} C_{r+1}(V)$
is the zero-map.

Next, we consider a diagram
(cf. Kahn \cite[(4.3)]{kahn})
\begin{equation}\label{doubleind}
\begin{matrix}
\bigoplus_{x \in (V \times \G_m)_{(0)}} K_{r}^M F(x) &
\to &
H_{-r}^M(V \times \G_m, \Z(-r))
\\
\downarrow^g &
&
\downarrow &
\\
\bigoplus_{z \in V_{(0)}} K_{r+1}^M F(z) &
\to &
H_{-r-1}^M(V, \Z(-r-1)).
\end{matrix}
\end{equation}
Here the horizontal maps are induced by the functoriality
of motivic homology 
and \eqref{mcandk}.
The right vertical map is the projection with respect to 
the decomposition \eqref{dec}
The left vertical map $g$ was defined above.

We denote by $P_r(V)$ the assertion
that the diagram \eqref{doubleind} is commutative.
We also denote by $Q_r(V)$ the assertion that
$\oplus_{x \in V_{(0)}} K_{r}^M F(x) \to H_{-r}^M(V, \Z(-r))$
induces an isomorphism
$C_r(V) \cong H_{-r}^M(V, \Z(-r))$
(which is what the theorem claims).
We claim:
\begin{enumerate}
\item We have $Q_0(V)$ for all $V$.
\item Fix $r \in \Z_{\geq 0}$ 
and a finite extension $F'/F$.
If $Q_r(\G_m \times \Spec F')$ holds,
then $P_r(\Spec F')$ holds.
\item Fix $r \in \Z_{\geq 0}$.
If $P_r(\Spec F')$ holds for any finite extension $F'/F$,
then $P_r(V)$ holds for all $V$.
\item Fix $r \in \Z_{\geq 0}$ and $V$.
If $P_r(V), Q_r(V)$ and $Q_r(V \times \G_m)$ hold,
then $Q_{r+1}(V)$ holds.
\end{enumerate}
Indeed, (1) was already remarked at the beginning of the proof.
(2) holds because 
we have $Q_{r+1}(\Spec F')$ by \eqref{mcandk}.
(By the definition of $C_r(V)$,
if one has
$Q_r(V \times \G_m)$ and $Q_{r+1}(V)$,
then $P_r(V)$ becomes trivial.)
(3) is clear from the definition of $P_r(V)$.
We prove (4). 
By $P_r(V), Q_r(V \times \G_m)$ and the surjectivity of $f$,
we get a well-defined map $(!)$
in the commutative diagram
\[
\begin{matrix}
C_r(V \times \G_m)/C_r(V) &
\overset{\alpha}{\twoheadrightarrow} &
C_{r+1}(V)
\\
\downarrow &
&
\downarrow^{(!)}
\\
H_{-r}^M(V \times \G_m, \Z(-r))/H_{-r}^M(V, \Z(-r)) &
\cong &
H_{-r-1}^M(V, \Z(-r-1)).
\end{matrix}
\]
By $Q_r(V)$ and $Q_r(V \times \G_m)$,
the left vertical map is an isomorphism.
It follows the right vertical map is also an isomorphism,
which shows (4).
Now the theorem follows by induction.
\qed

\section{Etale cohomology with compact support}\label{sect-et}
In this section,
$F$ is a field of characteristic zero.

\subsection{The map $c^{i, j}_{X, n}$}
Let $V$ be a smooth irreducible variety over $F$
of dimension $d$.
Schmidt and Spiess 
\cite[Proposition 3.1 and p. 26 Remarks]{ss2}
constructed a canonical homomorphism
\begin{equation}\label{ss-map}
c^{i, j}_{V, n} : 
 H_{2d-i}^M(V, \Z/n(d-j)) \to H^i_{et, c}(V, \Z/n(j))
\end{equation}
for any $i, j \in \Z_{\geq 0}$ and $n \in \Z_{>0}$,
which is functorial in $V$.
If $V$ is projective, 
$c^{i, j}_{V, n}$ coincides with the map
\eqref{cetale}
under the identification \eqref{h-ch}
and
$H^i_{et, c}(V, \Z/n(j)) = H^i_{et}(V, \Z/n(j)).$
The maps $c_{X, n}^{i, j}$ are
functorial with respect to the sequences \eqref{gysin}
and the Gysin sequence in etale cohomology.
By taking the inductive limit, we also have
\[
c^{i, j}_{V, \infty} : 
 H_{2d-i}^M(V, \Q/\Z(d-j)) \to H^i_{et, c}(V, \Q/\Z(j)).
\]
The source of the map $c^{2d+r, d+r}_{V, n}$ 
can be identified with
$C_r(V)/n$ by Theorem \ref{cg-h}.

\begin{remark}
In \cite[Proposition 3.1 and p. 26 Remarks (b)]{ss2},
this homomorphism is constructed
when $k$ is a finite field.
In \cite[p. 26 Remarks (a)]{ss2},
it is pointed out that the same construction works
over any perfect field,
by using relative Poincar\'e duality
instead of its absolute version.
When $k$ is a $p$-adic field,
one can also use the absolute version
(based on $H^2_{\Gal}(k, \Z/n\Z(1)) \cong \Z/n$).
\end{remark}

\begin{proposition}\label{bydimension}
Let $V$ be a smooth variety over $F$ of dimension $d$,
and let $i, j \in \Z_{\geq 0}, n \in \Z_{>0}$.
If $i \leq j$ or $j \geq d+ \cd F$ then
$c^{i, j}_{V, n}$ is an isomorphism.
If $i=j+1$, then
$c^{i, j}_{V, n}$ is an injection.
\end{proposition}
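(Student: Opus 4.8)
The plan is to reduce everything to Theorem \ref{bigth}(2), which already settles the projective case, and then to propagate the conclusion to arbitrary smooth $V$ by a dévissage along a stratified boundary, using the asserted compatibility of $c^{i,j}_{V,n}$ with the localization sequences. Since $H^M_*$, $H^*_{et,c}$ and $c^{i,j}_{V,n}$ are all additive in $V$, I would first reduce to $V$ irreducible, hence of pure dimension $d$. When $V$ is projective, the discussion preceding the proposition identifies $c^{i,j}_{V,n}$ with the comparison map \eqref{cetale} of degree $i$ and weight $j$, via \eqref{h-ch} and $H^i_{et,c}=H^i_{et}$; Theorem \ref{bigth}(2) then gives exactly what is wanted, namely an isomorphism when $i\le j$ or $j\ge d+\cd F$ and an injection when $i=j+1$. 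This serves both as the projective statement and as the base case ($d=0$, where $V$ is a finite set of spectra of fields) of an induction on $d$.

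For the inductive step I would assume the proposition for all smooth varieties of dimension $<d$ and take an irreducible smooth $V$ of dimension $d$. Using $\Char F=0$, choose a smooth projective compactification $\bar V\supseteq V$ and stratify $\bar V\setminus V$ into smooth locally closed subvarieties, obtaining a chain $\bar V=U_{-1}\supseteq U_0\supseteq\cdots\supseteq U_m=V$ of open subvarieties of $\bar V$ in which each $U_s=U_{s-1}\setminus W_s$ is obtained by deleting a smooth closed $W_s\subset U_{s-1}$ of some codimension $c\ge1$, so that $\dim W_s=d-c<d$. One then induces on $s$, the case $s=-1$ being the projective case above.

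The heart is a five-lemma comparison. By the stated functoriality of $c^{i,j}_{V,n}$ with respect to \eqref{gysin} and the étale Gysin sequence, the motivic localization sequence for $(U_{s-1},W_s,U_s)$ and the compactly supported localization sequence assemble into a commutative ladder whose vertical arrows are the maps $c$. Matching indices through \eqref{gysin}, the source $H^M_{2d-i}(U_s,\Z/n(d-j))$ of $c^{i,j}_{U_s,n}$ sits in an exact sequence flanked by the sources of $c^{i-1,j}_{W_s,n}$ and $c^{i,j}_{U_{s-1},n}$: the $W_s$-terms acquire the degree shift $i\mapsto i-1$ but keep the weight $j$, because the codimension-$c$ twist $j\mapsto j-c$ in \eqref{gysin} exactly cancels the drop $\dim=d\mapsto d-c$. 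Now under $i\le j$ one has $i-1\le j$; under $j\ge d+\cd F$ one has $j\ge(d-c)+\cd F$; and $i=j+1$ gives $i-1\le j$. Hence every $W_s$-map entering the comparison is an isomorphism by the dimension induction, every $U_{s-1}$-map is an isomorphism (resp.\ injection) by the induction on $s$, and the five lemma (resp.\ the four lemma in the injectivity case $i=j+1$) yields the assertion for $U_s$, completing both inductions.

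The main obstacle is precisely the index bookkeeping: one must check that the degree and weight shifts built into \eqref{gysin} carry the hypotheses ``$i\le j$'' and ``$j\ge d+\cd F$'' from $U_s$ to exactly those maps of the lower-dimensional center $W_s$ and of the previous stratum $U_{s-1}$ for which they are already available. What makes the induction close is the favorable monotonicity — lowering the dimension only weakens the bound $j\ge\dim+\cd F$, while the condition $i\le j$ is dimension-free and is preserved by the shift $i\mapsto i-1$ — so that no case slips outside the range controlled by the two inductive hypotheses. Verifying that the two localization sequences line up into the correct commutative ladder, together with the additivity reduction to the irreducible case, is the only delicate point; the diagram chases themselves are routine.
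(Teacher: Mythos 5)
Your proposal is correct and follows essentially the same route as the paper, which disposes of the proposition in one line: induction on $\dim V$ together with the localization sequences \eqref{gysin} reduces to the projective case, where Theorem \ref{bigth}(2) applies via \eqref{h-ch}. Your index bookkeeping (the twist $j\mapsto j-c$ cancelling the dimension drop, so the $W_s$-terms carry degree $i-1$ and the same weight $j$) is exactly the verification the paper leaves implicit.
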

\begin{proof}
By induction on $\dim V$ and \eqref{gysin},
this can be reduced to the case where $V$ is projective,
which is proved in Theorem \ref{bigth}.
\end{proof}

\subsection{Curves}
Let $X$ be a smooth projective irreducible curve over $F$,
and let $U$ be an open dense subscheme of $X$.
Put $Z = X \setminus U$.

\begin{lemma}\label{cv-lemma}
Let $n \in \Z_{>0} \cup \{ \infty \}$.
\begin{enumerate}
\item
We have $\ker(c^{2, 1}_{U, n})=0$.
The map $\coker(c^{2, 1}_{U, n}) \to \coker(c^{2, 1}_{X, n})$
is injective.
\item
We have $\ker(c^{3, 2}_{U, n})=0$.
If $\cd F \leq 2$,
then we have $\coker(c^{3, 2}_{U, n}) \cong \coker(c^{3, 2}_{X, n})$.
\end{enumerate}
\end{lemma}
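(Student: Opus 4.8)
The plan is to compare the motivic localization sequence \eqref{gysin} for the closed immersion $Z \hookrightarrow X$ (of codimension $1$) with the étale localization sequence with compact support
\[ \cdots \to H^{i-1}_{et}(Z, \Z/n(j)) \to H^i_{et,c}(U, \Z/n(j)) \to H^i_{et}(X, \Z/n(j)) \to H^i_{et}(Z, \Z/n(j)) \to \cdots, \]
where properness of $X$ and of $Z$ has been used to replace compact-support cohomology by ordinary cohomology on them. By the functoriality of the maps $c^{i,j}_{-,n}$ with respect to these two sequences, I obtain a commutative ladder, and the whole argument is a pair of four-lemma and snake-lemma chases on it. I treat $n \in \Z_{>0}$; the case $n=\infty$ then follows by passing to the filtered inductive limit, which is exact and commutes with kernels and cokernels. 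The preliminary computations I would record first are that the outlying motivic homology groups of the $0$-dimensional $Z$ vanish by Lemma \ref{bydimension0}(2) (namely $H_{-2}^M(Z,\Z/n(-1))=0$ for (1) and $H_{-3}^M(Z,\Z/n(-2))=0$ for (2)), so that the open-immersion maps $H_0^M(U)\to H_0^M(X)$, resp. $H_{-1}^M(U)\to H_{-1}^M(X)$, are surjective, and that all vertical maps over $X$ and over $Z$ are governed by Proposition \ref{bydimension}.

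For part (1) the four relevant columns are $c^{1,1}_{X,n}$, $c^{1,1}_{Z,n}$, $c^{2,1}_{U,n}$, $c^{2,1}_{X,n}$. By Proposition \ref{bydimension}, $c^{1,1}_{X,n}$ and $c^{1,1}_{Z,n}$ are isomorphisms (the case $i\le j$; over $Z$ this is just the Kummer isomorphism $\bigoplus_{z\in Z} F(z)^*/n \cong H^1_{et}(Z,\Z/n(1))$), while $c^{2,1}_{X,n}$ is injective (the case $i=j+1$). The monomorphism four lemma then gives $\ker(c^{2,1}_{U,n})=0$. For the cokernel claim I would use the commuting square, which is part of the ladder, formed by the open-immersion maps $H_0^M(U)\to H_0^M(X)$ and $H^2_{et,c}(U)\to H^2_{et}(X)$ together with the verticals $c^{2,1}_{U,n}$, $c^{2,1}_{X,n}$. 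Given a class of $H^2_{et,c}(U)$ whose image lies in $\Image(c^{2,1}_{X,n})$, surjectivity of $H_0^M(U)\to H_0^M(X)$ lets me subtract an element of $\Image(c^{2,1}_{U,n})$ so that the remainder dies in $H^2_{et}(X)$; by exactness of the bottom row it comes from $H^1_{et}(Z,\Z/n(1))$, and surjectivity of $c^{1,1}_{Z,n}$ together with commutativity puts it back into $\Image(c^{2,1}_{U,n})$. Hence the map on cokernels is injective.

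Part (2) is formally the same one degree higher: the columns are $c^{2,2}_{X,n}$ and $c^{2,2}_{Z,n}$ (isomorphisms, case $i\le j$), $c^{3,2}_{U,n}$, and $c^{3,2}_{X,n}$ (injective, case $i=j+1$). The identical four-lemma chase yields $\ker(c^{3,2}_{U,n})=0$, and the identical cokernel chase yields injectivity of $\coker(c^{3,2}_{U,n})\to\coker(c^{3,2}_{X,n})$. The sole extra ingredient, and the only place $\cd F\le 2$ enters, is surjectivity of this cokernel map. From exactness of the bottom étale sequence, $\coker[H^3_{et,c}(U)\to H^3_{et}(X)]$ injects into $H^3_{et}(Z,\Z/n(2))=\bigoplus_{z\in Z} H^3(F(z),\mu_n^{\otimes 2})$; since open subgroups of a profinite group share its cohomological dimension, $\cd F(z)\le\cd F\le 2$ forces this group to vanish. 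Thus $H^3_{et,c}(U)\to H^3_{et}(X)$ is surjective, the cokernel map is onto, and combined with injectivity we get the isomorphism $\coker(c^{3,2}_{U,n})\cong\coker(c^{3,2}_{X,n})$.

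I expect no genuine obstacle here, since every structural input (the two localization sequences, the functoriality of the $c^{i,j}_{-,n}$, Proposition \ref{bydimension} and Lemma \ref{bydimension0}) is already available. The only delicate point is bookkeeping: fixing the bidegrees $(i,j)$ so that the motivic and étale sequences line up under the $c^{i,j}_{-,n}$, and verifying that the $Z$-terms either vanish via Lemma \ref{bydimension0}(2) or degenerate to the Kummer/norm-residue isomorphisms via Proposition \ref{bydimension}. Everything else is a routine diagram chase, the single genuine distinction between the conclusions of (1) and (2) being the cohomological-dimension vanishing of $H^3_{et}(Z,\Z/n(2))$.
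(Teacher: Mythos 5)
Your proof is correct and follows essentially the same route as the paper: the same commutative ladder comparing the Gysin sequence \eqref{gysin} with the compact-support étale localization sequence, with the outer verticals controlled by Proposition \ref{bydimension}, the surjectivity on the right of the top row coming from Lemma \ref{bydimension0}, and the vanishing of $H^3_{et}(Z,\Z/n(2))$ under $\cd F\leq 2$ giving the cokernel isomorphism in (2). The paper merely leaves the diagram chases implicit where you spell them out.
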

\begin{proof}
We consider a commutative diagram with exact rows
\[
\begin{matrix}
H_1^M(X, \Z/n(0)) & \to & C_1(Z)/n & \to & C_0(U)/n & \to & C_0(X)/n &\to 0
\\
\downarrow^{c^{1,1}_{X,n}} & &  
\downarrow^{c^{1,1}_{Z,n}} & &  
\downarrow^{c^{2,1}_{U,n}} & &  
\downarrow^{c^{2,1}_{X,n}} &
\\
H^1_{et}(X, \Z/n(1)) & \to & 
H^1_{et}(Z, \Z/n(1)) & \overset{(*)}{\to} & 
H^2_{et, c}(U, \Z/n(1)) & \to & 
H^2_{et}(X, \Z/n(1)) & 
\end{matrix}
\]
The two left vertical maps are bijective
and two right vertical maps are injective
by Proposition \ref{bydimension},
and (1) follows.
The proof of (2) is similar.
%
\end{proof}

\subsection{Surfaces}
Let $X$ be a smooth projective irreducible surface over $F$.
Let $V \subset X$ be an open subvariety 
such that $\dim(X \setminus V)=0$.
Let $U \subset V$ be an open subvariety 
such that $V \setminus U$ is a (not necessary connected) smooth curve.
Given a smooth surface $U$, one can always find such $V$ and $X$.
Let $C$ be the smooth compactification of $V \setminus U$
(that is, the normalization of $X \setminus U$).

\begin{proposition}\label{keyprop}
Let $n \in \Z_{>0} \cup \{ \infty \}$.
\begin{enumerate}
\item
There is a canonical homomorphism
\[ \eta_n : \coker(c^{3, 2}_{V, n}) 
    \to \coker(c^{3, 2}_{V \setminus U, n}), 
\]
for which there are exact sequences
\begin{gather*}
\ker(\eta_n) 
\overset{(*)}{\to} 
\ker(c^{4, 2}_{U, n}) \to \ker(c^{4, 2}_{X, n}),
\\
\coker(\eta_n) 
\overset{(**)}{\to} 
\coker(c^{4, 2}_{U, n}) \to \coker(c^{4, 2}_{V, n}).
\end{gather*}
If $H^3_{et, c}(V, \Z/n(2)) \to H^3_{et, c}(V \setminus U, \Z/n(2))$
is injective, then $(*)$ is injective too.
If $c^{4, 2}_{X, n}$ is injective, then $(**)$ is injective too.
If $\cd F \leq 2$,
then we have
$\coker(c^{3, 2}_{V \setminus U, n}) \cong \coker(c^{3, 2}_{C, n}),$
$\coker(c^{3, 2}_{V, n}) \cong \coker(c^{3, 2}_{X, n})$ and
$\coker(c^{4, 2}_{V, n}) \cong \coker(c^{4, 2}_{X, n})$
(i.e., one can replace $V \setminus U$ and $V$ by $C$ and $X$
in the above sequences).
\item
Suppose $F$ is a finite extension of $\Q_p$.
Then there is an exact sequence
\begin{align*}
\coker(c^{4, 3}_{X, n}) \overset{}{\to} 
\ker(c^{5, 3}_{U, n}) \to \ker(c^{5, 3}_{X, n}),
\end{align*}
and we have
$\coker(c^{5, 3}_{U, n}) \cong \coker(c^{5, 3}_{X, n})$.
\end{enumerate}
\end{proposition}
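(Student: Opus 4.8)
The plan is to realise both maps in each statement as the comparison of two localization sequences via the Schmidt--Spiess maps $c^{i,j}_{-,n}$. For the open immersion $U\hookrightarrow V$ with smooth closed complement $W:=V\setminus U$ (a curve, of codimension $1$ in $V$), the motivic Gysin sequence \eqref{gysin} and the localization sequence for \'etale cohomology with compact support are linked by a commutative ladder with exact rows, by the functoriality of $c^{i,j}_{-,n}$ recalled above. Using Theorem \ref{cg-h}, \eqref{mcandk} and Lemma \ref{bydimension0} to identify the motivic terms, the strand computing $c^{\ast,2}$ has top row
\[
H_1^M(V,\Z/n(0)) \to C_1(W)/n \to C_0(U)/n \to C_0(V)/n \to 0,
\]
the right-hand $0$ being $H_{-2}^M(W,\Z/n(-1))$, which vanishes by Lemma \ref{bydimension0}(2), with vertical maps $c^{3,2}_{V,n},c^{3,2}_{W,n},c^{4,2}_{U,n},c^{4,2}_{V,n}$; similarly the strand computing $c^{\ast,3}$ has top row $H_0^M(V,\Z/n(-1)) \to C_2(W)/n \to C_1(U)/n \to C_1(V)/n \to 0$ with vertical maps $c^{4,3}_{V,n},c^{4,3}_{W,n},c^{5,3}_{U,n},c^{5,3}_{V,n}$.

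The structural inputs come from Proposition \ref{bydimension}. In the first strand $3=2+1$, so $c^{3,2}_{V,n}$ and $c^{3,2}_{W,n}$ (the latter componentwise on $W$) are injective; this lets me run a diagram chase on the four-column ladder. The map induced on cokernels by $H^3_{et,c}(V,\Z/n(2))\to H^3_{et,c}(W,\Z/n(2))$ is $\eta_n$, and exactness of the rows together with these injectivities and the surjectivity of $C_0(U)/n\to C_0(V)/n$ produces the cokernel sequence $\coker(\eta_n)\to\coker(c^{4,2}_{U,n})\to\coker(c^{4,2}_{V,n})$ and the kernel sequence $\ker(\eta_n)\to\ker(c^{4,2}_{U,n})\to\ker(c^{4,2}_{V,n})$; well-definedness of $(*)$ uses the injectivity of $c^{3,2}_{W,n}$. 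The conditional injectivities then fall out of the same chase: $(*)$ is injective once $H^3_{et,c}(V,\Z/n(2))\to H^3_{et,c}(W,\Z/n(2))$ is, and $(**)$ once $c^{4,2}_{V,n}$ (equivalently $c^{4,2}_{X,n}$) is.

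To replace $V\setminus U$ by $C$ and $V$ by $X$, I would compare along the Gysin sequences for the $0$-dimensional $S:=X\setminus V$ (codimension $2$), and for $W\subset C$ invoke Lemma \ref{cv-lemma}(2). Since $\dim S=0$, the relevant motivic $S$-terms vanish by Lemma \ref{bydimension0}, giving isomorphisms such as $C_0(V)/n\cong C_0(X)/n$ and $C_1(V)/n\cong C_1(X)/n$; in particular $\ker(c^{4,2}_{V,n})\hookrightarrow\ker(c^{4,2}_{X,n})$ and $\ker(c^{5,3}_{V,n})\hookrightarrow\ker(c^{5,3}_{X,n})$, so the kernel sequences can be rewritten with $X$ unconditionally. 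The cokernel identifications instead require the \'etale cohomology of $S$ to vanish in the appropriate degrees, which holds precisely when $\cd F\le 2$; where a low-degree group such as $H^2_{et,c}(S,\Z/n(2))$ survives, one uses in addition that $c^{2,2}_{S,n}$ is an isomorphism (Proposition \ref{bydimension}, case $i\le j$), so that the connecting maps match and the surviving classes already lie in the image of $c^{3,2}_{V,n}$.

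Part (2) is the same chase specialised to the $c^{\ast,3}$ strand over a $p$-adic $F$, where $\cd F=2$. Now $3\ge\dim W+\cd F$, so Proposition \ref{bydimension} makes $c^{4,3}_{W,n}$ an isomorphism and, for dimension reasons, $H^5_{et,c}(W,\Z/n(3))=0$. The ladder therefore degenerates: since $\coker(c^{4,3}_{W,n})=0$ the analogue of $\eta_n$ has target $0$ and kernel all of $\coker(c^{4,3}_{V,n})$, collapsing the kernel sequence to $\coker(c^{4,3}_{V,n})\to\ker(c^{5,3}_{U,n})\to\ker(c^{5,3}_{V,n})$, while the injectivity coming from $\coker(c^{4,3}_{W,n})=0$ and the surjectivity coming from $H^5_{et,c}(W,\Z/n(3))=0$ together give $\coker(c^{5,3}_{U,n})\cong\coker(c^{5,3}_{V,n})$. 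Passing to $X$ as above (all needed \'etale $S$-groups vanish because $\cd F=2$) turns these into the asserted sequence $\coker(c^{4,3}_{X,n})\to\ker(c^{5,3}_{U,n})\to\ker(c^{5,3}_{X,n})$ and the isomorphism $\coker(c^{5,3}_{U,n})\cong\coker(c^{5,3}_{X,n})$. I expect the main obstacle to be organisational rather than conceptual: pinning down every index through Theorem \ref{cg-h}, checking that the two localization sequences really align term-by-term under $c^{i,j}_{-,n}$, and isolating exactly where $\cd F\le 2$ (part (1)) and the $p$-adic hypothesis (part (2)) enter, namely in the vanishing of the \'etale cohomology of $S$ and of $H^5_{et,c}(W,\Z/n(3))$.
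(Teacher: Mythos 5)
Your proposal is correct and follows essentially the same route as the paper: the same two four-column ladders comparing the motivic Gysin sequence with the localization sequence for \'etale cohomology with compact support, injectivity of the two left vertical maps from Proposition \ref{bydimension}, a diagram chase, and the same auxiliary comparisons along $S=X\setminus V$ (the paper's Lemmas \ref{aux1}, \ref{aux2}) and Lemma \ref{cv-lemma} to replace $V$ and $V\setminus U$ by $X$ and $C$. The only (immaterial) divergence is in part (2), where you obtain the bijectivity of $c^{4,3}_{V\setminus U,n}$ directly from Proposition \ref{bydimension} via $3\geq\dim(V\setminus U)+\cd F$, while the paper cites Proposition \ref{high2} for the same fact.
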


Before we give a proof of this proposition,
we record a well-known lemma
which describes the kernel and cokernel of $c^{i, j}_{X,n}$.
We set $\mathcal{H}^{i}(\Z/n(j)) := R^i \pi_* \mu_n^{\otimes j}$
for $n \in \Z_{>0} \cup \{ \infty \}$ and $i, j \in \Z$.
(See Theorem \ref{bigth} for the definition of $\pi$.)

\begin{lemma}\label{unram}
\begin{enumerate}
\item
Let $n \in \Z_{>0} \cup \{ \infty \}.$
There is an exact sequences
\begin{align*}
0 \to &H^M_1(X, \Z/n(0)) 
\overset{c_{X, n}^{3, 2}}{\to}
H^3_{et}(X, \Z/n\Z(2)) 
\\
\to 
&H^0_{\Zar}(X, \mathcal{H}^3(\Z/n(2)))
\to 
C_0(X)/n 
\overset{c_{X, n}^{4, 2}}{\to} 
H^4_{et}(X, \Z/n(2)).
\end{align*}
\item
Let $n \in \Z_{>0} \cup \{ \infty \}.$
There is an exact sequence
\begin{align*}
0 &\to 
H_0^M(X, \Z/n(-1))
\overset{c^{4, 3}_{X, n}}{\to}
H^4_{et, c}(X, \Z/n(3)) \to
H^0_{\Zar}(X, \mathcal{H}^4(\Z/n(3)))
\\
&\to C_1(X)/n 
\overset{c^{5, 3}_{X, n}}{\to}
H^5_{et, c}(X, \Z/n(3)) \to
H^1_{\Zar}(X, \mathcal{H}^4(\Z/n(3)))
\to 0.
\end{align*}
\item
Let $n \in \Z_{>0}.$
The canonical map
$H^0_{\Zar}(X, \mathcal{H}^i(\Z/n(i-1)))
\to H^0_{\Zar}(X, \mathcal{H}^i(\Q/\Z(i-1)))[n]$
is bijective for $i=3, 4$.
\end{enumerate}
\end{lemma}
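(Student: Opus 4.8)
The plan is to realise all three exact sequences as shadows of a single distinguished triangle of Zariski sheaves on $X$. By Theorem \ref{bigth}(2) the comparison map \eqref{cetale} is, on hypercohomology, induced by the canonical morphism $\Z(j)_X\otimes^{\mathbb{L}}\Z/n\cong\tau_{\leq j}R\pi_*\mu_n^{\otimes j}\to R\pi_*\mu_n^{\otimes j}$, whose cone is $\tau_{>j}R\pi_*\mu_n^{\otimes j}$. Since $X$ is projective, I identify $c^{i,j}_{X,n}$ with \eqref{cetale} via \eqref{h-ch} and $H^i_{et,c}(X,-)=H^i_{et}(X,-)$; writing $d=\dim X=2$, the associated hypercohomology long exact sequence reads
\[ \cdots\to H^i_M(X,\Z/n(j))\xrightarrow{c^{i,j}_{X,n}}H^i_{et}(X,\mu_n^{\otimes j})\to\mathbb{H}^i(X,\tau_{>j}R\pi_*\mu_n^{\otimes j})\to H^{i+1}_M(X,\Z/n(j))\to\cdots. \]
Everything then reduces to computing the defect terms $\mathbb{H}^i(X,\tau_{>j}R\pi_*\mu_n^{\otimes j})$. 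The case $n=\infty$ I would handle by passing to the inductive limit over $n$, under which both the triangle and Theorem \ref{bigth} are compatible.

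For parts (1) and (2) I would evaluate these defect terms with the coniveau spectral sequence $E_2^{p,q}=H^p_{\Zar}(X,\mathcal{H}^q(\Z/n(j)))\Rightarrow\mathbb{H}^{p+q}(X,\tau_{>j}R\pi_*\mu_n^{\otimes j})$, in which only $q>j$ occur and, since $d=2$ forces $H^{\geq 3}_{\Zar}(X,-)=0$, only $p\leq 2$ survive. For $j=2$ the sole entry in total degree $3$ is $E_2^{0,3}=H^0_{\Zar}(X,\mathcal{H}^3(\Z/n(2)))$, with no differential entering or leaving it; rewriting the two source groups as $H^3_M(X,\Z/n(2))$ and $H^4_M(X,\Z/n(2))$ by \eqref{h-ch} and Theorem \ref{cg-h} yields part (1). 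For $j=3$ the survivors are $E_2^{0,4}$ in degree $4$ and $E_2^{1,4},E_2^{0,5}$ in degree $5$; here $E_2^{0,5}=H^0_{\Zar}(X,\mathcal{H}^5(\Z/n(3)))$ vanishes because its global sections inject into $H^5_{et}(F(X),\mu_n^{\otimes 3})$, which is zero once $\cd F\leq 2$ (so $\cd F(X)\leq\cd F+2\leq 4$), and the trailing $0$ in the sequence comes from $H^6_M(X,\Z/n(3))=0$ by Lemma \ref{bydimension0}(2) via \eqref{h-ch}. Translating the two sources as $H^4_M(X,\Z/n(3))$ and $H^5_M(X,\Z/n(3))$ gives part (2).

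For part (3) I would argue entirely at the sheaf level. The Kummer sequence $0\to\mu_n^{\otimes(i-1)}\to\Q/\Z(i-1)\xrightarrow{n}\Q/\Z(i-1)\to 0$ and $R\pi_*$ produce the short exact sequence of Zariski sheaves
\[ 0\to\mathcal{H}^{i-1}(\Q/\Z(i-1))/n\to\mathcal{H}^i(\Z/n(i-1))\to\mathcal{H}^i(\Q/\Z(i-1))[n]\to 0, \]
and the map in question is $H^0_{\Zar}$ of the second arrow. Because the twist equals the degree in the first sheaf, the Bloch-Kato theorem in its Gersten form ($\mathcal{H}^q(\Z/m(q))\cong\mathcal{K}^M_q/m$, which also underlies Theorem \ref{bigth}) identifies $\mathcal{H}^{i-1}(\Q/\Z(i-1))\cong\mathcal{K}^M_{i-1}\otimes\Q/\Z$; being of the form $(-)\otimes\Q/\Z$ this sheaf is divisible, so its reduction mod $n$ vanishes. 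Hence the second arrow is already an isomorphism of sheaves, and applying $H^0_{\Zar}$ (which commutes with the $n$-torsion functor) settles part (3) for $i=3,4$.

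The main obstacle is the bookkeeping for the spectral-sequence degeneration: in each relevant bidegree I must verify that no $d_r$ enters or leaves the surviving entry, and this is exactly where the two numerical inputs $d=2$ and $\cd F\leq 2$ are consumed. The only genuinely external ingredient is the Bloch-Kato identification of $\mathcal{H}^q(\Z/m(q))$ with $\mathcal{K}^M_q/m$ in part (3); everything else is formal manipulation of the triangle together with the vanishing ranges already recorded in Lemma \ref{bydimension0} and Theorem \ref{bigth}.
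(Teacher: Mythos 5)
Your argument is correct and is essentially the paper's own proof written out in full: the paper disposes of (1) and (2) by citing \eqref{h-ch} and Bloch--Ogus theory, which is precisely the truncation triangle of Theorem \ref{bigth}(2) together with the hypercohomology/coniveau spectral sequence you run, and it disposes of (3) by citing the Bloch--Kato conjecture, which is exactly your divisibility argument for $\mathcal{H}^{i-1}(\Q/\Z(i-1))\cong\mathcal{K}^M_{i-1}\otimes\Q/\Z$. The only details you make explicit that the paper leaves implicit are the vanishing of $\mathcal{H}^5(\Z/n(3))$ (via $\cd F\le 2$) and of $H^6_M(X,\Z/n(3))$, both needed for the tail of the sequence in (2), and both of which you justify correctly.
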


\begin{proof}
Since $X$ is projective, 
(1) and (2) follow from \eqref{h-ch}
and Bloch-Ogus theory.
(3) is a consequence of Bloch-Kato conjecture.
\end{proof}

%

\noindent
{\it Proof of Proposition \ref{keyprop} (1).}
We have a commutative diagram with exact rows:
\[
\begin{matrix}
H_1^{M}(V, \Z/n\Z(0))& \to 
C_1(V \setminus U)/n & \to 
C_0(U)/n & \to 
C_0(V)/n  \to 0
\\
\downarrow^{c_{V, n}^{3, 2}} & 
\downarrow^{c_{V \setminus U, n}^{3, 2}} & 
\downarrow^{c_{U, n}^{4, 2}} & 
\downarrow^{c_{V, n}^{4, 2}} 
\\
H^3_{et, c}(V, \Z/n\Z(2))& \to 
H^3_{et, c}(V \setminus U, \Z/n\Z(2))& \to 
H^4_{et, c}(U, \Z/n\Z(2))& \to 
H^4_{et, c}(V, \Z/n\Z(2)). 
\end{matrix}
\]
Proposition \ref{bydimension} shows 
the injectivity of $c_{V, n}^{3, 2}$ 
and $c_{V \setminus U, n}^{3, 2}$.
If $\cd F \leq 2,$ then
we have
$\coker(c_{V \setminus U, n}^{3, 2}) \cong \coker(c_{C, n}^{3, 2})$
by Lemma \ref{cv-lemma}.
Using the following lemma,
a diagram chase completes the proof.
\qed

\begin{lemma}\label{aux1}
The map
$\ker(c_{V, n}^{4, 2}) \to \ker(c_{X, n}^{4, 2})$
is injective.
If $\cd F \leq 2$, 
$\coker(c_{V, n}^{i, 2}) \to \coker(c_{X, n}^{i, 2})$
is bijective for $i=3, 4$.
\end{lemma}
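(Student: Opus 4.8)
The plan is to compare the Gysin (localization) sequences for the pair $(X,V)$ on the motivic and the etale side, linked by the natural transformation $c^{i,j}_{-,n}$. Write $Z = X \setminus V$. By hypothesis $Z$ is a reduced closed subscheme of dimension $0$, i.e.\ a finite disjoint union of spectra of finite field extensions of $F$ (separable, as $\Char F = 0$), hence smooth of pure codimension $c = 2$ in the surface $X$. First I would invoke \eqref{gysin} (with $j=0$, $c=2$) together with the Gysin sequence in etale cohomology with compact support, and the stated functoriality of $c^{i,j}_{-,n}$ with respect to both, to obtain a commutative ladder whose two rows are these localization sequences and whose vertical arrows are the maps $c^{i,2}_{-,n}$. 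The maps $\ker(c^{4,2}_{V,n}) \to \ker(c^{4,2}_{X,n})$ and $\coker(c^{i,2}_{V,n}) \to \coker(c^{i,2}_{X,n})$ in the statement are exactly those induced on kernels and cokernels by the squares of this ladder.

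Next I would record the vanishing of the terms contributed by $Z$. Since $\dim Z = 0$, the duality \eqref{h-ch} identifies $H_{m}^M(Z, \Z/n(-2))$ with $\bigoplus_{x \in Z} H^{-m}_M(\Spec F(x), \Z/n(2))$, and the latter vanishes for $-m > 2$ because the motivic complex $\Z(2)$ is concentrated in degrees $\le 2$. In particular $H_{-3}^M(Z, \Z/n(-2)) = H_{-4}^M(Z, \Z/n(-2)) = 0$. Reading off \eqref{gysin} at the two relevant spots then shows that $H_0^M(V, \Z/n(0)) \to H_0^M(X, \Z/n(0))$ (that is, $C_0(V)/n \to C_0(X)/n$ via Theorem \ref{cg-h}) is an isomorphism, while $H_1^M(V, \Z/n(0)) \to H_1^M(X, \Z/n(0))$ is surjective.

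Part (a) is then immediate and needs no hypothesis on $\cd F$: in the square computing $c^{4,2}$ the top arrow $H_0^M(V, \Z/n(0)) \to H_0^M(X, \Z/n(0))$ is an isomorphism, and an isomorphism restricts to an injection on the kernels of the two vertical arrows.

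For part (b) I would assume $\cd F \le 2$. Each residue field $F(x)$ is a finite extension of $F$, so also has cohomological dimension $\le 2$, whence $H^i_{et}(Z, \Z/n(2)) = 0$ for $i \ge 3$; the etale localization sequence then gives that $H^4_{et, c}(V, \Z/n(2)) \to H^4_{et}(X, \Z/n(2))$ is an isomorphism and $H^3_{et, c}(V, \Z/n(2)) \to H^3_{et}(X, \Z/n(2))$ is surjective. For $i=4$ both horizontal arrows of the square are isomorphisms, so $\coker(c^{4,2}_{V,n}) \cong \coker(c^{4,2}_{X,n})$. For $i=3$ the horizontal arrows are only surjective, and I would finish with the elementary observation that in a commutative square with surjective horizontal arrows the induced map on the cokernels of the vertical arrows is an isomorphism: surjectivity of the bottom arrow gives surjectivity, and lifting along the surjective top arrow gives injectivity. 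The only point requiring care is the bookkeeping of degrees and twists needed to line up \eqref{gysin} with the etale Gysin sequence and to confirm the vanishing of the $Z$-terms; once the ladder is set up, both assertions are formal, the $i=3$ cokernel being the sole (routine) diagram chase.
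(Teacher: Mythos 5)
Your setup is exactly the paper's intended route (the paper's proof is a one-line citation of Propositions \ref{bydimension0}, \ref{bydimension} and \eqref{gysin}): compare the two localization ladders via the functoriality of $c^{i,j}_{-,n}$, kill the $Z$-terms on the motivic side using that $Z$ is a smooth zero-dimensional closed subscheme and that $\Z(2)$ on a point has no cohomology above degree $2$, and kill the $Z$-terms in degree $\geq 3$ on the \'etale side using $\cd F(x) \leq \cd F \leq 2$. Part (a) and the $i=4$ case of (b) are correct as you argue them.

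There is, however, a genuine gap at the $i=3$ cokernel step. The ``elementary observation'' you invoke --- that a commutative square with surjective horizontal arrows induces an isomorphism on the cokernels of the vertical arrows --- is false. Run your own injectivity argument: given $b \in H^3_{et,c}(V,\Z/n(2))$ whose image in $H^3_{et}(X,\Z/n(2))$ lies in $\Image(c^{3,2}_{X,n})$, you lift the preimage along the surjection $H_1^M(V,\Z/n(0)) \twoheadrightarrow H_1^M(X,\Z/n(0))$ and reduce to an element of $\ker\bigl(H^3_{et,c}(V,\Z/n(2)) \to H^3_{et}(X,\Z/n(2))\bigr)$; to conclude you need that kernel to be contained in $\Image(c^{3,2}_{V,n})$, which is not a formal consequence of the two surjectivities (take the square with $A=A'=B'=0$ and $B=\Z$ for a counterexample to the general claim). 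The fix is to use one more term of the ladder: the kernel in question is the image of $H^2_{et}(Z,\Z/n(2))$ under the boundary map, and by Proposition \ref{bydimension} (case $i\leq j$ with $i=j=2$, $d=0$) the map $c^{2,2}_{Z,n}\colon H_{-2}^M(Z,\Z/n(-2)) \to H^2_{et}(Z,\Z/n(2))$ is an isomorphism; functoriality of $c$ with respect to the two Gysin sequences then shows that this kernel is contained in $\Image(c^{3,2}_{V,n})$, and the diagram chase closes. With that correction your argument agrees with the paper's.
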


\begin{proof}
This follows from
Proposition \ref{bydimension0}, \ref{bydimension} and \eqref{gysin}.
%
%
\end{proof}

\noindent
{\it Proof of Proposition \ref{keyprop} (2).}
We have a commutative diagram with exact rows:
\[
\begin{matrix}
H_0^{M}(V, \Z/n\Z(1))& \to 
C_2(V \setminus U)/n & \to 
C_1(U)/n & \to 
C_1(V)/n  \to 0
\\
\downarrow^{c_{V, n}^{4, 3}} & 
\downarrow^{c_{V \setminus U, n}^{4, 3}} & 
\downarrow^{c_{U, n}^{5, 3}} & 
\downarrow^{c_{V, n}^{5, 3}} 
\\
H^4_{et, c}(V, \Z/n\Z(3))& \to 
H^4_{et, c}(V \setminus U, \Z/n\Z(3))& \to 
H^5_{et, c}(U, \Z/n\Z(3))& \overset{}{\to} 
H^5_{et, c}(V, \Z/n\Z(3)) \to 0. 
\end{matrix}
\]
Proposition \ref{bydimension} shows 
the injectivity of the left two vertical maps.
In Proposition \ref{high2} below, 
we will show that
$c_{V \setminus U, n}^{4, 3}$ is bijective.
Using the following lemma,
a diagram chase completes the proof.
\qed

\begin{lemma}\label{aux2}
The map
$\ker(c_{V, n}^{5, 3}) \to \ker(c_{X, n}^{5, 3})$ 
is injective.
If $\cd F \leq 2$, 
then
$\coker(c_{V, n}^{i, 3}) \to \coker(c_{X, n}^{i, 3})$
is bijective for $i=3, 4$.
\end{lemma}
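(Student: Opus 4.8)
The plan is to compare $V$ with $X$ through the closed complement $Z := X \setminus V$. By hypothesis $\dim Z = 0$, and since $\Char F = 0$ the reduced scheme $Z$ is smooth over $F$ of pure codimension $2$ in the surface $X$, so $d := \dim V = 2$. Because the maps $c^{i,j}_{-,n}$ are functorial for the localization sequence \eqref{gysin} in motivic homology and for the localization sequence with compact supports in étale cohomology, the closed immersion $Z \hookrightarrow X$ with open complement $V$ produces a commutative ladder between these two long exact sequences. Keeping the weight $3$ throughout on the étale side and using $H^*_{et,c}(X) = H^*_{et}(X)$ and $H^*_{et,c}(Z) = H^*_{et}(Z)$ (both proper), the relevant stretch of the top row is
\[
H^M_{1-i}(Z, \Z/n(-3)) \to H^M_{4-i}(V, \Z/n(-1)) \to H^M_{4-i}(X, \Z/n(-1)) \to H^M_{-i}(Z, \Z/n(-3)),
\]
and of the bottom row is
\[
H^{i-1}_{et}(Z, \Z/n(3)) \to H^i_{et,c}(V, \Z/n(3)) \to H^i_{et}(X, \Z/n(3)) \to H^i_{et}(Z, \Z/n(3)),
\]
the two middle vertical maps being $c^{i,3}_{V,n}$ and $c^{i,3}_{X,n}$. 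Everything reduces to vanishing of the $Z$-columns in suitable degrees: since $\dim Z = 0$, Lemma \ref{bydimension0}(2) (in its $\Z/n$ form) gives $H^M_m(Z, \Z/n(-3)) = 0$ for all $m \le -4$; and since the residue fields of $Z$ have cohomological dimension $\le \cd F$, we have $H^m_{et}(Z, \Z/n(3)) = 0$ for $m > \cd F$, hence for $m \ge 3$ once $\cd F \le 2$.

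For the kernel assertion take $i = 5$. Then the left motivic $Z$-term $H^M_{-4}(Z, \Z/n(-3))$ vanishes, so the top map $H^M_{-1}(V, \Z/n(-1)) \to H^M_{-1}(X, \Z/n(-1))$ is injective; restricting it to $\ker(c^{5,3}_{V,n})$, which lands in $\ker(c^{5,3}_{X,n})$ by commutativity, gives the injectivity claimed. No hypothesis on $\cd F$ is used here.

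For the cokernel assertion we invoke $\cd F \le 2$. When $i = 3$ both $c^{3,3}_{V,n}$ and $c^{3,3}_{X,n}$ are isomorphisms by Proposition \ref{bydimension} (as $i = 3 \le j = 3$), so both cokernels vanish. When $i = 4$ the étale terms $H^3_{et}(Z, \Z/n(3))$ and $H^4_{et}(Z, \Z/n(3))$ vanish, collapsing the bottom row to an isomorphism $H^4_{et,c}(V, \Z/n(3)) \cong H^4_{et}(X, \Z/n(3))$, while the right motivic $Z$-term $H^M_{-4}(Z, \Z/n(-3))$ vanishes, so the top map $H^M_0(V, \Z/n(-1)) \to H^M_0(X, \Z/n(-1))$ is surjective. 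Commutativity of the resulting square then shows the étale isomorphism carries the image of $c^{4,3}_{V,n}$ onto that of $c^{4,3}_{X,n}$ (surjectivity of the top map is what forces equality of images), so $\coker(c^{4,3}_{V,n}) \cong \coker(c^{4,3}_{X,n})$. The same argument applied at $i = 5$ — where now both $Z$-columns vanish on each side, so both horizontal comparison maps are isomorphisms — yields $\coker(c^{5,3}_{V,n}) \cong \coker(c^{5,3}_{X,n})$ as well, the form that feeds into Proposition \ref{keyprop}(2).

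The only delicate point is the index bookkeeping: one must align the motivic sequence \eqref{gysin}, with its shift of $2c = 4$ in homological degree and $c = 2$ in weight, against the compact-support étale sequence so that $c^{i,3}_{V,n}$ and $c^{i,3}_{X,n}$ sit in the stated columns, and then verify that the vanishing ranges of the $Z$-columns land exactly where needed. The substantive input — and the sole place $\cd F \le 2$ enters — is $H^m_{et}(Z, \Z/n(3)) = 0$ for $m \ge 3$, which is what turns the étale comparison maps for the $Z$-complement into isomorphisms; the motivic $Z$-terms vanish for free because $Z$ is $0$-dimensional of weight $-3$. I expect the compatibility of $c^{i,j}_{-,n}$ with both localization sequences, recalled in \S \ref{sect-et} from \cite{ss2}, to be the one ingredient worth stating carefully, but it requires no new computation.
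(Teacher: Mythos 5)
Your proof is correct and follows essentially the same route as the paper, whose own proof is just a one-line citation of Lemma \ref{bydimension0}, Proposition \ref{bydimension} and the localization sequences \eqref{gysin}: you have simply written out the ladder comparison and the vanishing of the $Z$-terms (motivic by dimension/weight, \'etale by $\cd F\leq 2$) that those citations encode. Your additional treatment of the $i=5$ cokernel is welcome, since that is the case actually invoked in the proof of Proposition \ref{keyprop}(2) (the ``$i=3,4$'' in the statement appears to be a slip for ``$i=4,5$'').
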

\begin{proof}
This follows from
Proposition \ref{bydimension0}, \ref{bydimension} and \eqref{gysin}.
%
\end{proof}

\section{Varieties over a $p$-adic field}\label{sect-var}
In this section, $k$ is a finite extension of $\Q_p$.
Let $X$ be a smooth geometrically irreducible variety over $k$
of dimension $d$.

\subsection{The Brauer-Manin pairing}\label{subsec-bm}
By Poincar\'e duality, we have a canonical isomorphism
\[H^{2d}_{et, c}(X, \hat{\Z}(d)) \cong H^2_{et}(X, \Q/\Z(1))^*.  \]
Since $\Br(X)$ is a torsion group,
the Kummer sequence implies  an exact sequence
\[ 0 \to \Pic(X) \otimes \Q/\Z 
     \to H^2_{et}(X, \Q/\Z(1)) \to \Br(X) \to 0,
\]
hence we have an injective homomorphism
$\Br(X)^* \to H^{2d}_{et, c}(X, \hat{\Z}(d))$.
We consider a diagram
\[
\begin{matrix}
Z_0(X) & \overset{\tilde{\psi}_X}{\to} & \Br(X)^*
\\
\downarrow^{\text{surj.}}
& &
\downarrow_{\text{inj.}}
\\
C_0(X) & \to & H^{2d}_{et, c}(X, \hat{\Z}(d)),
\end{matrix}
\]
where the lower horizontal map is the composition of
\[
C_0(X) 
\to \underset{\leftarrow n}{\lim} ~C_0(X)/n
\overset{(c^{2d, d}_{X, n})_n}{\to} H^{2d}_{et, c}(X, \hat{\Z}(d)).
\]
This diagram is seen to be commutative 
by reducing to the case $\dim X=0$.
This shows 
the first part of Proposition \ref{well-def}
(the well-definedness of $\psi_X$).
The same argument shows the following simple lemma.

\begin{lemma}\label{candpsi}
For any $n \in \Z_{>0} \cup \{ \infty \}$, we have an isomorphism
$\ker(\psi_X/n) \cong \ker(c^{2d, d}_{X, n})$
and an exact sequence
\[ 0 \to \coker(\psi_X/n) \to \coker(c^{2d, d}_{X, n})
 \to (\Pic(X)/n)^* \to 0.
\]
\end{lemma}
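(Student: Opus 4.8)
The plan is to deduce both statements from a single commutative square, at a fixed finite level $n$, whose right-hand vertical arrow is injective with a computable cokernel; the isomorphism of kernels and the exact sequence of cokernels then follow by a diagram chase, and the case $n=\infty$ by passing to the colimit. I would first treat finite $n \in \Z_{>0}$.

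The first step is to produce the exact sequence
\[ 0 \to \Br(X)^*/n \to H^{2d}_{et,c}(X,\Z/n(d)) \to (\Pic(X)/n)^* \to 0. \]
I would start from the mod-$n$ Kummer sequence
\[ 0 \to \Pic(X)/n \to H^2_{et}(X,\Z/n(1)) \to \Br(X)[n] \to 0, \]
apply $\Hom(-,\Q/\Z)$, which is exact, and insert the finite-coefficient form of the Poincar\'e duality isomorphism recalled in \S\ref{subsec-bm}, namely $H^{2d}_{et,c}(X,\Z/n(d)) \cong H^2_{et}(X,\Z/n(1))^*$. To identify the outer terms I would use that $\Br(X)$ is torsion: dualizing the four-term sequence $0 \to \Br(X)[n] \to \Br(X) \overset{n}{\to} \Br(X) \to \Br(X)/n \to 0$ gives a canonical isomorphism $(\Br(X)[n])^* \cong \Br(X)^*/n$, while $(\Pic(X)/n)^*$ is literally the dual of the left-hand term. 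This is the displayed sequence.

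Next I would establish the commutativity of
\[
\begin{matrix}
C_0(X)/n & \overset{\psi_X/n}{\to} & \Br(X)^*/n \\
\| & & \downarrow \\
C_0(X)/n & \overset{c^{2d,d}_{X,n}}{\to} & H^{2d}_{et,c}(X,\Z/n(d)),
\end{matrix}
\]
where the right vertical arrow is the injection from the sequence above and the left-hand source is identified with $C_0(X)/n$ via Theorem \ref{cg-h}. This is precisely the assertion proved for $\hat\Z$-coefficients in \S\ref{subsec-bm}, now read at finite level, and I would verify it by the same argument: since both $c^{2d,d}_{X,n}$ and $\psi_X$ are assembled from their components at the points $x\in X_{(0)}$ by functoriality (pushforward along $x\hookrightarrow X$), it suffices to treat $X=\Spec k(x)$ with $d=0$, where $\Pic(X)=0$, the right vertical is the isomorphism $\Br(x)^*/n \cong H^0_{et,c}(x,\Z/n(0))$, and commutativity reduces to the compatibility of the local class field theory isomorphism $\psi_x^*:\Br(x)\cong\Q/\Z$ with the duality map $c^{0,0}_{x,n}$. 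Granting the square, injectivity of the right vertical identifies the two kernels, giving $\ker(\psi_X/n)\cong\ker(c^{2d,d}_{X,n})$; and since the image of $c^{2d,d}_{X,n}$ is exactly the image of $\psi_X/n$ transported along the injection, dividing the displayed exact sequence by this common image yields
\[ 0 \to \coker(\psi_X/n) \to \coker(c^{2d,d}_{X,n}) \to (\Pic(X)/n)^* \to 0. \]

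Finally, the case $n=\infty$ follows by taking the filtered colimit over finite $n$ of the statements just proved: filtered colimits are exact, $\coker$ commutes with all colimits, and by the conventions of \S\ref{convention} the groups $\ker(\psi_X\otimes\Q/\Z)$ and $\ker(c^{2d,d}_{X,\infty})$ are the colimits of $\ker(\psi_X/n)$ and $\ker(c^{2d,d}_{X,n})$; since $c^{2d,d}_{X,\infty}$ is by definition the colimit of the $c^{2d,d}_{X,n}$, the limit of the finite exact sequences is the asserted one. I expect the main point to be the commutative square, and within it the reduction to $\dim X=0$: everything else is formal once the dual Kummer sequence and the compatibility of $\psi_x$ with $c^{0,0}_{x,n}$ under Poincar\'e duality are in hand.
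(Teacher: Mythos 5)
Your argument is correct for finite $n$ and is essentially the paper's: the paper proves this lemma by invoking verbatim ``the same argument'' as the construction of $\psi_X$ in \S\ref{subsec-bm}, namely the dualized Kummer sequence combined with Poincar\'e duality and the reduction of the commutative square to the zero-dimensional case via the covariant functoriality of both sides. The only caveat is at $n=\infty$: your colimit produces $\varinjlim_n(\Pic(X)/n)^*$ as the third term, which need not coincide with $(\Pic(X)/\infty)^*=(\Pic(X)\otimes\Q/\Z)^*$ (for $\Pic(X)=\Z$ these are $\Q/\Z$ and $\hat{\Z}$ respectively), but this imprecision is present in the statement itself and is harmless because only the kernel isomorphism and the injectivity of $\coker(\psi_X/n)\to\coker(c^{2d,d}_{X,n})$ are used in the sequel.
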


\subsection{Class field theory}\label{subsec-cft}
By Poincar\'e duality, we have a canonical isomorphism
\[H^{2d+1}_{et, c}(X, \hat{\Z}(d+1)) 
\cong H^1_{et}(X, \Q/\Z)^*
\cong \pi_1^{ab}(X).  
\]
We consider a diagram
\[
\begin{matrix}
Z_0^1(X) & \overset{\tilde{\rho}_X}{\to} & \pi_1^{ab}(X)
\\
\downarrow^{\text{surj.}}
& &
\downarrow_{\cong}
\\
C_1(X) & \to & H^{2d+1}_{et, c}(X, \hat{\Z}(d+1)).
\end{matrix}
\]
where the lower horizontal map is the composition of
\[
C_1(X) 
\to \underset{\leftarrow n}{\lim} C_1(X)/n
\overset{(c^{2d+1, d+1}_{X, n})_n}{\to} H^{2d+1}_{et, c}(X, \hat{\Z}(d+1)).
\]
This diagram is seen to be commutative 
by reducing to the case $\dim X=0$.
This shows 
the second part of Proposition \ref{well-def}
(the well-definedness of $\rho_X$).
The same argument shows the following simple lemma.

\begin{lemma}\label{candrho}
For any $n \in \Z_{>0} \cup \{ \infty \}$, we have isomorphisms
$\ker(\rho_X/n) \cong \ker(c^{2d+1, d+1}_{X, n})$ and
$\coker(\rho_X/n) \cong \coker(c^{2d+1, d+1}_{X, n})$.
\end{lemma}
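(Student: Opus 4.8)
The plan is to produce, for each fixed $n \in \Z_{>0} \cup \{\infty\}$, a single commutative square identifying $\rho_X/n$ with $c^{2d+1, d+1}_{X, n}$ up to isomorphisms on source and target, and then simply read off the kernels and cokernels. This is the exact analogue of Lemma \ref{candpsi}, and indeed follows by ``the same argument''; the point worth emphasizing is \emph{why} the conclusion is cleaner here. In the diagram of \S\ref{subsec-cft} the right vertical map, the Poincar\'e-duality isomorphism $H^{2d+1}_{et, c}(X, \hat{\Z}(d+1)) \cong \pi_1^{ab}(X)$, is a genuine isomorphism rather than a mere injection (as was $\Br(X)^* \hookrightarrow H^{2d}_{et, c}(X, \hat{\Z}(d))$ in the Brauer case). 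Consequently no correction term of the shape $(\Pic(X)/n)^*$ intervenes, and both kernel and cokernel transfer across directly.

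First I would identify the source. By Theorem \ref{cg-h} we have $C_1(X) \cong H_{-1}^M(X, \Z(-1))$, and the identification $H_{-1}^M(X, \Z(-1))/n = H_{-1}^M(X, \Z/n(-1))$ recorded after Lemma \ref{bydimension0} (an instance of \eqref{univcoeff}) shows that $C_1(X)/n$ is exactly the source of $c^{2d+1, d+1}_{X, n}$, as already noted after \eqref{ss-map}. Next I would identify the target. Poincar\'e duality with $\Z/n$ coefficients, together with the trace isomorphism coming from $\cd k = 2$, gives $H^{2d+1}_{et, c}(X, \Z/n(d+1)) \cong H^1_{et}(X, \Z/n)^*$; since $H^1_{et}(X, \Z/n) = \Hom(\pi_1^{ab}(X), \Z/n)$, double duality for the finite quotient $\pi_1^{ab}(X)/n$ yields $H^{2d+1}_{et, c}(X, \Z/n(d+1)) \cong \pi_1^{ab}(X)/n$, compatibly with the $\hat{\Z}$-coefficient identification of \S\ref{subsec-cft}. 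For $n = \infty$ the same identification follows by passing to the inductive limit.

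With source and target identified, it remains to check that the square
\[
\begin{matrix}
C_1(X)/n & \overset{\rho_X/n}{\to} & \pi_1^{ab}(X)/n
\\
\| & & \downarrow^{\cong}
\\
C_1(X)/n & \overset{c^{2d+1, d+1}_{X, n}}{\to} & H^{2d+1}_{et, c}(X, \Z/n(d+1))
\end{matrix}
\]
commutes. Granting this, the right vertical map being an isomorphism immediately forces $\ker(\rho_X/n) \cong \ker(c^{2d+1, d+1}_{X, n})$ and $\coker(\rho_X/n) \cong \coker(c^{2d+1, d+1}_{X, n})$, which is the assertion of the lemma.

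I would establish commutativity by the same reduction to $\dim X = 0$ used for the diagram of \S\ref{subsec-cft}. Both $\rho_X$ (which by Proposition \ref{well-def} factors $\tilde{\rho}_X$) and the realization $c^{2d+1, d+1}_{X, n}$ (functorial in the variety) are compatible with the maps out of $Z_0^1(X) = \bigoplus_{x \in X_{(0)}} k(x)^*$, so it suffices to verify the square after restriction to a single closed point $x$. There $C_1(x)/n = k(x)^*/n$, the map $c^{1,1}_{x,n}$ is the Kummer isomorphism, and the claim reduces to the compatibility of the local reciprocity map $\rho_x : k(x)^* \to \pi_1^{ab}(x)$ with the cup-product/duality pairing defining $c$. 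This last compatibility, i.e. that local reciprocity is realized by the duality pairing of local class field theory, is the only genuine obstacle; everything else is bookkeeping with the universal-coefficient and Poincar\'e-duality isomorphisms assembled above.
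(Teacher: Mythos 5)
Your proposal is correct and follows essentially the same route as the paper: the paper proves this lemma by the very commutative square of \S\ref{subsec-cft} (with the Poincar\'e-duality isomorphism $\pi_1^{ab}(X) \cong H^{2d+1}_{et,c}(X,\hat{\Z}(d+1))$ on the right, checked by reduction to $\dim X = 0$), noting that ``the same argument'' that gives well-definedness of $\rho_X$ yields the identification of kernels and cokernels. Your added remarks on why no $(\Pic(X)/n)^*$ correction term appears, and on the mod-$n$ compatibility of the duality isomorphism, are accurate elaborations of what the paper leaves implicit.
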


\subsection{Higher degree}
We consider the groups $C_r(X)$ when $r \geq 2$.
Proposition \ref{high2} below shows that
$C_r(X)$ is uniquely divisible if $r \geq 3$,
and that $C_2(X) \to C_2(\Spec k) = K_2(k)$
is a surjective map with uniquely divisible kernel.
Note that 
$K_2(k)$ is the direct sum of a uniquely divisible group
and a finite group isomorphic to $\mu(k)$
(see \cite{Merkurjev}).

\begin{proposition}\label{high2}
Let $X$ be a smooth geometrically irreducible variety over $k$.
Suppose that there exists a smooth projective variety $Y$
that contains $X$ as an open dense subvariety.
Let $i, j \in \Z$ and suppose $j \leq -2$.
\begin{enumerate}
\item
If $i < -2$,
then $H^M_i(X, \Z(j))$ is uniquely divisible.
\item
The map $H^M_{-2}(X, \Z(j)) \to H^M_{-2}(\Spec k, \Z(j))$
induced by the structure morphism
is surjective with uniquely divisible kernel.
Consequently,
\[ c^{2d+2, d+2}_{X, n} : C_2(X)/n \to 
   H^{2d+2}_{et, c}(X, \Z/n(d+2))
\]
is bijective for any $n \in \Z_{>0}$.
\end{enumerate}
\end{proposition}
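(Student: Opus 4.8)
The plan is to exploit the fact that the twist $j\le -2$ places every comparison map of \S\ref{sect-et} in its isomorphism range. Writing $d=\dim X$, the map $c^{2d-i,\,d-j}_{X,n}$ of \eqref{ss-map} has \'etale twist $d-j\ge d+2=d+\cd k$, so Proposition~\ref{bydimension} gives isomorphisms
\[ H_i^M(X,\Z/n(j))\ \xrightarrow{\ \sim\ }\ H^{2d-i}_{et,c}(X,\Z/n(d-j)) \]
for every $i\in\Z$ and every $n\in\Z_{>0}\cup\{\infty\}$. Combining this with Poincar\'e duality over $k$ (in the normalization already used in \S\ref{subsec-bm}) identifies the right-hand side with $H^{i+2}_{et}(X,\Z/n(j+1))^{*}$. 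Thus the whole proposition is reduced to statements about ordinary \'etale cohomology of $X$, which I can read off from its degree.

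For part (1), if $i<-2$ then $i+2<0$, so $H^{i+2}_{et}(X,\Z/n(j+1))=0$ and hence $H_i^M(X,\Z/n(j))=0$ for all finite $n$; by \eqref{univcoeff} the group $H_i^M(X,\Z(j))$ is therefore divisible. For torsion-freeness I look one degree up, since by \eqref{univcoeff} the torsion subgroup of $H_i^M(X,\Z(j))$ is a quotient of $H_{i+1}^M(X,\Q/\Z(j))$. When $i\le -4$ this already vanishes by the same degree count. The remaining case $i=-3$ is the crucial one, and here I use that
\[ H_{-2}^M(X,\Q/\Z(j))\ \cong\ H^{0}_{et}(X,\hat\Z(j+1))^{*}\ =\ H^0(k,\hat\Z(j+1))^{*}\ =\ 0, \]
the last equality because the Galois invariants $H^0(k,\hat\Z(m))$ vanish for $m\neq 0$ and $j+1\neq 0$. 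Hence $H_{-3}^M(X,\Z(j))$ is torsion-free as well, and part (1) follows.

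For part (2) I first treat the map modulo $n$. Since $X$ is geometrically irreducible, $H^0_{et}(\bar X,\Z/n)=\Z/n$, so $\pi^{*}$ on $H^0_{et}(-,\Z/n(j+1))$ is an isomorphism; dualising through the comparison isomorphism and Poincar\'e duality (the same ``reduction to $\dim X=0$'' used for Lemmas~\ref{candpsi} and \ref{candrho}) shows that the structure morphism induces an isomorphism $\pi_*\colon H_{-2}^M(X,\Z/n(j))\xrightarrow{\sim}H_{-2}^M(\Spec k,\Z/n(j))$ for every $n\in\Z_{>0}\cup\{\infty\}$. Because $H_{-3}^M(-,\Z(j))$ is torsion-free (part (1), applied to both $X$ and $\Spec k$), \eqref{univcoeff} identifies these groups with $H_{-2}^M(-,\Z(j))/n$, so $\pi_*\otimes\Z/n$ is an isomorphism for all $n$. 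A short diagram argument with the two-term complex $[H_{-2}^M(X,\Z(j))\to H_{-2}^M(\Spec k,\Z(j))]$ then yields that its cokernel is uniquely divisible and its kernel is divisible. The final assertion of (2) is now immediate: from part (1) with $i=-3,\ j=-2$ the map $C_2(X)/n=H_{-2}^M(X,\Z(-2))/n\to H_{-2}^M(X,\Z/n(-2))$ is an isomorphism, and composing with the comparison isomorphism gives the bijectivity of $c^{2d+2,d+2}_{X,n}$.

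The hard part is upgrading ``cokernel uniquely divisible, kernel divisible'' to the exact statement ``surjective with uniquely divisible kernel'': the modulo-$n$ and $\Q/\Z$ information alone controls neither the rational cokernel $\coker(\pi_*)\otimes\Q=\coker(\pi_*\otimes\Q)$ nor the co-torsion of $\ker(\pi_*)$, both of which are invisible to $\pi_*\otimes\Z/n$. I expect this to be the main obstacle. To organize it I would induct on $d=\dim X$ using the localization sequence \eqref{gysin}: for a smooth compactification $Y\supset X$ all the Gysin correction terms $H^M_{\ast}(Z',\Z(j-c))$ coming from the strata $Z'$ of $Y\setminus X$ sit in homological degree $<-2$ and twist $\le -2$, hence are uniquely divisible by part (1); this reduces the kernel--cokernel analysis of $\pi_*$ to the projective case and ultimately to $\Spec k$, where the map is the identity. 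The genuinely arithmetic input, entering at the base of this reduction, is the structure of $H_{-2}^M(\Spec k,\Z(j))=H^2_M(k,\Z(-j))$ as the direct sum of a uniquely divisible group and a finite group (for $j=-2$ this is Merkurjev's description $K_2(k)=(\text{u.d.})\oplus\mu(k)$, see \cite{Merkurjev}): surjectivity is obtained by a norm argument from a closed point of $X$, while torsion-freeness of the kernel reflects the fact that, in the degree where the comparison map is an isomorphism, the torsion of $H_{-2}^M(X,\Z(j))$ injects into that of $H_{-2}^M(\Spec k,\Z(j))$.
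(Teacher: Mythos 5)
Your part (1) and the final assertion of (2) are correct and follow essentially the paper's own route: since $d-j\ge d+\cd k$ the comparison map $c^{2d-i,d-j}_{X,n}$ is an isomorphism, the target vanishes for $i<-2$ by cohomological dimension, and the torsion-freeness at $i=-3$ comes from $H_{-2}^M(X,\Q/\Z(j))\cong H^0_{\Gal}(k,\hat\Z(j+1))^{*}=0$. Likewise, your proof that $\pi_*\otimes\Z/n$ is bijective (by dualizing $H^0_{et}(\Spec k,\Z/n(j+1))\to H^0_{et}(X,\Z/n(j+1))$ and using the torsion-freeness of $H_{-3}^M$) and the norm argument for surjectivity are exactly the paper's claims (b) and (a).

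The gap is the unique divisibility of $\ker(\pi_*)$, which is the real content of part (2). First, your intermediate claim that mod-$n$ bijectivity for all $n$ already yields a divisible kernel is false: $\Q\to\Q/\Z$ is surjective with kernel $\Z$ and induces an isomorphism mod every $n$. You do flag that more is needed, but the strategy you propose --- Gysin induction to ``reduce to $\Spec k$'', with Merkurjev's description of $K_2(k)$ as the arithmetic input --- cannot close it: once $X$ is projective there is nothing left to localize, and the obstruction lives on the source, not in $K_2(k)$ (Merkurjev's theorem is only contextual in the paper). What the paper actually establishes are two further claims. (c) $H_{-2}^M(X,\Z(j))[n]\to H_{-2}^M(k,\Z(j))[n]$ is surjective; this follows from \eqref{univcoeff} because $H^{2d+1}_{et,c}(X,\Z/n(d-j))\to H^{1}_{\Gal}(k,\Z/n(-j))$ is dual to the pullback $H^1_{\Gal}(k,\Z/n(1+j))\to H^1_{et}(X,\Z/n(1+j))$, which is injective by the Leray spectral sequence. (d) $\ker(\pi_*)$ is torsion-free; via the Hochschild--Serre filtration on $H^{2d+1}_{et,c}(X,\Q/\Z(d-j))$ this reduces to the vanishing of $H^2_{\Gal}(k,H^{2d-1}_{et,c}(\bar X,\Q/\Z(d-j)))$, which by duality and purity comes down to Jannsen's weight theorem $H^0_{\Gal}(k,H^1_{et}(\bar Y,\Z_l(j+1)))=0$ (Lemma \ref{etalecoh2}). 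This is precisely where the hypothesis that $X$ admits a smooth projective compactification $Y$ enters, and your proposal never engages with it. Only with (a)--(d) in hand does the kernel--cokernel sequence for multiplication by $n$ give $\ker(\pi_*)[n]=0$ and $\ker(\pi_*)/n=0$ simultaneously, hence unique divisibility.
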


\begin{proof}
Since $j \leq -2$, Proposition \ref{bydimension}
shows that
\begin{equation}\label{tempuse}
c^{2d-i, d-j}_{X, n} : 
H_{i}^M(X, \Z/n(j)) \to H^{2d-i}_{et, c}(X, \Z/n(d-j))
\end{equation}
is an isomorphism for any $n \in \Z_{>0} \cup \{ \infty \}$.
If $i < -2$, then we have
$H^{2d-i}_{et, c}(X, \Z/n\Z(d-j)) = 0$ since $\cd k = 2$.
We also have $H^{2d+2}_{et, c}(X, \Q/\Z(d-j)) = 0$,
because this group is dual to $H^0_{\Gal}(k, \hat{\Z}(j+1))^* = 0$
under Poincar\'e duality.
Now (1) follows from \eqref{univcoeff}.

(2) follows from the four claims (a)-(d) below.
In what follows we write
$H_*^M(k, -)$ for $H_*^M(\Spec k, -)$.
Put $H_{-2}^M(X, \Z(j))_0 
:= \ker[H_{-2}^M(X, \Z(j)) \to H_{-2}^M(k, \Z(j))]$.

\begin{itemize}
\item[(a)]
$H_{-2}^M(X, \Z(j)) \to H_{-2}^M(k, \Z(j))$
is surjective.
\item[(b)]
$H_{-2}^M(X, \Z(j))/n \to H_{-2}^M(k, \Z(j))/n$
is bijective for any $n \in \Z_{>0}$.
\item[(c)]
$H_{-2}^M(X, \Z(j))[n] \to H_{-2}^M(k, \Z(j))[n]$
is surjective for any $n \in \Z_{>0}$.
\item[(d)]
$H_{-2}^M(X, \Z(j))_0$ is torsion free.
\end{itemize}
Indeed, by (a) we have
an exact sequence for any $n \in \Z_{>0}$
\begin{align*}
 0 \to &H_{-2}^M(X, \Z(j))_0[n] \to
 H_{-2}^M(X, \Z(j))[n] \to H_{-2}^M(k, \Z(j))[n]
\\
 & \to H_{-2}^M(X, \Z(j))_0/n \to
 H_{-2}^M(X, \Z(j))/n \to H_{-2}^M(k, \Z(j))/n \to 0,
%
%
%
%
\end{align*}
which shows (2) in view of (b)-(d).

We prove claims (a)-(d).
Since we have shown 
$H_{-3}^M(X, \Z(j))_{\Tor} = 0$,
\eqref{univcoeff} and \eqref{tempuse} yield a commutative diagram
\[
\begin{matrix}
H_{-2}^M(X, \Z(j))/n & \cong & H_{et, c}^{2d+2}(X, \Z/n\Z(d-j)) \\
\downarrow & & \downarrow_{\cong} \\
H_{-2}^M(k, \Z(j))/n & \cong & H_{\Gal}^{2}(k, \Z/n\Z(-j)),
\end{matrix}
\]
which shows (b). 
Taking a closed point $x \in X$,
the cokernel of 
$H_{-2}^M(X, \Z(j)) \to H_{-2}^M(k, \Z(j))$
is seen to be annihilated by $[k(x):k]$.
Using (b) with $n=[k(x):k]$,
we get (a).
By \eqref{univcoeff}
we have a commutative diagram with exact rows:
\[
\begin{matrix}
&H^{2d+1}_{et, c}(X, \Z/n\Z(d-j)) & \to 
&H_{-2}^M(X, \Z(j))[n] &
 \to 0
\\
&{}_{(*)} \downarrow&
&\downarrow&
\\
&H^{1}_{\Gal}(k, \Z/n\Z(-j)) & \to 
&H_{-2}^M(k, \Z(j))[n] &
 \to 0.
\end{matrix}
\]
The map $(*)$ is induced by the structure 
morphism $\alpha : X \to \Spec k$.
Thus it is dual to 
$\alpha^* : H^1_{\Gal}(k, \Z/n\Z(1+j)) \to H^1_{et}(X, \Z/n\Z(1+j))$,
which is injective by 
Leray spectral sequence.
This shows that $(*)$ is surjective,
and (c) follows.
We consider a commutative diagram
with exact rows and columns
\[
\begin{matrix}
&&
&&
&0&
\\
&&
&&
&\downarrow&
\\
& &
&H^{2}_{\Gal}(k, H^{2d-1}_{et, c}(\bar{X}, \Q/\Z(d-j))) & 
&H_{-2}^M(X, \Z(j))_{0, \Tor} &
\\
&&
&\downarrow&
&\downarrow&
\\
 0 \to 
&H_{-1}^M(X, \Z(j))\otimes \Q/\Z& \to
&H^{2d+1}_{et, c}(X, \Q/\Z(d-j)) & \to 
&H_{-2}^M(X, \Z(j))_{\Tor} &
 \to 0
\\
&\downarrow&
&\downarrow&
&\downarrow&
\\
 0 \to 
&H_{-1}^M(k, \Z(j))\otimes \Q/\Z& \to
&H^{1}_{\Gal}(k, \Q/\Z(d-j)) & \to 
&H_{-2}^M(k, \Z(j))_{\Tor} &
 \to 0.
\end{matrix}
\]
Note that the lower left vertical map is surjective
since the cokernel of 
$H_{-1}^M(X, \Z(j)) \to H_{-1}^M(k, \Z(j))$ is torsion
(annihilated by $[k(x):k]$ for any closed point $x \in X$).
Now the following lemma completes the proof of (d).
\end{proof}

\begin{lemma}\label{etalecoh2}
Let $X$ be a variety over $k$ 
satisfying the same assumption as in
Proposition \ref{high2}.
If $j \leq -2$, then
$H^2_{\Gal}(k, H^{2d-1}_{et, c}(\bar{X}, \Q/\Z(d-j))=0$.
\end{lemma}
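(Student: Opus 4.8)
The plan is to combine local Tate duality with Poincar\'e duality over $\bar{k}$, reducing the vanishing to a statement about the Hodge--Tate (resp. Frobenius) weights of $H^1(\bar{X})$. Write $M = H^{2d-1}_{et, c}(\bar{X}, \Q/\Z(d-j))$ for the $G_k = \Gal(\bar{k}/k)$-module in question. First I would record that $M$ is \emph{divisible}: since $\bar{X}$ is smooth and connected of dimension $d$, the trace isomorphism gives $H^{2d}_{et, c}(\bar{X}, \hat{\Z}(d-j)) \cong \hat{\Z}(-j)$, which is torsion-free, so the universal-coefficient (Bockstein) sequence forces $M \cong H^{2d-1}_{et, c}(\bar{X}, \hat{\Z}(d-j)) \otimes \Q/\Z$ to be divisible. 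Local Tate duality for the $p$-adic field $k$ (where $\cd k = 2$ and $H^2_{\Gal}(k, \mu) = \Q/\Z$) then identifies $H^2_{\Gal}(k, M)$ with the Pontryagin dual of $N^{G_k}$, where $N = \Hom(M, \Q/\Z(1))$ is the Tate dual. As $M$ is divisible, $N$ is torsion-free; moreover Poincar\'e duality over $\bar{k}$ (at finite level $H^{2d-1}_{et, c}(\bar{X}, \Z/n(d-j)) \cong \Hom(H^1(\bar{X}, \Z/n(j)), \Z/n)$, then passing to the limit after twisting) identifies $N$ with the continuous \'etale cohomology $H^1(\bar{X}, \hat{\Z}(j+1))$. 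Thus it suffices to prove $N^{G_k} = 0$.

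Since $N$ is torsion-free, its invariants inject into $(N \otimes \Q)^{G_k} = \prod_\ell (H^1(\bar{X}, \Q_\ell(j+1)))^{G_k}$, so I am reduced to showing that $H^1(\bar{X}, \Q_\ell(j+1))$ carries no nonzero Galois invariant, for every prime $\ell$. The mechanism in each case is the same: a nonzero $G_k$-fixed line would be a copy of the trivial representation, hence would contribute a Frobenius eigenvalue $1$ (weight $0$) when $\ell \neq p$, or a Hodge--Tate weight $0$ when $\ell = p$; I will show both are excluded by the twist $j+1 \leq -1$.

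For the geometric input I would use the smooth projective compactification $Y \supset X$ provided by the hypothesis. The cohomology $H^1(\bar{X})$ of the smooth variety $\bar{X}$ is an extension of a subspace of $H^0$ of the boundary strata by a quotient of $H^1(\bar{Y})$ (localization/Gysin sequence, together with purity after stratifying $Z = Y \setminus X$); since $(-)^{G_k}$ is left exact it is enough to treat these two pieces. For $\ell \neq p$, the Weil conjectures---applied to a smooth model over an open of $\Spec \mathcal{O}_k$ and transported to the $G_k$-representation---give that $H^1(\bar{Y}, \Q_\ell)$ is pure of weight $1$ and $H^0$ of the (proper) strata is pure of weight $0$; after twisting by $j+1$ resp. $j$ all Frobenius weights become $\geq 3 > 0$, so neither piece has a weight-$0$ invariant. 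For $\ell = p$ the corresponding input is $p$-adic Hodge theory: $H^1(\bar{Y}, \Q_p)$ is de Rham with Hodge--Tate weights in $\{0, 1\}$ and the boundary $H^0$ has Hodge--Tate weight $0$; after the same twists all Hodge--Tate weights become $\geq 1 > 0$, excluding a Hodge--Tate weight $0$ and hence any Galois invariant. Combining the two cases gives $N^{G_k} = 0$ and therefore $H^2_{\Gal}(k, M) = 0$.

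The main obstacle is the prime $\ell = p$: unlike the $\ell \neq p$ case there is no Frobenius-weight argument available, and one must instead invoke that the $p$-adic \'etale cohomology of the proper smooth $\bar{Y}$ (and, through the localization sequence, of the open $\bar{X}$) is Hodge--Tate with the expected weights, so that positivity of the Hodge--Tate weights after twisting kills the invariants. The only other point requiring care is the torsion bookkeeping, which is exactly what the divisibility of $M$ (equivalently the torsion-freeness of $N$) is used to dispatch, allowing the passage from the integral invariants $N^{G_k}$ to the rational statement over each $\Q_\ell$.
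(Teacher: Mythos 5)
Your reduction is the same as the paper's: local Tate duality plus Poincar\'e duality over $\bar{k}$ turns the statement into $H^0_{\Gal}(k, H^1_{et}(\bar{X}, \Z_\ell(j+1)))=0$ for every prime $\ell$, and the localization/purity sequence for $Y \supset X$ splits this into the boundary $H^0$ piece (killed because $j \neq 0$) and $H^1_{et}(\bar{Y}, \Z_\ell(j+1))$. The only divergence is at the last step: the paper simply cites Jannsen (Weights in arithmetic geometry, Thm.\ 4.2 for $\ell \neq p$ and Thm.\ 5.3 for $\ell = p$) for the vanishing of the invariants of $H^1(\bar{Y}, \Z_\ell(j+1))$, whereas you unpack the weight and Hodge--Tate mechanisms behind that citation. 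Your $\ell = p$ argument (Hodge--Tate weights of $H^1(\bar{Y})$ in $\{0,1\}$, all shifted to be nonzero by the twist $j+1 \leq -1$, hence no invariants after embedding into $V \otimes \C_p$) is fine. But your $\ell \neq p$ justification contains a false claim: $Y$ need not have good reduction --- there is no ``smooth model over an open of $\Spec \mathcal{O}_k$'' for a local base with bad reduction --- and $H^1(\bar{Y}, \Q_\ell)$ is \emph{not} pure of weight $1$ as a $G_k$-representation in general (for a Tate elliptic curve it is an extension of $\Q_\ell(-1)$ by $\Q_\ell$, with weights $0$ and $2$). The step is repairable, and this is exactly what Jannsen's theorem packages: by the monodromy filtration (known for $H^1$ via N\'eron models / semistable reduction for abelian varieties), the weights of $H^1(\bar{Y}, \Q_\ell)$ lie in $\{0,1,2\}$, and twisting by $j+1 \leq -1$ shifts them all to be $\geq 2 > 0$, which still rules out Frobenius-, hence Galois-, invariants. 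So the conclusion stands, but as written the purity assertion for $\ell \neq p$ is incorrect and should be replaced by the monodromy-weight bound (or by the citation the paper uses).
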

\begin{proof}
By Poincar\'e duality,
it suffices to show
$H^0_{\Gal}(k, H^1_{et}(\bar{X}, \Z_l(j+1)))=0$
for all prime number $l$.
There is an exact sequence
\[ 0 \to H^{1}_{et}(\bar{Y}, \Z_l(j+1))
  \to H^{1}_{et}(\bar{X}, \Z_l(j+1))
  \to H^{2}_{et, \bar{Y} \setminus {X}}(\bar{Y}, \Z_l(j+1)).
\]
By excision and purity, we have
We have
$H^0_{\Gal}(k, H^{2}_{et, \bar{Y} \setminus {X}}(\bar{Y}, \Z_l(j+1)))=0$.
Thus we are reduced to showing
$H^0_{\Gal}(k, H^{1}_{et}(\bar{Y}, \Z_l(j+1)))=0$,
but this is a result of Jannsen
\cite[Theorem 4.2 and 5.3 for the case $l \not= p$
and $l=p$ respectively]{jannsen}.
\end{proof}

\subsection{Auxiliary lemmas}
For future use, we record a few simple lemmas.
The following lemma is often used
concurrently with the fact that
the groups $\pi_1^{ab}(X)$ and $\Br(X)^*$
have no non-trivial divisible elements
for any smooth variety $X$.
(Cf. \S \ref{convention}.)

\begin{lemma}\label{easylemma}
Let $f: A \to B$ be a homomorphism of abelian groups.
\begin{enumerate}
\item
Suppose $B_{\divi}=0$.
Then there is an exact sequence
\[ 0 \to A_{\divi} \to \ker(f) \to 
\underset{\leftarrow}{\lim} \ker(f/n).
\]
\item
Suppose that there is an $N \in \Z_{>0}$ such that
the canonical map $\ker(f/nN) \to \ker(f \otimes \Q/\Z)$ 
is bijective for all $n \in \Z_{>0}$.
Then we have
$\underset{\leftarrow}{\lim} \ker(f/n) = 0$.
(Hence $\ker(f)=A_{\divi}$ if $B_{\divi}=0$.)
\end{enumerate}
\end{lemma}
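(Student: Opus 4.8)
The plan is to treat both statements by directly unwinding the two systems of canonical maps attached to the family $\{\ker(f/n)\}_n$; I write $K_n := \ker(f/n) \subseteq A/nA$ throughout, and recall that the inverse-system transition maps $K_{nm} \to K_n$ are the identity-induced projections, while the inductive-system transition maps $K_n \to K_{nm}$ are multiplication by $m$, with colimit $\ker(f \otimes \Q/\Z)$.

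For (1), I would construct the natural map $\ker(f) \to \varprojlim_n K_n$ and compute its kernel. Sending $a \in \ker(f)$ to the family of its reductions $\bar a \in A/nA$ works: since $f(a)=0$, each $\bar a$ lies in $K_n$, and the family is compatible with the projections $K_{nm} \to K_n$, hence defines an element of $\varprojlim_n K_n$. This map kills $a$ exactly when $a \in nA$ for all $n$, i.e. when $a \in A_{\divi}$, so its kernel is $\ker(f) \cap A_{\divi}$. The only remaining point—and the sole place the hypothesis enters—is that $A_{\divi} \subseteq \ker(f)$: if $a \in A_{\divi}$ then $f(a) \in nB$ for every $n$, forcing $f(a) \in B_{\divi}=0$. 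This identifies the kernel with $A_{\divi}$ and yields the asserted exact sequence.

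For (2), I would exploit the interplay of the inverse system (projections) and the inductive system (multiplication maps) on the \emph{same} family. First, the multiples of $N$ are cofinal in $(\Z_{>0}, \mid)$, so it suffices to show $\varprojlim_n K_{nN}=0$. The crux is that, for $n \mid m$, the inductive transition map $K_{nN} \to K_{mN}$ is multiplication by $m/n$ and is compatible with the canonical maps to $\ker(f \otimes \Q/\Z) = \varinjlim_m K_m$; the bijectivity hypothesis makes both $K_{nN}, K_{mN} \to \ker(f \otimes \Q/\Z)$ isomorphisms, hence this multiplication map is itself an isomorphism. Combining this with the elementary identity $\pi \circ (\text{mult. by } m/n) = (m/n)\cdot\id$ on $K_{nN}$, I find that the projection $\pi : K_{mN} \to K_{nN}$ has image inside $(m/n)K_{nN}$. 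For a compatible family $(x_{nN}) \in \varprojlim_n K_{nN}$ this gives $x_{nN} \in (m/n)K_{nN}$ for every $m$ divisible by $n$; choosing $m=n^2N$, so that $m/n = nN$, and using that $K_{nN} \subseteq A/nNA$ is annihilated by $nN$, I conclude $x_{nN} \in nN \cdot K_{nN}=0$. Thus $\varprojlim_n K_n=0$, and together with (1) this gives $\ker(f)=A_{\divi}$ when $B_{\divi}=0$.

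The well-definedness of the various canonical maps and the accompanying compatibilities are routine. I expect the main obstacle to be the crux step in (2): recognizing that the bijectivity hypothesis forces the inductive transition maps to be \emph{isomorphisms}, which is precisely what converts the inverse-system projections into divisibility operators and lets the bounded exponent of each $K_n$ collapse the inverse limit to zero.
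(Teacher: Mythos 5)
Your proof is correct and follows essentially the same route as the paper: part (1) by direct unwinding of the definitions, and part (2) by observing that the bijectivity hypothesis forces the inductive transition maps $\ker(f/nN)\to\ker(f/mN)$ to be isomorphisms, whence the inverse-system projections factor through multiplication by $m/n$ and vanish once $m/n$ is a multiple of $nN$. Your write-up is in fact slightly more careful than the paper's one-line version of this last step, which asserts the projections are zero without singling out the cofinal choice of $m$ that makes the annihilation argument work.
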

\begin{proof}
(1) is a direct consequence of the definition.
We prove (2).
By assumption,
the canonical map 
$\ker(f/mN) \to \ker(f/mnN)$ 
is bijective for all $n, m \in \Z_{>0}$,
which implies that the canonical map
$\ker(f/mnN) \to \ker(f/nN)$ is the zero-map.
Thus we have
$\underset{\leftarrow}{\lim} \ker(f/n) = 0$.
\end{proof}

%

\begin{lemma}\label{divdiv}
Let $X$ be a smooth irreducible variety over $k$ of dimension $d$,
and let $i, j \in \Z$.
If $i+2 \geq j+d$ or $j \geq d+2$, then
we have $H_i^M(X, \Z(j))_{\divi}=H_i^M(X, \Z(j))_{\Div}$.
In particular, 
we have $C_r(X)_{\divi}=C_r(X)_{\Div}$ 
for any $r \in \Z_{\geq 0}$ when $d \leq 2$.
If $d=1$, then $C_0(X)_{\Div}=0$.
\end{lemma}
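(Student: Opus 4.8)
The statement to prove is Lemma \ref{divdiv}. The key general fact (from \S \ref{convention}) is that $A_{\Div}=A_{\divi}$ whenever $A[n]$ is finite for all $n\in\Z_{>0}$. So the whole lemma reduces to showing that $H_i^M(X,\Z(j))[n]$ is finite under the stated numerical hypotheses. The plan is to control this torsion through the universal coefficient sequence \eqref{univcoeff} and the comparison with \'etale cohomology via the maps $c^{i,j}_{X,n}$.

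First I would use \eqref{univcoeff}, which gives an injection $H_{i-1}^M(X,\Z(j))[n]\hookrightarrow H_i^M(X,\Z/n(j))$. Thus it suffices to bound $H_i^M(X,\Z/n(j))$ with the indices shifted, and the natural tool is Proposition \ref{bydimension}: when $i\le j$ or $j\ge d+\cd F$ (here $\cd k=2$), the map $c^{2d-i,d-j}_{X,n}$ is an isomorphism onto an \'etale cohomology group $H^{2d-i}_{et,c}(X,\Z/n(j'))$ with finite coefficients. Such \'etale cohomology groups with $\Z/n$-coefficients over a $p$-adic field are finite. The arithmetic is to check that the hypotheses $i+2\ge j+d$ or $j\ge d+2$ translate, after the index shift coming from \eqref{univcoeff}, into exactly the range where Proposition \ref{bydimension} applies (recall $\cd k=2$, so $j\ge d+2$ is the condition $j\ge d+\cd F$, while $i+2\ge j+d$ becomes the ``$i\le j$''-type condition for the shifted indices). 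I expect this to be a short index computation rather than a genuine difficulty.

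For the final assertions I would specialize. Taking $i=-r$, $j=-r$ (so $H_{-r}^M(X,\Z(-r))=C_r(X)$ by Theorem \ref{cg-h}) and $d\le 2$, one checks that $i+2\ge j+d$ reads $-r+2\ge -r+d$, i.e. $d\le 2$, which holds; hence $C_r(X)_{\divi}=C_r(X)_{\Div}$ for all $r$. For the last claim, when $d=1$ the group $C_0(X)=H_0^M(X,\Z(0))$ is the Suslin homology of an affine or projective curve; its divisible part vanishes because (via Lemma \ref{candpsi} / Proposition \ref{bydimension}, or directly via the semi-abelian variety description in Remark \ref{cg-rem}) over a $p$-adic field a semi-abelian variety has no divisible points in its group of $k$-rational points, and the degree-zero part of $C_0(X)$ embeds into $J(k)$ for the generalized Jacobian $J$.

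The main obstacle will be the $d=1$ statement $C_0(X)_{\Div}=0$, since it is not purely a finiteness-of-torsion argument: one must show the maximal divisible subgroup itself is zero. I would handle it by invoking Mattuck's theorem that $J(k)$ contains an open subgroup of finite index isomorphic to $\mathcal{O}_k^{\dim J}$, whence $J(k)_{\Div}=0$; combined with the exact sequence $0\to J(k)\to C_0(X)\xrightarrow{\deg}\Z$ (from Remark \ref{cg-rem}) and the fact that $\Z$ is torsion-free with $\Z_{\Div}=0$, this forces $C_0(X)_{\Div}=0$.
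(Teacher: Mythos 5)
Your proposal follows essentially the same route as the paper: reduce to finiteness of the $n$-torsion via the convention that $A_{\Div}=A_{\divi}$ when every $A[n]$ is finite, control that torsion through \eqref{univcoeff} and Proposition \ref{bydimension} by comparison with finite \'etale cohomology groups $H^{2d-i-1}_{et,c}(X,\Z/n(d-j))$, and dispose of the $d=1$ statement via Remark \ref{cg-rem} and Mattuck's theorem --- exactly the paper's argument. Two small index caveats: \eqref{univcoeff} exhibits $H_i^M(X,\Z(j))[n]$ as a quotient (not a subgroup) of $H_{i+1}^M(X,\Z/n(j))$, which is just as good for finiteness; and since the twist appearing in $c^{i,j}_{X,n}$ is $d-j$ rather than $j$, the hypothesis $j\ge d+2$ is not literally the condition ``$j\ge d+\cd F$'' of Proposition \ref{bydimension} --- in that range one should instead invoke the vanishing of $H_i^M(X,\Z(j))$ for $j>d$ (via \eqref{h-ch} and induction with \eqref{gysin}), a point the paper's own one-line proof also glosses over.
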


\begin{proof}
It suffices to show that
$H_i^M(X, \Z(j))[n]$ is finite for any $n \in \Z_{>0}$.
Proposition \ref{bydimension} and \eqref{univcoeff}
reduce this to the finiteness of
$H^{2d-i-1}_{et, c}(X, \Z/n(d-j))$,
which is well-known.
For the last statement, we recall that
$C_0(X)$ is isomorphic to the relative Picard group
(cf. Remark \ref{cg-rem}),
which has no non-trivial divisible subgroup
by a theorem of Mattuck \cite{mat}.
\end{proof}

\section{Curves over a local field}\label{sect-cv}
In this section, $k$ is a finite extension of $\Q_p$.
Let $X$ be a smooth projective irreducible curve over $k$
and let $U$ be an open dense subscheme of $X$.
Put $Z = X \setminus U$.

\subsection{The Brauer-Manin pairing}
The following theorem was proved by
Scheiderer and van Hamel \cite[(3.5)]{svh}.
When $U=X$, this theorem is due to Lichtenbaum \cite{lichtenbaum}.

\begin{theorem}[Scheiderer/van Hamel \cite{svh}]\label{thm-svh}
The homomorphism $\psi_U : C_0(U) \to \Br(U)^*$
is an injection with dense image.
The induced map
$\psi_U/n : C_0(U)/n \to \Br(U)^*/n$ 
is an isomorphism
for all $n \in \Z_{>0}$.
\end{theorem}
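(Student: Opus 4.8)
The plan is to establish the isomorphism $\psi_U/n$ first and then bootstrap to the integral statements. Since $\dim U = 1$, the relevant comparison map is $c^{2,1}_{U,n}$, and Lemma~\ref{candpsi} reduces every assertion about $\psi_U/n$ to $c^{2,1}_{U,n}$ together with $(\Pic(U)/n)^{*}$. For injectivity of $\psi_U/n$ (for $n \in \Z_{>0} \cup \{\infty\}$), Lemma~\ref{candpsi} gives $\ker(\psi_U/n) \cong \ker(c^{2,1}_{U,n})$, and the latter vanishes by Lemma~\ref{cv-lemma}(1). This gives injectivity of $\psi_U/n$ for all $n$, which will be reused below.

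For surjectivity of $\psi_U/n$ (for finite $n$), the exact sequence of Lemma~\ref{candpsi} identifies $\coker(\psi_U/n)$ with $\ker[\coker(c^{2,1}_{U,n}) \to (\Pic(U)/n)^{*}]$, so it suffices to show that $\coker(c^{2,1}_{U,n})$ embeds into $(\Pic(U)/n)^{*}$. I would run this through the localization square of Lemma~\ref{cv-lemma}, which provides an injection $\coker(c^{2,1}_{U,n}) \hookrightarrow \coker(c^{2,1}_{X,n})$ and thereby reduces the question to the proper curve $X$. For proper $X$ the target of $c^{2,1}_{X,n}$ is ordinary \'etale cohomology, and Poincar\'e duality $H^2_{et}(X,\Z/n(1)) \cong H^0_{et}(X,\Z/n)^{*}$ combined with the Kummer sequence $0 \to \Pic(X)/n \to H^2_{et}(X,\Z/n(1)) \to \Br(X)[n] \to 0$ identifies $\coker(c^{2,1}_{X,n})$ with $(\Pic(X)/n)^{*}$; this is precisely the classical Lichtenbaum duality for a smooth proper curve, which we are entitled to cite. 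A short diagram chase, using the surjection $\Pic(X) \twoheadrightarrow \Pic(U)$ and the resulting injection $(\Pic(U)/n)^{*} \hookrightarrow (\Pic(X)/n)^{*}$, then forces the image of $\coker(c^{2,1}_{U,n})$ in $(\Pic(X)/n)^{*}$ to lie in $(\Pic(U)/n)^{*}$, so $\coker(\psi_U/n)=0$. I expect this surjectivity step to be the main obstacle: it is the only point where one must control the \emph{image} of the comparison map rather than merely its kernel, and it is where the genuine arithmetic input (Poincar\'e duality together with the proper-curve duality) enters.

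Finally I would deduce the integral statements from the mod-$n$ isomorphism. For injectivity of $\psi_U$, recall that $\Br(U)^{*}$ has no nonzero divisible elements (the fact recorded before Lemma~\ref{easylemma}); hence Lemma~\ref{easylemma}(1) yields an exact sequence $0 \to C_0(U)_{\divi} \to \ker(\psi_U) \to \varprojlim_n \ker(\psi_U/n)$. The inverse limit vanishes because each $\ker(\psi_U/n)=0$ by the first step, while $C_0(U)_{\divi} = C_0(U)_{\Div} = 0$ by Lemma~\ref{divdiv} (using $\dim U = 1$); thus $\ker(\psi_U)=0$. For density of the image, note that $\Br(U)^{*}$ is profinite, being the Pontryagin dual of the discrete torsion group $\Br(U)$ whose $n$-torsion is finite, and that its finite quotients $\Br(U)^{*}/n \cong \Br(U)[n]^{*}$ are cofinal; since $\psi_U/n$ is surjective for every $n$, the image of $\psi_U$ surjects onto each of these quotients and is therefore dense.
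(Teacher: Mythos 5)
Your proposal is correct and follows essentially the same route as the paper: injectivity of $\psi_U/n$ via Lemmas \ref{cv-lemma} and \ref{candpsi}, surjectivity by comparing $\coker(c^{2,1}_{U,n})\hookrightarrow\coker(c^{2,1}_{X,n})\cong(\Pic(X)/n)^*$ with the injection $(\Pic(U)/n)^*\hookrightarrow(\Pic(X)/n)^*$ and invoking Lichtenbaum's theorem for the proper curve $X$, and the integral statements from Lemmas \ref{easylemma} and \ref{divdiv}. The only cosmetic difference is that you unwind Lichtenbaum duality via Poincar\'e duality and the Kummer sequence where the paper simply cites $\coker(\psi_X/n)=0$.
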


We deduce Theorem \ref{thm-svh} assuming Lichtenbaum's result.
We may assume $X$ is geometrically irreducible over $k$.
Let $n \in \Z_{>0}$.
Lemmas \ref{cv-lemma} and \ref{candpsi} 
show the injectivity of $\psi_U/n$.
By Lemma \ref{candpsi},
we have a commutative diagram
\[
\begin{matrix}
0 \to \coker(\psi_U/n) \to 
&\coker(c^{2, 1}_{U, n}) & \to & (\Pic(U)/n)^*
\to 0
\\
&\downarrow^{(*)} & &  \downarrow 
\\
&\coker(c^{2, 1}_{X, n}) & \cong & (\Pic(X)/n)^*.
\end{matrix}
\]
The bottom horizontal map is bijective 
because $\coker(\psi_X/n)=0$ by Lichtenbaum's result.
By Lemma \ref{cv-lemma}, $(*)$ is injective,
thus $\coker(\psi_U/n)=0$.
The rest of assertion follows from
Lemmas \ref{easylemma} and \ref{divdiv}.
\qed

\subsection{Class field theory}
We recall a result for projective curves
due to Bloch and Saito.

\begin{theorem}[\cite{bloch2} \cite{saito1}]\label{shuji}
\begin{enumerate}
\item
The kernel of the homomorphism 
$\rho_X : C_1(X) (= SK_1(X)) \to \pi_1^{ab}(X)$
coincides with $C_1(X)_{\Div}$.
Moreover, $\rho_X/n$ is injective 
for all $n \in \Z_{>0}$. 
\item
The group 
$D(X) := \pi_1^{ab}(X)/\overline{\Image(\rho_X)}$
is isomorphic to 
$\hat{\Z}^{\oplus r(X)}$ for some $r=r(X) \in \Z_{\geq 0}$.
We have $\coker(\rho_X/n) \cong D(X)/n$ 
for any $n \in \Z_{>0}$.
\end{enumerate}
\end{theorem}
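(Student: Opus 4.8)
The plan is to reduce everything to the étale comparison maps $c^{3,2}_{X,n}$ via Lemma~\ref{candrho} (applied with $d=1$), exactly as Theorem~\ref{thm-svh} was deduced for $\psi_U$. For part (1), Lemma~\ref{candrho} gives $\ker(\rho_X/n) \cong \ker(c^{3,2}_{X,n})$, and since here $i=j+1=3$, Proposition~\ref{bydimension} shows $c^{3,2}_{X,n}$ is injective; hence $\rho_X/n$ is injective for every $n \in \Z_{>0}$. To pass from $\rho_X/n$ to $\rho_X$ itself I would invoke Lemma~\ref{easylemma}(1) with $B=\pi_1^{ab}(X)$, which has no nontrivial divisible elements; the resulting exact sequence $0 \to C_1(X)_{\divi} \to \ker(\rho_X) \to \underset{\leftarrow}{\lim}\,\ker(\rho_X/n)$ has vanishing right-hand term by the injectivity just proved, so $\ker(\rho_X)=C_1(X)_{\divi}$. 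Finally Lemma~\ref{divdiv} (applicable since $d=1\leq 2$) identifies $C_1(X)_{\divi}=C_1(X)_{\Div}$, completing (1).

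For part (2), I would first dispose of the identity $\coker(\rho_X/n)\cong D(X)/n$, which is formal. Writing $I=\Image(\rho_X)\subseteq \pi_1^{ab}(X)$, one has $\coker(\rho_X/n)=\pi_1^{ab}(X)/(I+n\pi_1^{ab}(X))$ and $D(X)/n=\pi_1^{ab}(X)/(\overline{I}+n\pi_1^{ab}(X))$; since $n\pi_1^{ab}(X)$ is open in the profinite group $\pi_1^{ab}(X)$, the subgroup $I+n\pi_1^{ab}(X)$ is already closed, so $\overline{I}+n\pi_1^{ab}(X)=I+n\pi_1^{ab}(X)$ and the two cokernels coincide. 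Combined with Lemma~\ref{candrho} this yields $D(X)/n\cong \coker(c^{3,2}_{X,n})$ for all $n$.

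It remains to prove the structural statement $D(X)\cong \hat{\Z}^{\oplus r(X)}$, which I expect to be the main obstacle and the genuinely arithmetic heart of the theorem (this is where I would ultimately lean on \cite{bloch2, saito1}). Since $\pi_1^{ab}(X)$ is profinite and $D(X)$ is a quotient by a closed subgroup, $D(X)$ is profinite and $D(X)\cong \underset{\leftarrow}{\lim}\,D(X)/n\cong \underset{\leftarrow}{\lim}\,\coker(c^{3,2}_{X,n})$. The strategy is to compute these cokernels by Bloch--Ogus theory: the coniveau spectral sequence on the curve $X$ identifies $C_1(X)/n \xrightarrow{c^{3,2}_{X,n}} H^3_{et}(X,\Z/n(2))$ with the inclusion of the bottom coniveau piece, so that $\coker(c^{3,2}_{X,n})\cong H^0_{\Zar}(X,\mathcal{H}^3(\Z/n(2)))$ is an unramified cohomology group (the curve analogue of Lemma~\ref{unram}). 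To conclude I would show this group is isomorphic to $(\Z/n)^{\oplus r}$ with $r$ independent of $n$: torsion-freeness of the inverse limit and constancy of the rank should follow from local Poincar\'e duality together with the Galois-cohomological finiteness and Jannsen-type vanishing inputs already used in Lemma~\ref{etalecoh2} (\cite{jannsen}). Passing to the limit then gives $D(X)\cong \hat{\Z}^{\oplus r}$, and $\coker(\rho_X/n)\cong D(X)/n$ by construction. The delicate point throughout is controlling the torsion and the $\ell$-independence of the rank of the unramified groups uniformly in $n$, which is precisely the content extracted from Bloch's and Saito's analysis of $SK_1$ of a $p$-adic curve.
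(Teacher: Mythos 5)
You should first note that the paper does not prove Theorem \ref{shuji} at all: it is stated as a recalled result and attributed to \cite{bloch2} and \cite{saito1}, and the only argument of this kind actually carried out in the paper is the deduction of its open-curve generalization, Theorem \ref{thm-hira}, from Lemmas \ref{cv-lemma}, \ref{candrho}, \ref{easylemma} and \ref{divdiv}. Your treatment of part (1) is exactly that argument specialized to $U=X$: $\ker(\rho_X/n)\cong\ker(c^{3,2}_{X,n})$ by Lemma \ref{candrho}, injectivity of $c^{3,2}_{X,n}$ from the case $i=j+1$ of Proposition \ref{bydimension} (ultimately Merkurjev--Suslin via Theorem \ref{bigth}), and then Lemmas \ref{easylemma}(1) and \ref{divdiv} to get $\ker(\rho_X)=C_1(X)_{\divi}=C_1(X)_{\Div}$. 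This is correct and self-contained modulo the paper's inputs (one should reduce to $X$ geometrically irreducible first, as the paper does for Theorem \ref{thm-svh}, since Lemma \ref{candrho} is stated under that hypothesis). Your formal identification $\coker(\rho_X/n)\cong D(X)/n$ via the observation that $\Image(\rho_X)+n\pi_1^{ab}(X)$ is already closed is also correct.

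The gap is in the structural claim $D(X)\cong\hat{\Z}^{\oplus r(X)}$, which is the genuine content of the theorem and the reason the paper cites Saito rather than proving it. Your reduction $D(X)\cong\varprojlim\coker(c^{3,2}_{X,n})\cong\varprojlim H^0_{\Zar}(X,\mathcal{H}^3(\Z/n(2)))$ is fine (it is the curve analogue of Lemma \ref{unram}), but the assertion that this limit is free of finite rank over $\hat{\Z}$, with rank independent of $n$ and of the residue characteristic, does not follow from ``local Poincar\'e duality'' plus the Jannsen-type vanishing used in Lemma \ref{etalecoh2}. Those inputs concern the generic fibre; the integer $r(X)$ is controlled by the special fibre of a regular model of $X$ over the ring of integers (in Saito's analysis it is read off from the reduction, cf.\ Remark \ref{rem-genus-zero}: $r(X)=0$ for potentially good reduction and $r(X)\le g(X)$ in general), and establishing the $\hat{\Z}$-freeness and the uniformity in $n$ requires the two-dimensional local class field theory of Kato--Saito applied to that model, not an argument purely in the \'etale cohomology of $X$ itself. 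You do acknowledge that you would ultimately lean on \cite{bloch2, saito1} here, which is consistent with the paper's own treatment; but as written your sketch of an independent proof of this point would not go through, and this step should be flagged as imported from the references rather than derived.
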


\begin{remark}\label{rem-genus-zero}
If $X$ has potentially good reduction, then $r(X)=0$.
In general, we have an inequality 
$r(X) \leq g(X)$, where $g(X)$ the genus of $X$
\cite[(6.2)]{saito1}.
In particular, 
We have $D(X)=0$ when $g(X)=0$.
\end{remark}

The following generalization follows from 
Lemmas \ref{cv-lemma}, \ref{candrho}, \ref{easylemma} and \ref{divdiv}.

\begin{theorem}\label{thm-hira}
\begin{enumerate}
\item
The kernel of the homomorphism 
$\rho_U : C_1(U) \to \pi_1^{ab}(U)$
coincides with $C_1(U)_{\Div}$.
Moreover, $\rho_U/n$ is injective 
for all $n \in \Z_{>0}$. 
\item
Set $D(U) := \pi_1^{ab}(U)/\overline{\Image(\rho_U)}$. 
The canonical maps
$D(U) \to D(X)$
and
$\coker(\rho_U/n) \to \coker(\rho_X/n) (\cong D(X)/n)$
are bijective for any $n \in \Z_{>0}$.
\end{enumerate}
\end{theorem}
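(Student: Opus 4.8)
The plan is to reduce every assertion to the projective curve $X$, where Theorem \ref{shuji} is available, by means of the comparison maps $c^{3,2}_{-,n}$ together with the localization statements of Lemma \ref{cv-lemma}. As in the proof of Theorem \ref{thm-svh}, I would first reduce to the case where $X$ (hence $U$) is geometrically irreducible over $k$, so that Lemma \ref{candrho} applies with $d=1$; throughout I use that $\cd k = 2$ and that $\pi_1^{ab}(U)$ has no nontrivial divisible elements.

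For part (1), I would invoke Lemma \ref{candrho} to identify $\ker(\rho_U/n) \cong \ker(c^{3,2}_{U,n})$ for every $n \in \Z_{>0} \cup \{\infty\}$, and then Lemma \ref{cv-lemma}(2) to see that this kernel vanishes. This already yields injectivity of $\rho_U/n$ for all finite $n$, and with $n=\infty$ it gives $\ker(\rho_U \otimes \Q/\Z)=0$. Since every $\ker(\rho_U/n)$ is zero, $\underset{\leftarrow}{\lim} \ker(\rho_U/n) = 0$, so Lemma \ref{easylemma}(1) applied to $f=\rho_U$ (whose target has trivial divisible part) forces $\ker(\rho_U) = C_1(U)_{\divi}$. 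Finally Lemma \ref{divdiv} in the case $d=1$ gives $C_1(U)_{\divi}=C_1(U)_{\Div}$, completing part (1).

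For part (2), I would first establish the bijection on cokernels. By Lemma \ref{candrho} we have $\coker(\rho_U/n) \cong \coker(c^{3,2}_{U,n})$ and $\coker(\rho_X/n) \cong \coker(c^{3,2}_{X,n})$, while Lemma \ref{cv-lemma}(2), using $\cd k = 2$, identifies $\coker(c^{3,2}_{U,n}) \cong \coker(c^{3,2}_{X,n})$. As all these identifications are functorial in the open immersion $U \hookrightarrow X$, their composite is the canonical map $\coker(\rho_U/n) \to \coker(\rho_X/n)$, which is therefore an isomorphism; combined with Theorem \ref{shuji}(2) this gives $\coker(\rho_U/n) \cong D(X)/n$ for all $n$.

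It remains to upgrade this to the integral isomorphism $D(U) \cong D(X)$, which I expect to be the \emph{main obstacle}. The first point is to identify $\coker(\rho_U/n)$ with $D(U)/n$: because $H^1_{et}(U,\Z/n)$ is finite, the subgroup $n\,\pi_1^{ab}(U)$ is open, so $\Image(\rho_U) + n\,\pi_1^{ab}(U)$ is already closed and hence equals $\overline{\Image(\rho_U)} + n\,\pi_1^{ab}(U)$; dividing out shows $\coker(\rho_U/n) = D(U)/n$, compatibly with $\coker(\rho_X/n) = D(X)/n$. Thus the canonical map of profinite groups $D(U) \to D(X)$ becomes bijective after $\otimes\,\Z/n$ for every $n$. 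To conclude I would argue directly with the profinite topology: surjectivity holds because the image is closed (continuous image of a compact group) and dense (it meets every coset of $nD(X)$ while $\bigcap_n nD(X)=0$), and injectivity follows because $D(X)\cong \hat{\Z}^{r(X)}$ is torsion free, so tensoring $0 \to K \to D(U) \to D(X) \to 0$ with $\Z/n$ carries no $\Tor$ term and gives $K/n=0$ for all $n$; a divisible closed subgroup of a profinite group is trivial, whence $K=0$. The delicate steps are exactly this identification $\coker(\rho_U/n)=D(U)/n$ and the final profinite-completion argument, both of which rest on the torsion-freeness and finite generation of $D(X)$ supplied by Theorem \ref{shuji}.
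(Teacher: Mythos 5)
Your proposal is correct and follows essentially the same route as the paper, which derives the theorem precisely from Lemmas \ref{cv-lemma}, \ref{candrho}, \ref{easylemma} and \ref{divdiv} combined with Theorem \ref{shuji}. The paper leaves the details (in particular the passage from the mod-$n$ isomorphisms to the integral statement $D(U)\cong D(X)$) unwritten, and your profinite argument is a valid way to fill them in.
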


\begin{remark}\label{rem-hira}
Following Hiranouchi \cite{hira}, we define
\[  C_1^w(U) := \coker[
K_2(k(X)) \to (\bigoplus_{x \in U_{(0)}} k(x)^*) \oplus
(\bigoplus_{z \in Z} K_2(k(X)_z)],
\]
where $k(X)_z$ is the completion of $k(X)$ at $z$.
He constructed the reciprocity map
$\rho_U': C_1^w(U) \to \pi_1^{ab}(U)$
and showed that the kernel of $\rho_U'$ coincides with $C_1^w(U)_{\Div}$,
and that $\pi_1^{ab}(U)/\overline{\Image(\rho_U')} 
\cong D(X)$.
It is easy to see the following:
\begin{itemize}
\item
The natural projection $\pi :C_1^w(U) \to C_1(U)$
fits into an exact sequence
\[ \oplus_{z \in Z} UK_2 k(X)_z \to C_1^w(U) 
\overset{\pi}{\to} C_1(U) \to 0. \]
Here we put
$UK_2 k(X)_z = \ker[K_2 k(X)_z \to k(z)^* \oplus K_2 k(z)]$
where the map is defined by
$a \mapsto (\pd_z(a), \pd_z(\{ \pi_z \} \cup a)$
for some uniformizer $\pi_z \in k(X)_z$ at $z$
(and $\pd_z$ is the tame symbol).
Note that $UK_2 k(X)_z$ is uniquely divisible.
\item
We have $\rho_U' = \rho_U \circ \pi$.
\end{itemize}
Hence Hiranouchi's result can be recovered by Theorem \ref{thm-hira}.
\end{remark}

\section{Surfaces over a local field}\label{sect-surface}
We keep assuming $k$ is a finite extension of $\Q_p$.
Let $X$ be a smooth projective geometrically connected surface over $k$.
Let $U \subset V \subset X$ be open dense subsets
such that $\dim(X \setminus V)=0$ and
such that $V \setminus U$ is a smooth curve.
Let $C$ be the normalization of $X \setminus U$,
and set $D(C) = \oplus_i D(C_i)$, 
where $C_i$ are the irreducible components of $C$,
and $D(C_i)$ is the group defined in Theorem \ref{shuji}.

\subsection{$\mathbb{P}^2$ minus curves}
As the first example, we show the following.
\begin{proposition}\label{ptwo}
Suppose $X \cong \mathbb{P}^2$.
Let $d$ be the greatest common divisor of
the degrees (as a subvariety of $\mathbb{P}^2$)
of the irreducible components of $X \setminus U$.
Then, there is an exact sequence
\[ 0 \to C_0(U)/nd \overset{\psi_U/nd}{\to} \Br(U)^*/nd  
   \to D(C)/nd \to 0
\]
for any $n \in \Z_{>0}$.
We have
$\ker(\psi_U) = C_0(U)_{\Div}$ and
$\Br(U)^*/\overline{\Image(\psi_U)} \cong D(C)$.
\end{proposition}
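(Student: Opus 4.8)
The plan is to pin down $\ker(\psi_U/m)$ and $\coker(\psi_U/m)$ for every $m\in\Z_{>0}$; the stated sequence is then the case $m=nd$, and the two integral assertions follow by passing to the limit. By Lemma~\ref{candpsi} these kernel and cokernel are governed by $c^{4,2}_{U,m}$ together with the correction term $(\Pic(U)/m)^*$, so the real work is to compute $\ker(c^{4,2}_{U,m})$ and $\coker(c^{4,2}_{U,m})$ via Proposition~\ref{keyprop}(1). Since $\cd k=2$, that proposition lets me replace $V\setminus U$ and $V$ by $C$ and $X=\mathbb{P}^2$, so I first record the relevant $\mathbb{P}^2$-data. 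The Brauer--Manin pairing for $\mathbb{P}^2$ identifies $\psi_X\colon CH_0(\mathbb{P}^2)=\Z\to\Br(\mathbb{P}^2)^*=\hat{\Z}$ with the canonical inclusion (here $\Br(\mathbb{P}^2)=\Br(k)=\Q/\Z$), so $\psi_X/m$ is an isomorphism and, by Lemma~\ref{candpsi}, $c^{4,2}_{X,m}$ is injective with $\coker(c^{4,2}_{X,m})\cong(\Pic(X)/m)^*\cong\Z/m$ (using $\Pic(\mathbb{P}^2)=\Z$). The class field theory of the curve $C$ (Lemma~\ref{candrho}, Theorem~\ref{shuji}, applied componentwise) gives $\coker(c^{3,2}_{C,m})\cong D(C)/m$; and $\coker(c^{3,2}_{X,m})=0$, because by Lemma~\ref{unram}(1) and the injectivity of $c^{4,2}_{X,m}$ this cokernel equals the unramified group $H^0_{\Zar}(\mathbb{P}^2,\mathcal{H}^3(\Z/m(2)))$, a birational invariant equal to $H^3_{et}(k,\mu_m^{\otimes2})=0$ since $\cd k=2$.

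With these inputs the homomorphism $\eta_m$ of Proposition~\ref{keyprop}(1) runs from $\coker(c^{3,2}_{X,m})=0$ to $\coker(c^{3,2}_{C,m})=D(C)/m$, so $\ker(\eta_m)=0$ and $\coker(\eta_m)\cong D(C)/m$. The first exact sequence of that proposition then forces $\ker(c^{4,2}_{U,m})\hookrightarrow\ker(c^{4,2}_{X,m})=0$, giving the injectivity of $\psi_U/m$; the second exact sequence, together with the injectivity of the map $(**)$ (which holds because $c^{4,2}_{X,m}$ is injective), yields an exact sequence $0\to D(C)/m\to\coker(c^{4,2}_{U,m})\overset{\beta}{\to}\Z/m$.

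To read off $\coker(\psi_U/m)$ I would compare this with the sequence $0\to\coker(\psi_U/m)\to\coker(c^{4,2}_{U,m})\overset{\gamma}{\to}(\Pic(U)/m)^*\to0$ of Lemma~\ref{candpsi}. Naturality of Lemma~\ref{candpsi} in the open immersion $U\hookrightarrow X$ identifies $\beta$ with the composite of $\gamma$ and the map $(\Pic(U)/m)^*\to(\Pic(X)/m)^*$ dual to the restriction $\Pic(X)\to\Pic(U)$; since the latter is surjective its dual is injective, so $\ker(\beta)=\ker(\gamma)$ and hence $\coker(\psi_U/m)\cong D(C)/m$. This is the step where the integer $d$ enters through the geometry of $\mathbb{P}^2$: the curve components $D_1,\dots,D_s$ of $X\setminus U$, of degrees $d_1,\dots,d_s$, have classes generating $d\Z$ in $\Pic(\mathbb{P}^2)=\Z$ with $d=\gcd(d_i)$, so $\Pic(U)\cong\Z/d$ and, for $m=nd$, $(\Pic(U)/nd)^*\cong\Z/d$. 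Specializing to $m=nd$ produces the asserted sequence $0\to C_0(U)/nd\to\Br(U)^*/nd\to D(C)/nd\to0$.

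Finally, the two integral statements follow formally, exactly as in the curve case (cf.\ Theorems~\ref{thm-svh} and~\ref{shuji}): from $\ker(\psi_U/m)=0$ for all $m$ I get $\varprojlim_m\ker(\psi_U/m)=0$, so Lemma~\ref{easylemma} together with $(\Br(U)^*)_{\divi}=0$ gives $\ker(\psi_U)=C_0(U)_{\divi}=C_0(U)_{\Div}$ (the last equality by Lemma~\ref{divdiv}); and passing to the inverse limit over $m$ in $\coker(\psi_U/m)\cong D(C)/m$, using $D(C)=\bigoplus_i\hat{\Z}^{r(C_i)}$, gives $\Br(U)^*/\overline{\Image(\psi_U)}\cong D(C)$. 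The main obstacle I expect is twofold: proving $\coker(c^{3,2}_{X,m})=0$, i.e.\ the vanishing of the unramified $H^3$ of $\mathbb{P}^2$ over the $p$-adic field $k$; and the functoriality bookkeeping needed to identify $\beta$ with the dual Picard map, which is precisely what makes the apparent $d$-dependence collapse to the clean cokernel $D(C)/m$.
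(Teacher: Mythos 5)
Your proof is correct and follows essentially the same route as the paper: Lemma~\ref{candpsi} to translate $\psi_U$ into $c^{4,2}_{U,n}$, Proposition~\ref{keyprop}(1) combined with $\coker(c^{3,2}_{\mathbb{P}^2,n})=0$ (vanishing of unramified $H^3$ via Lemma~\ref{unram}) and $\coker(c^{3,2}_{C,n})\cong D(C)/n$ (Lemma~\ref{candrho} and Theorem~\ref{shuji}), and the computation $\Pic(U)\cong\Z/d$ to compare the cokernel sequences for $U$ and $X$. The only (harmless) differences are that you spell out the birational-invariance justification for the unramified vanishing, which the paper merely asserts, and that your bookkeeping yields $\coker(\psi_U/m)\cong D(C)/m$ for every $m$ rather than only for $m=nd$.
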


For the map $\rho_U : C_1(U) \to \pi_1^{ab}(U)$,
see Theorem \ref{rational1} below.

\begin{proof}
Let $n \in \Z_{>0}$.
Note that
$\psi_X/n : C_0(X)/n \to \Br(X)^*/n$ is bijective,
and that
$H^0_{\Zar}(X, \mathcal{H}^i(\Q/\Z(i-1)))=0$ for $i=3, 4$.
By Lemmas \ref{unram}, \ref{candpsi},
and Proposition \ref{keyprop},
we have $\ker(\psi_U/n) = 0$.
By Lemmas \ref{easylemma} and \ref{divdiv}, 
we get $\ker(\psi_U) = C_0(U)_{\Div}$.

Note that
$\Z = \Pic(X) \to \Pic(U)$ is a surjection with kernel $d\Z$.
By Lemma \ref{candpsi},
we get a commutative diagram with exact rows
\[ 
\begin{matrix}
0 \to  
& \coker(\psi_U/nd)&  \to 
& \coker(c^{4, 2}_{U, nd})&  \to  
& \Z/d\Z  & \to 0
\\
&  &
& \downarrow &
& \downarrow^{\text{inj.}} & &
\\
0 = 
& \coker(\psi_X/nd)&  \to 
& \coker(c^{4, 2}_{X, nd})&  \cong
& \Z/nd. &
\end{matrix}
\]
By Proposition \ref{keyprop} and Lemma \ref{unram},
we get an exact sequence
\[ 0  \to \coker(c_{C,nd}^{3, 2}) \to 
\coker(c_{U, nd}^{4, 2}) \to \coker(c_{X, nd}^{4, 2}).
\] 
It follows that
$\coker(\psi_U/nd) \cong \coker(c_{C,nd}^{3, 2})$,
which is isomorphic to $D(C)/nd$
by Lemma \ref{candrho} and Theorem \ref{shuji}.
We are done.
\end{proof}

\subsection{Rational surface}
Suppose that $\bar{X} := X \times_{\Spec k} \Spec \bar{k}$ 
is a {\it rational} surface over $\bar{k}$
(that is, $\bar{X}$ is birational to $\mathbb{P}^2_{\bar{k}}$),
and that $X(k) \not= \phi$.
In this situation, 
the Chow group $C_0(X)=CH_0(X)$
and related maps such as $\psi_X$ and $c^{4, 2,}_{X, n}$
have been studied by several authors
(see, for example, 
\cite{manin2, bloch, ctsansuc, ct, ct-survey, ct-survey2}).
We will briefly recall a few results among them.

We set $A_0(X) := \ker[C_0(X) = CH_0(X) \overset{\deg}{\to} \Z]$.
Let $S = \Hom(NS(\bar{X}), \bar{k}^*)$ 
be the N\'eron-Severi torus of $X$.
Recall that we have the Bloch map \cite{bloch}, \cite{ctsansuc}
\[  \phi_X : A_0(X) \to H^1_{\Gal}(k, S). \]
It is known \cite{ct} that $\phi_X$ is injective.
Note that $H^1_{\Gal}(k, S)$ is finite 
(hence so is $A_0(X)$).
We will also need the following result.

\begin{theorem}[Saito \cite{saito}, see also \cite{ct-survey}]\label{rat}
There is an $N=N_X \in \Z_{>0}$ 
such that 
$c_{X, nN}^{4, 2}$ is injective
for all $n \in \Z_{>0}$.
Moreover, we have $\ker(\psi_X)=0$.
\end{theorem}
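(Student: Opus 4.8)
The plan is to take the existence of the uniform bound $N$ (the injectivity of $c^{4,2}_{X,nN}$) as the arithmetic core of Saito's theorem and to derive $\ker(\psi_X)=0$ from it purely formally. First I would record the structure of $C_0(X)=\CH_0(X)$: since $X(k)\neq\emptyset$ the degree map $\CH_0(X)\xrightarrow{\deg}\Z$ is split surjective, and its kernel $A_0(X)$ is finite because the Bloch map $\phi_X\colon A_0(X)\to H^1_{\Gal}(k,S)$ is injective by \cite{ct} with finite target. Thus $C_0(X)\cong\Z\oplus A_0(X)$ has no nonzero divisible elements, so $C_0(X)_{\divi}=0$. Now Lemma \ref{candpsi} identifies $\ker(\psi_X/n)$ with $\ker(c^{4,2}_{X,n})$, so the first assertion gives $\ker(\psi_X/nN)=0$ for all $n$, whence also $\ker(\psi_X\otimes\Q/\Z)=\varinjlim_n\ker(\psi_X/nN)=0$. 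Applying Lemma \ref{easylemma}(2) with this $N$ yields $\varprojlim_n\ker(\psi_X/n)=0$, and feeding $C_0(X)_{\divi}=0$ together with this vanishing into the exact sequence of Lemma \ref{easylemma}(1) (whose hypothesis $(\Br(X)^*)_{\divi}=0$ holds for every smooth variety) gives $\ker(\psi_X)=0$.

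It remains to produce $N$. The key localization is that $\ker(c^{4,2}_{X,n})$ lies in the degree-zero part: the composite of $c^{4,2}_{X,n}$ with the trace map (proper pushforward to $\Spec k$) $H^4_{et}(X,\Z/n(2))\to H^0_{et}(\Spec k,\Z/n)=\Z/n$ is the degree map modulo $n$, which is an isomorphism on the $\Z/n$ summand of $\CH_0(X)/n=\Z/n\oplus(A_0(X)\otimes\Z/n)$ because $X$ has a rational point. Hence $\ker(c^{4,2}_{X,n})\subseteq A_0(X)\otimes\Z/n$, a finite group of exponent bounded independently of $n$. Injectivity of $c^{4,2}_{X,n}$ on this part is, via local Tate duality over $k$, governed by $\phi_X$: its injectivity (\cite{ct}, \cite{ctsansuc}) together with the finiteness of $H^1_{\Gal}(k,S)$ lets one take $N$ to be a multiple of the exponent of $H^1_{\Gal}(k,S)$, after which the mod-$nN$ cycle map is injective on $A_0(X)$ for every $n$. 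This is precisely the content of Saito's theorem (\cite{saito}; see also \cite{ct-survey}), proved there through arithmetic duality over $k$ and the Bloch--Ogus sequence of Lemma \ref{unram}(1).

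The main obstacle is the uniformity in $n$. Finiteness of $A_0(X)$ and injectivity of $\phi_X$ show only that each $\ker(c^{4,2}_{X,n})$ is finite of bounded exponent, not that it vanishes; passing from ``bounded'' to ``zero for all sufficiently divisible $n$'' requires one to control, uniformly in $n$, how the cycle map on $A_0(X)\otimes\Z/n$ sits inside $H^4_{et}(X,\Z/n(2))$ and to match it with $\phi_X$ under local duality. Isolating this comparison — equivalently, the finiteness of the unramified cohomology that enters Lemma \ref{unram}(1) — is the one genuinely nonformal input, and it is exactly what Saito supplies. Everything else, namely the reduction of the kernel to $A_0(X)$ and the deduction of $\ker(\psi_X)=0$, is a formal consequence of Lemmas \ref{candpsi}, \ref{easylemma}, \ref{unram} and the structure of $\CH_0$ of a rational surface.
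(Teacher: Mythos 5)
The paper does not prove Theorem \ref{rat} at all: it is imported verbatim from Saito \cite{saito} (see also \cite{ct-survey}) and used as a black box, so there is no internal argument to compare yours against. With that caveat, your formal derivation of the second assertion from the first is correct and uses exactly the paper's own toolkit: Lemma \ref{candpsi} identifies $\ker(\psi_X/n)$ with $\ker(c^{4,2}_{X,n})$; the hypothesis $\ker(c^{4,2}_{X,nN})=0$ for all $n$ gives $\ker(\psi_X\otimes\Q/\Z)=0$ over the cofinal system of multiples of $N$; Lemma \ref{easylemma}(2) then forces $\underset{\leftarrow}{\lim}\,\ker(\psi_X/n)=0$; and Lemma \ref{easylemma}(1), combined with $(\Br(X)^*)_{\divi}=0$ and with $C_0(X)_{\divi}=0$ (degree splitting via a rational point plus finiteness of $A_0(X)$ from the injectivity of $\phi_X$ into the finite group $H^1_{\Gal}(k,S)$), yields $\ker(\psi_X)=0$. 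This is the same mechanism the paper uses later to deduce Theorem \ref{rational}(1), so this half of your proposal is sound.

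For the first assertion, however, what you give is a reduction, not a proof. The localization of $\ker(c^{4,2}_{X,n})$ into $A_0(X)\otimes\Z/n$ via the trace map is fine, but the statement that one may take $N$ to be a multiple of the exponent of $H^1_{\Gal}(k,S)$ is not substantiated: injectivity of $\phi_X$ on the integral group $A_0(X)$ does not by itself control the kernel of the mod-$nN$ cycle map, and the uniform-in-$n$ comparison of $c^{4,2}_{X,n}$ restricted to $A_0(X)\otimes\Z/n$ with $\phi_X$ under local duality (equivalently, the finiteness of the unramified $\mathcal{H}^3$ term entering Lemma \ref{unram}(1), which the paper only establishes later in Lemma \ref{unram-galcoh} \emph{using} Theorem \ref{rat}) is precisely the nonformal content of Saito's theorem. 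You candidly defer this to \cite{saito}, which is consistent with how the paper itself treats the result; just be aware that, as written, the proposal proves only the implication from the first assertion to the second, while the first assertion remains a citation.
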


Now we state our main result.

\begin{theorem}\label{rational}
We suppose the following the condition:
\[ (*) 
\qquad
\text{the irreducible components of }~
\bar{X} \setminus \bar{U}
~\text{generate}~
NS(\bar{X}).
\]
Then we have the following.
\begin{enumerate}
\item
$\ker(\psi_U) = C_0(U)_{\Div}$.
\item
$F_0(U) := \ker(\psi_U \otimes \Q/\Z)$ is finite.
There is an $N \in \Z_{>0}$ such that for all $n \in \Z_{>0}$
the canonical map
$\ker(\psi_U/nN) \to F_0(U)$ is bijective.
\item
There is an injection
$F_0(U) \hookrightarrow \coker(\phi_X)$,
which is bijective if $D(C)=0$.
\end{enumerate}
\end{theorem}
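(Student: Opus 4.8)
The plan is to compute $\ker(\psi_U)$ through its mod-$n$ and $\Q/\Z$-reductions, reducing everything to the maps $c^{4,2}_{U,n}$ and then feeding the outcome into the formal machinery of Lemma \ref{easylemma}. By Lemma \ref{candpsi} (with $d=2$) one has $\ker(\psi_U/n)\cong\ker(c^{4,2}_{U,n})$ for every $n\in\Z_{>0}\cup\{\infty\}$, so in particular $F_0(U)\cong\ker(c^{4,2}_{U,\infty})$. The first task is to describe these kernels via Proposition \ref{keyprop}(1): since $\cd k=2$, after replacing $V\setminus U$ and $V$ by $C$ and $X$ one obtains an exact sequence $\ker(\eta_n)\to\ker(c^{4,2}_{U,n})\to\ker(c^{4,2}_{X,n})$ with $\eta_n\colon\coker(c^{3,2}_{X,n})\to\coker(c^{3,2}_{C,n})$. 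I would first verify that hypothesis $(*)$ forces the map $H^3_{et,c}(V,\Z/n(2))\to H^3_{et,c}(V\setminus U,\Z/n(2))$ to be injective, via the localization sequence for $U\subset V$ together with the interpretation of $(*)$ as surjectivity of the boundary cycle classes onto $NS(\bar X)/n$ and Poincar\'e duality; this makes the left-hand map in Proposition \ref{keyprop}(1) injective.

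Next I would isolate the contribution of $X$. By Theorem \ref{rat} there is $N_X$ with $c^{4,2}_{X,nN_X}$ injective for all $n$; passing to the colimit gives $\ker(c^{4,2}_{X,\infty})=0$, whence $F_0(U)\cong\ker(\eta_\infty)$, and at finite level divisible by $N_X$ one gets $\ker(c^{4,2}_{U,nN_X})\cong\ker(\eta_{nN_X})$. The two cokernels are then identified: $\coker(c^{3,2}_{C,n})\cong D(C)/n$ by Lemma \ref{candrho} and Theorem \ref{shuji}(2) applied componentwise to the curve $C$, while Lemma \ref{unram}(1) together with $\ker(c^{4,2}_{X,\infty})=0$ yields $\coker(c^{3,2}_{X,\infty})\cong H^0_{\Zar}(X,\mathcal H^3(\Q/\Z(2)))$, and Lemma \ref{unram}(3) shows the finite-level cokernels stabilize to this group once $n$ is divisible by its exponent. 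Since this unramified group is finite for a rational surface, $F_0(U)\cong\ker(\eta_\infty)$ is finite; choosing $N$ divisible by $N_X$ and by the exponent of $H^0_{\Zar}(X,\mathcal H^3(\Q/\Z(2)))$, and using that $D(C)\cong\hat\Z^{\,r}$ is torsion free (so that $D(C)/nN\hookrightarrow D(C)\otimes\Q/\Z$ compatibly), a finite-group argument shows $\ker(\eta_{nN})\to\ker(\eta_\infty)$ is bijective for all $n$. This gives part (2); then Lemma \ref{easylemma}(2) yields $\varprojlim_n\ker(\psi_U/n)=0$, and combined with Lemma \ref{easylemma}(1) (using $\Br(U)^*_{\divi}=0$) and Lemma \ref{divdiv} (so $C_0(U)_{\divi}=C_0(U)_{\Div}$) this proves part (1).

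For part (3) I would use the inclusion $F_0(U)\cong\ker(\eta_\infty)\hookrightarrow\coker(c^{3,2}_{X,\infty})\cong H^0_{\Zar}(X,\mathcal H^3(\Q/\Z(2)))$ furnished above, and invoke the identification of this unramified cohomology group with $\coker(\phi_X)$ valid for rational surfaces over a $p$-adic field (the structural input lying behind Theorem \ref{rat} in the theory of rational surfaces). The desired injection $F_0(U)\hookrightarrow\coker(\phi_X)$ results. When $D(C)=0$ one has $\coker(c^{3,2}_{C,\infty})=0$, so $\eta_\infty$ is the zero map and $\ker(\eta_\infty)=\coker(c^{3,2}_{X,\infty})$, making the injection an isomorphism.

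I expect the main obstacle to lie away from the formal diagram chases, in two places. First, pinning down exactly how hypothesis $(*)$ enters, namely deducing the injectivity of $H^3_{et,c}(V,\Z/n(2))\to H^3_{et,c}(V\setminus U,\Z/n(2))$ from the N\'eron--Severi generation condition; this requires translating $(*)$ into a statement about boundary cycle-class maps and dualizing it correctly. Second, and more seriously, the identification $H^0_{\Zar}(X,\mathcal H^3(\Q/\Z(2)))\cong\coker(\phi_X)$ for rational surfaces, which is what gives part (3) its content and is not among the results recalled earlier; everything else is the formal propagation of this input through the $\eta$-sequence of Proposition \ref{keyprop}, Theorem \ref{rat}, and Lemma \ref{easylemma}.
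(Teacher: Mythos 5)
Your proposal follows essentially the same route as the paper: reduce via Lemma \ref{candpsi} and Proposition \ref{keyprop}(1) to the map $\eta_n$, kill the $X$-contribution with Theorem \ref{rat}, identify the two cokernels via Lemma \ref{unram} and Theorem \ref{shuji}, stabilize at level $N=N_X\cdot\#H^0_{\Zar}(X,\mathcal H^3(\Q/\Z(2)))$, and finish with Lemmas \ref{easylemma} and \ref{divdiv}. The two points you flag as the real obstacles are precisely the two auxiliary lemmas the paper proves just before the theorem: the injectivity of $H^3_{et,c}(V,\Q/\Z(2))\to H^3_{et,c}(V\setminus U,\Q/\Z(2))$ under $(*)$ (by the Gysin/Hochschild--Serre argument you sketch), and the identification $H^0_{\Zar}(X,\mathcal H^3(\Q/\Z(2)))\cong\coker(\phi_X)$ (Lemma \ref{unram-galcoh}, proved via the Bloch map and \cite[Theorem C]{ct}).
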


In Example \ref{chatelet}, \ref{cubic} below,
we shall provide concrete examples 
for which $F_0(U) \not= 0$.
Theorem \ref{rational0} (1)
is a consequence of this theorem and examples.
Note that Parimala and Suresh \cite{ps} have constructed 
smooth projective surfaces $X$ and $X'$
such that
$\ker(\psi_X) \not= C_0(X)_{\Div}$ 
and
$\ker(\psi_{X'}/2) \not= 0$.
It seems difficult to control the cokernel of $\psi_U$, 
because, as Proposition \ref{ptwo} shows,
it can easily become bigger and bigger.
If we fix $X$ and vary $U$,
the above theorem immediately implies the following.

\begin{proposition}
There exists an open dense subvariety $U_0 \subset X$ such that, 
for any open dense subvariety $U \subset U_0$, the natural map
$F_0(U) \to F_0(U_0)$ is an isomorphism.
\end{proposition}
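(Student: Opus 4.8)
The plan is to exploit the finiteness and stabilization structure furnished by Theorem~\ref{rational}. The statement to be proved asserts the existence of an open dense $U_0 \subset X$ such that $F_0(U) \to F_0(U_0)$ is an isomorphism for every open dense $U \subset U_0$. First I would recall that Theorem~\ref{rational}(3) produces, under the hypothesis $(*)$, an injection $F_0(U) \hookrightarrow \coker(\phi_X)$, where $\coker(\phi_X)$ is a \emph{fixed} finite group depending only on $X$ (indeed $\phi_X : A_0(X) \to H^1_{\Gal}(k, S)$ has finite target by the remark following its definition). The key observation is that this bound is uniform: as $U$ shrinks, $F_0(U)$ always embeds into the same finite group, so $\card(F_0(U))$ is bounded above independently of $U$.

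The main step is then a monotonicity argument. I would show that shrinking $U$ can only enlarge $F_0(U)$, i.e.\ that for open dense $U' \subset U$ (both satisfying $(*)$, which is an open condition preserved under further shrinking once the components of $\bar X \setminus \bar U$ already generate $NS(\bar X)$) the natural restriction map $F_0(U) \to F_0(U')$ is injective. Concretely, $F_0(U) = \ker(\psi_U \otimes \Q/\Z)$ sits inside $C_0(U) \otimes \Q/\Z$, and the flat pullback $C_0(U) \to C_0(U')$ together with the compatibility of the maps $\psi_U$ with restriction (via the functoriality of the maps $c^{i,j}_{V,n}$ recorded in \S\ref{sect-et}) should identify $F_0(U)$ with a subgroup of $F_0(U')$. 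Granting this, $\card(F_0(U))$ is a nondecreasing function of the shrinking $U$, yet it is bounded by $\card(\coker(\phi_X)) < \infty$; hence it attains its maximum at some particular open dense $U_0$.

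Having fixed such a maximizing $U_0$, the conclusion follows formally: for any open dense $U \subset U_0$, injectivity of $F_0(U_0) \to F_0(U)$ combined with $\card(F_0(U)) \le \card(F_0(U_0))$ (by maximality of $U_0$) forces $F_0(U_0) \to F_0(U)$ to be a bijection. This is the isomorphism claimed. I would take care to orient the arrow correctly---the statement writes $F_0(U) \to F_0(U_0)$, so I would either check that the relevant natural map between $F_0(U)$ and $F_0(U_0)$ is the inverse of the restriction, or simply note that an injection between finite groups of equal cardinality is an isomorphism in either direction, which settles the matter regardless of orientation.

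The hard part will be establishing the monotonicity (injectivity of the restriction on the finite pieces $F_0$) rigorously and with the correct variance. One must verify that the maps $\psi_U$ are genuinely compatible under restriction to a smaller open set, which requires tracing through the definition of $\psi_U$ via $c^{2d,d}_{U,n}$ and Lemma~\ref{candpsi}, and that passing to the torsion-stable kernel $F_0(U) = \ker(\psi_U \otimes \Q/\Z)$ preserves injectivity. The stabilization integer $N$ from Theorem~\ref{rational}(2), which guarantees $\ker(\psi_U/nN) \cong F_0(U)$ for all $n$, may in principle depend on $U$; I would need to confirm that the embedding $F_0(U) \hookrightarrow \coker(\phi_X)$ is natural in $U$ so that the finite bound is truly uniform, and this naturality is the technical crux on which the whole counting argument rests.
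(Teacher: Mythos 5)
Your overall skeleton (a uniform finite bound from $\coker(\phi_X)$, monotonicity, and stabilization of a bounded monotone family) is the same as the paper's, and you correctly identify the crux: the naturality in $U$ of the injection $F_0(U)\hookrightarrow\coker(\phi_X)$ from Theorem \ref{rational}(3). The paper's proof is exactly this: taking $U_*$ satisfying $(*)$, every $U\subset U_*$ also satisfies $(*)$, the natural map $F_0(U)\to F_0(U_*)$ is injective because it is compatible with the embeddings into $\coker(\phi_X)$, and finiteness of $F_0(U_*)$ lets one choose $U_0$ with $F_0(U_0)=\bigcap_{U\subset U_*}F_0(U)$.

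The genuine problem in your write-up is the variance. You base the monotonicity on a ``flat pullback'' or ``restriction map'' $C_0(U)\to C_0(U')$ for $U'\subset U$. No such map exists in this theory: $C_0$ (equivalently $H_0^M(-,\Z(0))$) is \emph{covariantly} functorial, and a zero-cycle on $U$ need not be supported on $U'$, so there is no natural arrow from the larger open set to the smaller one. The natural map is the pushforward $C_0(U')\to C_0(U)$ for $U'\subset U$ --- which is also why the statement writes the map as $F_0(U)\to F_0(U_0)$ for $U\subset U_0$. Consequently the monotonicity runs opposite to what you claim: shrinking $U$ realizes $F_0(U)$ as a \emph{smaller} subgroup of the fixed finite group $F_0(U_*)$ (injectivity of $F_0(U)\to F_0(U_*)$ coming from Theorem \ref{rational}(3)), so one must take the minimum (the intersection of a decreasing family of subgroups of a finite group), not the maximum. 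Your closing remark that an injection between finite groups of equal cardinality is bijective in either direction does not rescue this, because the direction error infects the setup of the extremal argument itself (which subgroup relation holds, and whether a max or a min is attained), not merely the orientation of the final arrow. Once the variance is corrected, your argument becomes the paper's.
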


\begin{proof}
We take $U_* \subset X$ that satisfies $(*)$.
By Theorem \ref{rational} (3),
the map $F_0(U) \to F_0(U_*)$ is injective
for any $U \subset U_*$.
Since $F_0(U_*)$ is finite,
there exists $U_0 \subset U_*$ such that
\[ F_0(U_0) = \bigcap_{U \subset U_*} F_0(U). \]
The proposition holds with this $U_0$.
\end{proof}

As for the map $\rho_U$,
we prove the following result.
Theorem \ref{rational0} (2) follows from this.

\begin{theorem}\label{rational1}
The map 
$\rho_U/n : C_1(U)/n \to \pi_1^{ab}(U)/n$
is an isomorphism
for any $n \in \Z_{>0}$.
We have
$\ker(\rho_U) = C_1(U)_{\Div}$ and
$\pi_1^{ab}(U) = \overline{\Image(\rho_U)}$.
\end{theorem}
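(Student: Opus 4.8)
The plan is to follow the same strategy used for the Brauer--Manin pairing in Theorem~\ref{rational} and Proposition~\ref{ptwo}, now applied to the reciprocity map $\rho_U$. By Lemma~\ref{candrho}, for each $n \in \Z_{>0} \cup \{\infty\}$ we have $\ker(\rho_U/n) \cong \ker(c^{5,3}_{U,n})$ and $\coker(\rho_U/n) \cong \coker(c^{5,3}_{U,n})$, so the whole theorem reduces to controlling the kernel and cokernel of the comparison map $c^{5,3}_{U,n}$ on a rational surface. First I would invoke Proposition~\ref{keyprop}(2), which applies precisely because $k$ is a finite extension of $\Q_p$: it furnishes the exact sequence
\[
\coker(c^{4,3}_{X,n}) \to \ker(c^{5,3}_{U,n}) \to \ker(c^{5,3}_{X,n})
\]
together with the isomorphism $\coker(c^{5,3}_{U,n}) \cong \coker(c^{5,3}_{X,n})$. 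This immediately transfers both questions from the open surface $U$ to the smooth projective surface $X$.

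Next I would evaluate the three relevant groups on $X$ using the fact that $\bar X$ is rational. The key input is that for a rational surface the motivic homology group $H^M_1(\bar X, \Z(-1))$ and the relevant unramified cohomology sheaves are as simple as possible; concretely, the groups $H^0_{\Zar}(X, \mathcal{H}^4(\Q/\Z(3)))$ measuring the obstruction in Lemma~\ref{unram}(2) should vanish, so that $c^{5,3}_{X,n}$ is bijective and $\coker(c^{5,3}_{X,n}) = 0$. This would give $\coker(\rho_U/n) = 0$ for all $n$, hence $\pi_1^{ab}(U) = \overline{\Image(\rho_U)}$ after passing to the limit. For the kernel, I would show $\ker(c^{5,3}_{X,n}) = 0$ from the same bijectivity and that the contribution $\coker(c^{4,3}_{X,n})$ maps trivially (or is itself controlled by the rationality of $\bar X$, via $H_0^M(X,\Z(-1))$ and Poincar\'e duality), forcing $\ker(c^{5,3}_{U,n}) = 0$ and therefore $\rho_U/n$ injective for every finite $n$.

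Having established that $\rho_U/n$ is an isomorphism for all $n \in \Z_{>0}$, the statements about the integral map follow formally from the auxiliary lemmas. Since $\rho_U/n$ is injective for all $n$, the canonical map $\ker(\rho_U/nN) \to \ker(\rho_U \otimes \Q/\Z)$ is trivially bijective (both sides vanish), so Lemma~\ref{easylemma}(2) gives $\varprojlim_n \ker(\rho_U/n) = 0$; combined with Lemma~\ref{easylemma}(1) and the fact that $\pi_1^{ab}(U)_{\divi} = 0$, this yields $\ker(\rho_U) = C_1(U)_{\divi}$. Finally Lemma~\ref{divdiv}, applied with $d = 2$, identifies $C_1(U)_{\divi} = C_1(U)_{\Div}$, completing the identification of the kernel.

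The main obstacle will be verifying the vanishing of the unramified obstruction groups on the rational surface $X$, i.e.\ showing $H^0_{\Zar}(X, \mathcal{H}^4(\Q/\Z(3))) = 0$ and that $\coker(c^{4,3}_{X,n})$ does not contribute to $\ker(c^{5,3}_{U,n})$. Unlike the $\psi_U$ case---where the analogous surface-level obstruction $\coker(\phi_X)$ is genuinely nonzero and produces the finite group $F_0(U)$---here I expect the birational invariance of these higher unramified cohomology groups for rational surfaces to force exact vanishing, which is why $\rho_U/n$ is a clean isomorphism with no finite defect. Checking this rationality input carefully (via Bloch--Ogus theory and the structure of $\pi_1^{ab}$ of a rational surface over a $p$-adic field) is where the real work lies; the remaining diagram chases and limit arguments are routine given the lemmas already in place.
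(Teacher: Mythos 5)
Your reduction is exactly the paper's: Lemma \ref{candrho} plus Proposition \ref{keyprop}(2) transfer everything to the bijectivity of $c^{4,3}_{X,n}$ and $c^{5,3}_{X,n}$ on the projective surface $X$, and the passage from ``$\rho_U/n$ is an isomorphism for all $n$'' to the integral statements via Lemmas \ref{easylemma} and \ref{divdiv} is correct and is what the paper does. The gap is in the step you yourself flag as ``where the real work lies,'' and the route you propose for it does not work. You want to deduce $H^0_{\Zar}(X,\mathcal{H}^4(\Q/\Z(3)))=0$ from birational invariance of unramified cohomology for rational surfaces; but $X$ is only rational over $\bar k$, not over $k$, so birational invariance over $k$ gives nothing. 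The paper's own Lemma \ref{unram-galcoh} and Examples \ref{chatelet}, \ref{cubic} show that the directly analogous group $H^0_{\Zar}(X,\mathcal{H}^3(\Q/\Z(2)))\cong\coker(\phi_X)$ is \emph{nonzero} for Ch\^atelet and diagonal cubic surfaces, so ``geometric rationality forces unramified cohomology to vanish'' is false as a general principle here. Moreover, via Lemma \ref{unram}(2) the vanishing of $H^0_{\Zar}(X,\mathcal{H}^4(\Z/n(3)))$ only controls $\ker(c^{5,3}_{X,n})$ (and the injectivity of $c^{4,3}$, which already follows from Proposition \ref{bydimension}); the cokernel of $c^{5,3}_{X,n}$ is governed by $H^1_{\Zar}(X,\mathcal{H}^4(\Z/n(3)))$, which you do not address at all, and the surjectivity of $c^{4,3}_{X,n}$ is a separate issue needed to kill the first term of the exact sequence in Proposition \ref{keyprop}(2).

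The paper fills this gap differently: it computes $H^4_{et}(X,\Z/n(3))$ and $H^5_{et}(X,\Z/n(3))$ by the Hochschild--Serre spectral sequence, using $H^1(\bar X)=H^3(\bar X)=0$ and $H^2_{et}(\bar X,\Z/n(1))\cong NS(\bar X)\otimes\Z/n$ for a geometrically rational surface, then proves surjectivity of the resulting map onto $H^2_{\Gal}(k,NS(\bar X)\otimes\Z/n(2))$ by a corestriction/norm argument reducing to \emph{split} rational surfaces (where the unramified group does vanish), and separately shows $C_1(X)\to C_1(\Spec k)=k^*$ is an isomorphism by a norm argument plus the injectivity of $c^{4,3}_{X,\infty}$. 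That descent-to-the-split-case mechanism is the missing idea in your proposal; without it, the claimed vanishing on a general geometrically rational $X$ is unsupported.
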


Note that Sato \cite{sato2} has constructed 
smooth projective $K3$ surfaces $X, X'$
such that
$\ker(\rho_X) \not= C_1(X)_{\Div}$ 
and
$\ker(\rho_{X'}/2n) \not= 0$ for all $n \in \Z_{>0}$.

Before we start the proof of 
Theorems \ref{rational} and \ref{rational1},
we introduce a few lemmas.


\begin{lemma}\label{unram-galcoh}
We have a canonical isomorphism
\[ H^0_{\Zar}(X, \mathcal{H}^3(\Q/\Z(2))) 
\cong \coker[\phi_X : A_0(X) \to H^1_{\Gal}(k, S)].
\]
In particular, $H^0_{\Zar}(X, \mathcal{H}^3(\Q/\Z(2)))$ is finite.
\end{lemma}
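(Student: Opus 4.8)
The plan is to reinterpret the left-hand side as an unramified cohomology group and to compute it by feeding the geometry of $\bar{X}$ into Galois descent. By the Bloch--Ogus resolution of the Zariski sheaf $\mathcal{H}^3(\Q/\Z(2))$ (the same input used for Lemma \ref{unram}), its group of global sections is the unramified cohomology
\[
H^0_{\Zar}(X, \mathcal{H}^3(\Q/\Z(2))) = \ker\Big[ H^3_{et}(k(X), \Q/\Z(2)) \to \bigoplus_{x \in X^{(1)}} H^2_{et}(k(x), \Q/\Z(1)) \Big].
\]
First I would record the geometric input. Since $\bar{X}$ is rational one has $\CH_0(\bar{X}) = \Z$, so the Bloch--Srinivas decomposition of the diagonal forces the geometric unramified groups $H^0_{\Zar}(\bar{X}, \mathcal{H}^q(\Q/\Z(2)))$ to vanish for $q \geq 1$; combined with the Kummer sequence and $\Br(\bar{X}) = 0$, $\Pic(\bar{X}) = NS(\bar{X})$, this pins down $H^q_{et}(\bar{X}, \Q/\Z(2))$, which is nonzero only for $q = 0, 2, 4$, with values $\Q/\Z(2)$, $NS(\bar{X}) \otimes \Q/\Z(1)$ and $\Q/\Z$ respectively.

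Next I would descend from $\bar{X}$ to $X$. Inserting the above into the Hochschild--Serre spectral sequence $E_2^{p,q} = H^p_{\Gal}(k, H^q_{et}(\bar{X}, \Q/\Z(2))) \Rightarrow H^{p+q}_{et}(X, \Q/\Z(2))$ and using $\cd k = 2$ to annihilate all rows with $p > 2$, the only contribution to total degree $3$ is $E_2^{1,2} = H^1_{\Gal}(k, NS(\bar{X}) \otimes \Q/\Z(1))$. Running this in parallel with the coniveau spectral sequence and extracting the part killed by all residue maps, I expect a short exact sequence presenting $H^0_{\Zar}(X, \mathcal{H}^3(\Q/\Z(2)))$ as the quotient of $H^1_{\Gal}(k, NS(\bar{X}) \otimes \Q/\Z(1))$ by the image of a class map issuing from $A_0(X)$.

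To reach the stated form I would then reidentify this with data for the torus $S = \Hom(NS(\bar{X}), \bar{k}^*)$. Because $\bar{X}$ is rational, the intersection pairing on $NS(\bar{X})$ is a Galois-equivariant unimodular form, so $NS(\bar{X}) \cong NS(\bar{X})^{\vee}$ as Galois modules and $S[n] \cong NS(\bar{X}) \otimes \mu_n$; the Kummer sequence for $S$ then converts $H^1_{\Gal}(k, NS(\bar{X}) \otimes \Q/\Z(1))$ into the finite group $H^1_{\Gal}(k, S)$. Matching the class map out of $A_0(X)$ with the Bloch map $\phi_X$ yields the isomorphism with $\coker(\phi_X)$, and finiteness is immediate since $H^1_{\Gal}(k, S)$ is finite (already recorded above).

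The main obstacle will be the descent step: carrying out the comparison of the two spectral sequences so that the boundary map is identified precisely with $\phi_X$, and controlling the auxiliary $S(k) \otimes \Q/\Z$ terms produced by the Kummer sequence for $S$ so that they cancel and leave exactly $\coker(\phi_X)$ rather than a larger quotient.
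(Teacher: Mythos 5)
Your skeleton is essentially the paper's: both compute $H^0_{\Zar}(X,\mathcal{H}^3(\Q/\Z(2)))$ from the Bloch--Ogus sequence (Lemma \ref{unram}(1)) and compute $H^3_{et}(X,\Q/\Z(2))\cong H^1_{\Gal}(k,S_{\Tor})$ via Hochschild--Serre and the Kummer sequence for $S$, and you have correctly located the crux. But the step you defer to the last sentence --- making ``the auxiliary $S(k)\otimes\Q/\Z$ terms cancel'' --- is not a formal spectral-sequence manipulation; it is precisely where the paper invokes Colliot-Th\'el\`ene's Hilbert 90 for $K_2$ (\cite{ct}, Theorem C, using $X(k)\neq\emptyset$), which identifies the divisible subgroup $H^3_M(X,\Z(2))\otimes\Q/\Z$ of $H^3_M(X,\Q/\Z(2))$ (whose quotient is $CH_0(X)_{\Tor}=A_0(X)$ by \eqref{univcoeff0}) isomorphically with $S(k)\otimes\Q/\Z\subset H^1_{\Gal}(k,S_{\Tor})$. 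Without this theorem you only exhibit $\coker(c^{3,2}_{X,\infty})$ as the quotient of $H^1_{\Gal}(k,S_{\Tor})$ by the full image of $H^3_M(X,\Q/\Z(2))$, and you cannot conclude that this equals $\coker(\phi_X)$ rather than some further quotient of it; the snake-lemma diagram in the paper's proof is built exactly around this isomorphism. This is a substantive missing ingredient, not a bookkeeping issue.

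A secondary gap: the Bloch--Ogus sequence does not present $H^0_{\Zar}(X,\mathcal{H}^3(\Q/\Z(2)))$ outright as a quotient of $H^3_{et}(X,\Q/\Z(2))$; it continues with a map onto $\ker\bigl[C_0(X)\otimes\Q/\Z\to H^4_{et}(X,\Q/\Z(2))\bigr]$, which must be shown to vanish. The paper does this with Saito's theorem (Theorem \ref{rat}); with $\Q/\Z$-coefficients one could alternatively use the finiteness of $A_0(X)$ (itself a consequence of the injectivity of $\phi_X$) together with the degree map, but your write-up addresses neither. The remaining points --- the computation of $H^*_{et}(\bar{X},\Q/\Z(2))$ for a rational surface, the self-duality of $NS(\bar{X})$ identifying $S_{\Tor}$ with $NS(\bar{X})\otimes\Q/\Z(1)$, and the matching of the boundary map with $\phi_X$ (which is essentially the construction of the Bloch map) --- are fine.
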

\begin{proof}
By Kummer sequence,
we get an exact sequence for any $n \in \Z_{>0}$
\[ 0 \to S(k)/n \to H^1_{\Gal}(k, S[n]) \to H^1_{\Gal}(k, S)[n] \to 0. \]
Using the facts that $H^1_{\Gal}(k, S)$ is a torsion group,
and that Hochschild-Serre spectral sequence
induces an isomorphism
$H^3_{et}(X, \Q/\Z(2)) \cong H^1_{\Gal}(k, S_{\Tor})$,
we get an exact sequence which fits into the
lower row in the commutative diagram
(note that $CH_0(X)_{\Tor} = A_0(X)$)
\[
\begin{matrix}
0 \to 
& H^3_M(X, \Z(2)) \otimes \Q/\Z&  \to 
& H^3_M(X, \Q/\Z(2)) &  \to 
& CH_0(X)_{\Tor} &  \to 0
\\
& \downarrow^{\cong} &
& \downarrow^{c^{3, 2}_{X, \infty}} &
& \downarrow^{\phi_X} &
\\
0 \to 
& S(k) \otimes \Q/\Z &  \to 
& H^3_{et}(X, \Q/\Z(2)) &  \to 
& H^1_{\Gal}(k, S) &  \to 0.
\end{matrix}
\]
Here the upper row is exact by \eqref{univcoeff}.
The left vertical isomorphism is
given by \cite[Theorem C]{ct}.
(Here we used the assumption $X(k) \not= \phi)$.
Now the assertion follows from 
Theorem \ref{rat} and Lemma \ref{unram}.
\end{proof}

\begin{lemma}
If $(*)$ is satisfied, then
$H^3_{et, c}(V, \Q/\Z(2)) \to H^3_{et, c}(V \setminus U, \Q/\Z(2))$
is injective.
\end{lemma}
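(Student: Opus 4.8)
The plan is to strip off the non-properness and reduce the whole statement to a single pullback map between the projective surface $X$ and the projective curve $C$, and then to feed in the rationality of $\bar X$ together with $(*)$. First I would record that for every finite extension $\kappa$ of $k$ one has $H^i_{et}(\Spec\kappa,\Q/\Z(2))=0$ for all $i\geq 2$: since $\cd k=2$ the only candidate is $H^2$, and that is dual to $H^0(\kappa,\hat\Z(-1))=0$. Applying the localization sequence for compactly supported cohomology to the $0$-dimensional set $X\setminus V$ gives $H^3_{et,c}(V,\Q/\Z(2))\cong H^3_{et}(X,\Q/\Z(2))$, and applying it to the finite set $C\setminus(V\setminus U)$ gives $H^3_{et,c}(V\setminus U,\Q/\Z(2))\cong H^3_{et}(C,\Q/\Z(2))$. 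Under these identifications the map of the lemma becomes the pullback $f^*\colon H^3_{et}(X,\Q/\Z(2))\to H^3_{et}(C,\Q/\Z(2))$ along the normalization $f\colon C\to X$ of $X\setminus U$, so it suffices to prove that $f^*$ is injective.

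Next I would compute both groups with the Hochschild--Serre spectral sequence $H^p(k,H^q(\overline{(\,\cdot\,)},\Q/\Z(2)))\Rightarrow H^{p+q}$. Rationality of $\bar X$ gives $H^1(\bar X,\Q/\Z(2))=H^3(\bar X,\Q/\Z(2))=0$ and $H^2(\bar X,\Q/\Z(2))=NS(\bar X)\otimes\Q/\Z(1)$ (no transcendental part, trivial Brauer group), so the spectral sequence collapses to $H^3_{et}(X,\Q/\Z(2))\cong H^1(k,NS(\bar X)\otimes\Q/\Z(1))$; this is the group already met, through the N\'eron--Severi torus, in Lemma \ref{unram-galcoh}. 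On the curve side the leading graded piece of $H^3_{et}(C_i,\Q/\Z(2))$ is $H^1(k,H^2(\bar C_i,\Q/\Z(2)))=H^1(k,\Q/\Z(1))$. Since $f^*$ preserves the Hochschild--Serre filtrations, its leading term is $H^1(k,-)$ applied to the coefficient map $\iota\colon NS(\bar X)\otimes\Q/\Z(1)\to\bigoplus_i\Q/\Z(1)$, $\alpha\mapsto((\alpha\cdot C_i))_i$ given by intersection numbers.

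Here condition $(*)$ enters: it says the classes $[C_i]$ generate $NS(\bar X)$, and since $\bar X$ is rational the intersection form on $NS(\bar X)$ is unimodular, so the transpose $g\colon\bigoplus_i\Z\to NS(\bar X)$, $e_i\mapsto[C_i]$, is a surjection onto a free lattice and hence splits, while dually $NS(\bar X)\hookrightarrow\bigoplus_i\Z$ is a split injection with torsion-free cokernel; in particular $\iota$ is injective. It is most transparent to phrase the conclusion dually: by Poincar\'e--Tate duality over $k$, $f^*$ is injective if and only if the Gysin pushforward $f_*\colon H^1_{et}(C,\hat\Z)\to H^3_{et}(X,\hat\Z(1))=H^1(k,NS(\bar X)\otimes\hat\Z)$ is surjective, and $f_*$ is precisely $H^1(k,-)$ applied to the split surjection $g\otimes\hat\Z$ (the contribution of $H^0(k,H^1(\bar C_i,\hat\Z))$ being absent for weight reasons when $\bar C_i$ has trivial toric part, and otherwise mapping into the already–covered classes).

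The hard part will be exactly the passage from the surjection $g\otimes\hat\Z$ of coefficient modules to surjectivity of $H^1(k,g\otimes\hat\Z)$. Although $g$ splits as a map of abelian groups, the sequence $0\to\ker g\to\bigoplus_i\Z\to NS(\bar X)\to0$ of Galois lattices need not split $G_k$-equivariantly, so the delicate point is to show that the connecting homomorphism $H^1(k,NS(\bar X)\otimes\hat\Z)\to H^2(k,\ker g\otimes\hat\Z)$ vanishes; this is where the arithmetic of the residue fields of the components $C_i$, and in particular their ramification over $k$, really intervenes. I would attack it using the permutation-module structure of $\bigoplus_i\hat\Z$, so that Shapiro's lemma reduces its cohomology to that of the fields of definition of the Galois orbits of the $C_i$, combined with the vanishing of divisible subgroups isolated in Lemmas \ref{easylemma} and \ref{divdiv}; this is the step that carries the full weight of the hypothesis that $\bar X$ is rational.
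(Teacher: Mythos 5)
Your reformulation of the target as injectivity of the pullback $H^3_{et}(X,\Q/\Z(2))\to H^3_{et}(C,\Q/\Z(2))$ is legitimate (the two localization sequences you invoke do give the stated identifications, since $H^i(\kappa,\Q/\Z(2))=0$ for $i\geq 2$ over a $p$-adic field), but the argument then stalls exactly where you say it does, and that missing step is the entire content of the lemma. Having reduced everything to the coefficient map $\iota\colon NS(\bar X)\otimes\Q/\Z(1)\to\bigoplus_i\Q/\Z(1)$ (dually, to the surjection $g\colon\bigoplus_i\Z\to NS(\bar X)$), you still must show that $H^1_{\Gal}(k,g\otimes\hat\Z)$ is surjective, i.e.\ that the connecting map into $H^2_{\Gal}(k,\ker(g)\otimes\hat\Z)$ vanishes. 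This is not a routine verification: for a nonzero lattice $K$ over a $p$-adic field, $H^2_{\Gal}(k,K\otimes\hat\Z)$ is in general a nonzero finite group (already $H^2_{\Gal}(k,\hat\Z)$ is dual to $\mu(k)$), the extension $0\to\ker g\to\bigoplus_i\Z\to NS(\bar X)\to 0$ has no reason to split Galois-equivariantly, and neither Shapiro's lemma nor the divisibility lemmas you cite obviously kills the connecting map. There are also secondary imprecisions (the $E_\infty^{2,1}$ contribution $H^2_{\Gal}(k,H^1(\bar C_i,\Q/\Z(2)))$ to $H^3_{et}(C_i)$ for positive-genus components is waved away, and rationality of $\bar X$ is used only to identify $H^2(\bar X)$ with $NS(\bar X)$ and kill odd cohomology, not in the step you assign its ``full weight'' to), but the unproven surjectivity is the real gap.

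The paper's proof sidesteps this difficulty entirely by working one step to the left in the localization sequence: injectivity of $H^3_{et,c}(V,\Q/\Z(2))\to H^3_{et,c}(V\setminus U,\Q/\Z(2))$ is equivalent to the vanishing of $H^3_{et,c}(U,\Q/\Z(2))\to H^3_{et,c}(V,\Q/\Z(2))$, whose Poincar\'e dual is the restriction $H^3_{et}(V,\hat\Z(1))\to H^3_{et}(U,\hat\Z(1))$; after replacing $V$ by $X$ via the Gysin sequence, this is governed by the restriction of coefficient modules $NS(\bar X)\otimes\hat\Z=H^2_{et}(\bar X,\hat\Z(1))\to H^2_{et}(\bar U,\hat\Z(1))$, which is the \emph{zero} map under $(*)$ because the classes of the removed components die on the open complement and, by hypothesis, generate $NS(\bar X)$. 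In other words, where your route requires applying $H^1_{\Gal}(k,-)$ to an injection of lattices and proving the result is still injective (a genuinely delicate statement), the paper's route applies it to the zero map. If you want to salvage your approach, the cleanest fix is to prove the dual vanishing statement as the paper does rather than attack $H^1_{\Gal}(k,g\otimes\hat\Z)$ head-on.
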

\begin{proof}
It suffices to show that
$H^3_{et, c}(U, \Q/\Z(2)) \to H^3_{et, c}(V, \Q/\Z(2))$
is the zero map.
By Poincar\'e duality,
this amounts to showing that
$H^3_{et}(V, \hat{\Z}(1)) \to H^3_{et}(U, \hat{\Z}(1))$
is the zero-map.
By Gysin sequence
$H^3_{et}(X, \hat{\Z}(1)) \to H^3_{et}(V, \hat{\Z}(1)) 
\to H^0_{\et}(X \setminus V, \hat{\Z}(-1)) = 0,$
it suffices to show that
$H^3_{et}(X, \hat{\Z}(1)) \to H^3_{et}(U, \hat{\Z}(1))$
is the zero-map.
By Hochschild-Serre spectral sequence,
we have
$H^3_{et}(X, \hat{\Z}(1)) 
= H^1_{\Gal}(k, H^2_{et}(\bar{X}, \hat{\Z}(1)))$,
and we are reduced to showing that
$NS(\bar{X}) \otimes \hat{\Z} 
= H^2_{et}(\bar{X}, \hat{\Z}(1)) \to H^2_{et}(\bar{U}, \hat{\Z}(1))$
is the zero map,
but this follows from the assumption $(*)$.
\end{proof}

\noindent
{\it Proof of Theorem \ref{rational}.}
Let $N_1 = N_X \in \Z_{>0}$ be the natural number
appearing in Theorem \ref{rat},
and $N_2$ the order of
$H^0_{\Zar}(X, \mathcal{H}^3(\Q/\Z(2)))$.
Put $N = N_1 N_2$ and take any $n \in \Z_{>0}$.
By Theorem \ref{rat} and Lemma \ref{unram}, we have
$\coker(c^{3, 2}_{X, nN}) \cong H^0_{\Zar}(X, \mathcal{H}^3(\Z/nN(2)))$.
Hence we have a commutative diagram
which contains the map $\eta_n$ appearing in Proposition \ref{keyprop}:
\[
\begin{matrix}
H^0_{\Zar}(X, \mathcal{H}^3(\Z/nN(2))) &  \overset{\eta_{nN}}{\to} &
\coker(c_{C, nN}^{3, 2}) & \cong & 
(\Z/nN)^{\oplus r(C)}
\\
\Vert &   &
\downarrow &   & 
\cap
\\
H^0_{\Zar}(X, \mathcal{H}^3(\Q/\Z(2))) &  \overset{\eta_{\infty}}{\to} &
\coker(c_{C, \infty}^{3, 2}) & \cong & 
(\Q/\Z)^{\oplus r(C)},
\end{matrix}
\]
where the two right horizontal isomorphisms are given by
Theorem \ref{shuji}.
We get $\ker(\eta_{nN}) \cong \ker(\eta_{\infty})$.
Then by Theorem \ref{rat} and Proposition \ref{keyprop},
we have a commutative diagram,
\[ 
\begin{matrix}
\ker(\eta_{nN})  & \twoheadrightarrow &
\ker(\psi_U/nN)
\\
\downarrow^{\cong}  & & \downarrow
\\
\ker(\eta_{\infty})  & \cong  &
\ker(\psi_U \otimes \Q/\Z) =F_0(U),
\end{matrix}
\]
in which the top horizontal map is surjective.
Hence all the maps in this diagram are bijective.
This proves (2).
If $D(C)=0$, then we have
$\ker(\eta_{\infty}) = H^0_{\Zar}(X, \mathcal{H}^3(\Q/\Z(2)))$,
and Lemma \ref{unram-galcoh} proves (3).
(1) follows from Lemma \ref{easylemma}.
\qed

\noindent
{\it Proof of Theorem \ref{rational1}.}
By Proposition \ref{keyprop} and Lemma \ref{candpsi},
it suffices to show the bijectivity of
$c^{i, 3}_{X, n}$ for $i=4, 5$ and $n \in \Z_{>0}$.
Probably this is well-known to the specialists,
but the author could not find a suitable reference.
For the completeness sake,
we include a proof here.

Let $n \in \Z_{>0} \cup \{ \infty \}$.
The injectivity of 
$c^{4, 3}_{X, n}$ follows from Proposition \ref{bydimension}.
We consider the commutative diagram with exact rows
\begin{equation}\label{ratcfteq}
\begin{matrix}
0 \to 
H_0^M(X, \Z(-1))/n& \to 
&H_0^M(X, \Z/n(-1))& \to 
&C_1(X)[n] \to 0
\\
 \downarrow & 
& \downarrow^{c^{4, 3}_{X, n}} & 
& \downarrow 
\\
0 \to 
H^2_{\Gal}(k, NS(\bar{X}) \otimes \Z/n(2))& \to 
&H_{et}^4(X, \Z/n(3))& \to 
&H^0_{\Gal}(k, \Z/n(1)) \to 0.
\end{matrix}
\end{equation}
Here the upper and lower rows
are given by \eqref{univcoeff} and by
Hochschild-Serre spectral sequence respectively.
The right vertical map is given by
\[ C_1(X)[n] \to C_1(\Spec k)[n] = k^*[n] = H^0_{\Gal}(k, \Z/n(1)). \]
The left vertical map is defined by the commutativity
of this diagram.
The existence of a $k$-rational point of $X$
shows that both rows are split exact.
It also shows the surjectivity of the right vertical map.
We show the surjectivity of the left vertical map.
Since the corestriction map
\[ H^2_{\Gal}(k', NS(\bar{X}) \otimes \Z/n(2)) \to 
   H^2_{\Gal}(k, NS(\bar{X}) \otimes \Z/n(2))
\]
is surjective for any finite extension $k'/k$,
it suffices to show this surjectivity
when $X$ is a {\it split} rational surface
(that is, $k(X)$ is purely transcendental over $k$).
This follows from Lemma \ref{unram}
because $H^0_{\Zar}(X, \mathcal{H}^4(\Z/n(3)))=0$
when $X$ is a split rational surface.
(More explicitly, one can directly show the surjectivity
of the composite map
\[
\Pic(X) \otimes K_2 k \overset{\text{prod.}}{\to} H^4_M(X, \Z(3)) \cong 
H^M_0(X, \Z(-1)) \to 
H^2_{\Gal}(k, NS(\bar{X}) \otimes \Z/n(2)),
\]
where the first map is given by the product structure.)

Next, we consider $c^{5, 3}_{X, n}$.
By Hochschild-Serre spectral sequence,
we have an isomorphism
\[ 
H^5_{et}(X, \Z/n(3)) \cong H^1_{\Gal}(k, \Z/n(1)) \cong k^*/n.
\]
It suffices to show that
the structure morphism induces
an isomorphism $C_1(X) \to C_1(\Spec k) = k^*$.
By the existence of a $k$-rational point of $X$,
one knows that this map is split surjective.
We set $V(X) := \ker[C_1(X) \to C_1(\Spec k)=k^*]$.
Since $V(X')=0$ if $X'$ is a split rational surface,
the norm argument shows that $V(X)$ is a torsion group.
It suffices to show $V(X)_{\Tor}=0$.
This follows from a commutative diagram with exact rows
\[
\begin{matrix}
0 \to V(X)_{\Tor} \to & C_1(X)_{\Tor}& \overset{(!)}{\to} & \mu(k) \to 0
\\
& \uparrow^{\text{surj.}} & & \Vert
\\
& H_0^{M}(X, \Q/\Z(-1))& \overset{(!)}{\to} & H_0^M(\Spec k, \Q/\Z(1))
\\
& \downarrow^{c^{4, 3}_{X, \infty}} & & \Vert
\\
& H^4_{et}(X, \Q/\Z(3))& \cong & H^0_{\Gal}(k, \Q/\Z(1)).
\end{matrix}
\]
Here the maps $(!)$ are split surjective,
and $c^{4, 3}_{X, \infty}$ is injective by Proposition \ref{bydimension}.
The lower horizontal map is bijective
because the first term in the bottom row of \eqref{ratcfteq}
vanishes when $n=\infty$.
The middle upper vertical map is given by \eqref{univcoeff}.
\qed

\begin{example}\label{chatelet}
Let $a, b \in k^*$.
Suppose $p \not= 2$, $a \not= b$ and $\ord_k(a)=\ord_k(b)=r$.
We also take $d \in k^*$ such that $k(\sqrt{d})/k$
is a non-trivial unramified extension.
Let $X$ be a smooth projective surface with 
function field 
$K := k(x, y)[z]/(x^2 - dy^2 - z(z-a)(z-b))$
(a so-called {\it Ch\^atelet surface}).
Let $C$ be a divisor on $X$, 
and put $U=X \setminus C$.
We assume that the irreducible components of 
$\bar{C}$ generate $NS(\bar{X})$,
and that $D(C)=0$.
(When $X$ is minimal
with respect to the fibration corresponding 
to the field extension $K/k(z)$,
we can take $C$ to be the union of
the four singular fibers and the `$\infty$-section',
so that each component has genus zero. 
(Cf. Remark \ref{rem-genus-zero}.)
See \cite{ctsansuc2} for details.)
We claim the following.

\begin{enumerate}
\item
If $r$ is even and $\ord_k(a-b)=r$, then
$F_0(U) \cong \Z/2 \oplus \Z/2$.
\item
If $r$ is even and $\ord_k(a-b)>r$, then
$F_0(U) \cong \Z/2$.
\item
If $r$ is odd, then
$F_0(U) = 0$.
\end{enumerate}

It is shown in loc. cit. that
$H^1_{\Gal}(k, S) \cong  \Z/2 \oplus \Z/2$.
Hence the claim follows from 
Theorem \ref{rational} and the following theorem.

\begin{theorem}[Colliot-Th\'el\`ene, \cite{c-t}(4.7)]
\begin{enumerate}
\item
If $r$ is even and $\ord_k(a-b)=r$, then
$A_0(X) = 0$.
\item
If $r$ is even and $\ord_k(a-b)>r$, then
$A_0(X) \cong \Z/2$.
\item
If $r$ is odd, then
$A_0(X) \cong \Z/2 \oplus \Z/2$.
\end{enumerate}
\end{theorem}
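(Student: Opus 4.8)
The plan is to compute $A_0(X)$ as the image of the Bloch map $\phi_X$, which is injective by \cite{ct}. Since $H^1_{\Gal}(k,S) \cong \Z/2 \oplus \Z/2$ by \cite{ctsansuc2} (as recalled in Example \ref{chatelet}), the map $\phi_X : A_0(X) \to H^1_{\Gal}(k,S)$ realizes $A_0(X)$ as a subgroup of $(\Z/2)^{\oplus 2}$, so the whole problem reduces to identifying this subgroup in each of the three cases. First I would exploit the conic bundle structure $\pi : X \to \mathbb{P}^1$ given by projection to the $z$-coordinate: the generic fibre is the conic $x^2 - dy^2 = P(z)$ with $P(z) = z(z-a)(z-b)$, and it degenerates exactly over the four $k$-rational points $z \in \{0, a, b, \infty\}$. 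Because $k(\sqrt{d})/k$ is unramified, its local norm group is $N_{k(\sqrt{d})/k}(k(\sqrt{d})^*) = \{ c \in k^* : \ord_k(c) \text{ even}\}$, so the quaternion symbol $(d,c)_k \in \Z/2$ vanishes if and only if $\ord_k(c)$ is even; this elementary fact converts every cohomological obstruction into a parity-of-valuation computation.

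Next I would pass to the dual side. Under local Tate duality $H^1_{\Gal}(k,S)$ is dual to $H^1_{\Gal}(k,\Pic\bar{X}) \cong \Br(X)/\Br(k)$, and for this Ch\^atelet surface the latter is generated by two of the quaternion classes $(d,z)$, $(d,z-a)$, $(d,z-b)$, subject to the single relation $(d,z)+(d,z-a)+(d,z-b) = (d,P(z)) = 0$ on $X$ (valid because $P(z) = x^2 - dy^2$ is a norm from $k(\sqrt{d})(X)$). The image of $\phi_X$ is then the annihilator of the subgroup $B \subset \Br(X)/\Br(k)$ consisting of classes $\alpha$ whose local evaluation $P \mapsto \inv_{k(P)}\alpha|_P$ is constant on closed points, and dually $\coker(\phi_X) \cong B^*$. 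So the task becomes: determine $B$ by evaluating the two generating quaternion classes on a spanning set of degree-zero zero-cycles supported on smooth points of the various fibres. Via the residue/tame-symbol formalism these evaluations reduce to the symbols $(d,ab)_k$ at $z=0$, $(d,a(a-b))_k$ at $z=a$, $(d,b(b-a))_k$ at $z=b$, together with the contribution of the odd-degree fibre at $z=\infty$.

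The arithmetic then enters through the valuations $\ord_k(ab) = 2r$ (always even), $\ord_k(a(a-b)) = \ord_k(b(b-a)) = r + \ord_k(a-b)$, and the parity of $r$ governing the fibre at infinity. To organize the bookkeeping I would fix a regular model $\mathcal{X}$ over $\mathcal{O}_k$, realized as a conic bundle over $\mathbb{P}^1_{\mathcal{O}_k}$ after resolving the degenerate fibres; the combinatorics of its special fibre visibly encode the two case-distinctions. The parity of $r$ decides whether the components of the bad vertical fibres are defined over the residue field or only over its unramified quadratic extension, while the comparison of $\ord_k(a-b)$ with $r$ decides whether the sections $z=a$ and $z=b$ specialize to the same or to distinct components, that is, whether the two finite bad fibres collide in the reduction. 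Tracking these through the evaluation of $(d,z)$ and $(d,z-a)$ should yield $B = \Br(X)/\Br(k)$, hence $A_0(X)=0$, when $r$ is even and $\ord_k(a-b)=r$; $B \cong \Z/2$, hence $A_0(X)\cong\Z/2$, when $r$ is even and $\ord_k(a-b)>r$; and $B=0$, hence $A_0(X)\cong(\Z/2)^{\oplus 2}$, when $r$ is odd.

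The hardest part will be this final local computation, and in particular two delicate points. The first is handling the fibre at infinity correctly: the cubic $P$ must be homogenized to the degree-four form of a Ch\^atelet surface, and the odd degree makes the contribution at $\infty$ asymmetric with the finite fibres, which is precisely what forces $B=0$ in the case $r$ odd even when all finite symbols happen to vanish. The second is verifying that when $\ord_k(a-b)>r$ the collision of the $z=a$ and $z=b$ fibres removes exactly one independent functional rather than none or two. As a consistency check one can match the outcome against Lemma \ref{unram-galcoh}, which identifies $\coker(\phi_X)$ with $H^0_{\Zar}(X,\mathcal{H}^3(\Q/\Z(2)))$, and against the heuristic that $A_0(X)$ should vanish exactly when the conic bundle acquires enough rational sections to kill all of $\Br(X)/\Br(k)$.
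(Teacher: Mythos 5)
You should first be aware that the paper does not prove this statement at all: it is imported wholesale from the literature (Coray--Tsfasman \cite{c-t}, (4.7), a computation due to Colliot-Th\'el\`ene; see also Dalawat \cite{dalawat} for the ramified case), and is used in Example \ref{chatelet} only as a black box to be fed into Theorem \ref{rational}. So there is no proof in the paper to compare yours against; I can only assess your sketch on its own terms. Your overall strategy --- realize $A_0(X)$ inside $H^1_{\Gal}(k,S)\cong(\Z/2)^{\oplus 2}$ via the injective Bloch map, dualize to $\Br(X)/\Br(k)$ by Tate--Nakayama, and compute by evaluating the quaternion classes $(d,z)$, $(d,z-a)$ along the conic fibration --- is indeed the standard route and essentially the one carried out in the cited source, so the plan is sound.

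There are, however, two genuine gaps. First, the assertion that $\Image(\phi_X)$ is \emph{exactly} the annihilator of $B$ (equivalently $\coker(\phi_X)\cong B^*$) is not a formality. Injectivity of $\phi_X$ together with its compatibility with the Brauer--Manin pairing only gives an inclusion $\Image(\phi_X)\subset B^{\perp}$, which suffices for case (1) (where $B$ is everything) but gives only an upper bound $|A_0(X)|\leq 4/|B|$ in cases (2) and (3). To get equality you must produce degree-zero cycles whose images span $B^{\perp}$, i.e.\ run the evaluation computation in both directions; you cannot simply quote duality. Second, the local computation that is the entire content of the theorem is deferred (``should yield''), and as set up it would go wrong: closed points of the smooth loci of the \emph{degenerate} fibres have residue fields containing $k(\sqrt{d})$, so every symbol $(d,\cdot)$ vanishes there and those fibres detect nothing. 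The computation has to be done with $k$-points of nearby \emph{smooth} fibres, i.e.\ one determines the image of $X(k)\to(\Z/2)^{\oplus 2}$, $P\mapsto((d,z(P)),(d,z(P)-a))$, where the solvability condition $\ord_k\bigl(z_0(z_0-a)(z_0-b)\bigr)\equiv 0 \bmod 2$ for the fibre over $z_0$ to have a $k$-point is precisely what couples the parities of $r$ and $\ord_k(a-b)$ to the answer. Your symbols $(d,ab)_k$, $(d,a(a-b))_k$, $(d,b(b-a))_k$ do appear as the limiting values of this map near $z=0,a,b$, but the argument must be phrased this way (or via residues on a regular model) for the case distinctions to come out correctly, in particular for the fibre at infinity in the case $r$ odd.
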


One can extend this example to 
the case where $k(\sqrt{d})/k$ is a ramified extension,
by using a result of Dalawat \cite{dalawat}.
\end{example}

\begin{example}\label{cubic}
Let $a \in k^*$ and let $X$ be a cubic surface defined by
$T_0^3 + T_1^3 + T_2^3 + aT_3^3 = 0$ in $\mathbb{P}^3_k$.
Let $\zeta \in \bar{k}$ be a primitive cubic root of unity.
Set $r = \ord_k(a)$.
Let $C$ be a divisor on $X$, and put $U=X \setminus C$.
We assume that the irreducible components of 
$\bar{C}$ generate $NS(\bar{X})$,
and that $D(C)=0$.
(For example, we can take $C$ to be the union of the $27$ lines on $X$.
Cf. Remark \ref{rem-genus-zero})
We claim the following.

\begin{enumerate}
\item
If $p \not= 3$ and $r \equiv 0 \mod 3$, then
$F_0(U) \cong \Z/3 \oplus \Z/3$.
\item
If $p \not= 3, r \not\equiv 0 \mod 3$ and $\zeta \not\in k$, then
$F_0(U) \cong \Z/3$.
\item
Suppose $\zeta \in k$.
If $p \not= 3$, assume $r \not\equiv 0 \mod 3$.
If $p=3$, assume $r \equiv 1 \mod 3$.
Then
$F_0(X) = 0$.
\end{enumerate}

Manin \cite{manin2} has observed that
$H^1_{\Gal}(k, S) \cong  \Z/3 \oplus \Z/3$.
Hence the claim follows from 
Theorem \ref{rational} and the following theorem.

\begin{theorem}[Saito/Sato \cite{ss} (5.1.1)]
\begin{enumerate}
\item
If $p \not= 3$ and $r \equiv 0 \mod 3$, then
$A_0(X) = 0$.
\item
If $p \not= 3, r \not\equiv 0 \mod 3$ and $\zeta \not\in k$, then
$A_0(X) \cong \Z/3$.
\item
Suppose $\zeta \in k$.
If $p \not= 3$ assume $r \not\equiv 0 \mod 3$.
If $p=3$, assume $r \equiv 1 \mod 3$.
Then
$A_0(X) \cong \Z/3 \oplus \Z/3$.
\end{enumerate}
\end{theorem}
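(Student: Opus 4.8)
The plan is to realize $A_0(X)$ as a subgroup of the finite group $H^1_{\Gal}(k,S)$ and then pin down, regime by regime, which subgroup occurs. First I would record that the diagonal cubic $X$ is geometrically rational and carries the evident $k$-rational point $(1:-1:0:0)$, so that the Bloch map $\phi_X:A_0(X)\to H^1_{\Gal}(k,S)$ is defined and, by \cite{ct}, injective; since $H^1_{\Gal}(k,S)\cong\Z/3\oplus\Z/3$ by Manin's computation \cite{manin2}, the whole problem reduces to identifying the image of $\phi_X$ inside $(\Z/3)^{\oplus 2}$. In particular $A_0(X)$ is a $3$-group, and because the Galois group acts on the $27$ lines through $\Gal(k(\zeta,a^{1/3})/k)$, the three regimes $r\equiv 0$, then $r\not\equiv 0$ with $\zeta\notin k$, then $\zeta\in k$, are exactly the three ways this splitting field can sit over $k$ when $p\neq 3$; the $p=3$ situation is the wildly ramified analogue.

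Next I would treat the good-reduction case (1). When $p\neq 3$ and $3\mid r$, replacing $T_3$ by $\pi^{r/3}T_3$ for a uniformizer $\pi$ of $k$ I may assume $a\in O_k^{\times}$, so that $\operatorname{Proj}O_k[T_0,\dots,T_3]/(T_0^3+T_1^3+T_2^3+aT_3^3)$ is smooth over $O_k$ (here $p\neq 3$ and $\bar a\neq 0$ make the reduction a smooth cubic). Its special fibre $X_0$ is a geometrically rational surface over the finite residue field, hence $\bar X_0$ is simply connected and unramified class field theory \cite{ks} gives $A_0(X_0)=0$. Since $A_0(X)$ is $3$-torsion and $3$ is prime to $p$, the specialization map $A_0(X)\to A_0(X_0)$ is injective on this $3$-primary part by the standard good-reduction argument, and I conclude $A_0(X)=0$.

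The remaining cases are the bad-reduction ones, and here the work is to determine the image of $\phi_X$ explicitly. I would proceed in two complementary ways. The cohomological route studies the Brauer--Manin pairing $A_0(X)\times(\Br(X)/\Br(k))\to\Q/\Z$: because $\Br(\bar X)=0$ and $X(k)\neq\emptyset$, the algebraic Brauer group $\Br(X)/\Br(k)\cong H^1_{\Gal}(k,\Pic(\bar X))$ is generated by explicit cyclic degree-$3$ classes built from the line coordinates and $\zeta$, paired against $H^1_{\Gal}(k,S)$ by local duality, and the image of $\phi_X$ is cut out by which of these classes ramifies. The geometric route builds a regular model whose split special fibre comes from the degeneration governed by $aT_3^3=-(T_0^3+T_1^3+T_2^3)$ and reads off $A_0$ from the dual graph of components. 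In both approaches the discriminating invariant is whether $\zeta\in k$: when $\zeta\notin k$ the nontrivial element of $\Gal(k(\zeta)/k)$ acts by inversion on the $\mu_3$-coefficients and collapses one $\Z/3$ factor, leaving $A_0(X)\cong\Z/3$ (case 2), whereas when $\zeta\in k$ all the cyclic classes are defined over $k$ and both factors survive, giving $A_0(X)\cong\Z/3\oplus\Z/3$ (case 3); the $p=3$, $r\equiv 1\ (\mathrm{mod}\ 3)$ subcase I would verify by the same symbol computation carried out with the wild Hilbert symbol.

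The main obstacle is precisely this last step: pinning down the exact image of the injective map $\phi_X$ in $(\Z/3)^{\oplus 2}$ under bad reduction. The delicate points are computing the local invariants of the cyclic Brauer classes — equivalently the monodromy of the degenerate model — with full attention to the value of $r\bmod 3$ and to the ramification of $k(\zeta)/k$, and handling the wild prime $p=3$, where the tame-symbol computations must be replaced by explicit cube-power residue (Hilbert-symbol) calculations. This is where the genuine arithmetic of the problem, and the asymmetry between cases (2) and (3), resides.
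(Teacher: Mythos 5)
The paper does not contain a proof of this statement: it is quoted from Saito--Sato \cite{ss}, Theorem (5.1.1), and used as a black box in Example \ref{cubic}, so there is no internal argument to compare yours against. Your framework---$X(k)\neq\emptyset$, the Bloch map $\phi_X\colon A_0(X)\hookrightarrow H^1_{\Gal}(k,S)$ injective by \cite{ct}, $H^1_{\Gal}(k,S)\cong\Z/3\oplus\Z/3$ by \cite{manin2}, so that the problem becomes identifying the image of $\phi_X$ inside $(\Z/3)^{\oplus 2}$---is the correct and standard setting, and it is consistent with how the present paper itself manipulates $\phi_X$ (Lemma \ref{unram-galcoh}, Theorem \ref{rational}).

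As a proof, however, the proposal has a genuine gap, and you name it yourself: in cases (2) and (3) you never compute the image of $\phi_X$. You describe two strategies (local invariants of cyclic Brauer classes via local duality, or a regular model and its special fibre) and then state that carrying either out is ``where the genuine arithmetic of the problem resides.'' That computation---keeping track of $r\bmod 3$, of whether $k(\zeta)/k$ is trivial or nontrivial, and, for $p=3$, of wild cubic Hilbert symbols on an explicit regular model---\emph{is} the content of Saito--Sato's theorem; without it you have only shown that $A_0(X)$ is a subgroup of $(\Z/3)^{\oplus 2}$. Note also that your guiding heuristic (``$\zeta\notin k$ collapses one factor, $\zeta\in k$ preserves both'') cannot be the whole mechanism, since in case (1) the group vanishes entirely whether or not $\zeta\in k$: the answer depends essentially on $r\bmod 3$ and not only on the splitting behaviour of the $27$ lines. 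Finally, in case (1) the assertion that specialization is ``injective on the $3$-primary part by the standard good-reduction argument'' is itself a nontrivial theorem for zero-cycles on surfaces---it rests on the injectivity of the cycle class map on torsion (Saito's theorem, i.e.\ Theorem \ref{rat} here) or on Colliot-Th\'el\`ene's $K_2$-theoretic proof that $A_0(X)=0$ for rational surfaces with good reduction---and should be cited rather than treated as folklore. The outline is sound, but the decisive computations are missing.
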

\end{example}

%
%
%
%
%
%
%
%
%
%

\end{document}